\newtheorem{theorem}{{Theorem}}[section]
\newtheorem{proposition}[theorem]{{Proposition}}
\newtheorem{definition}[theorem]{{Definition}}
\newtheorem{lemma}[theorem]{{Lemma}}
\newtheorem{corollary}[theorem]{{Corollary}}
\newtheorem{fact}[theorem]{{Fact}}
\newtheorem{remark}[theorem]{{Remark}}
\newtheorem{question}[theorem]{{Question}}
\newtheorem{observation}[theorem]{{Observation}}
\newtheorem{comment}[theorem]{{Comment}}
\newtheorem{problem}[theorem]{{Problem}}
\newtheorem{example}[theorem]{{Example}}
\definecolor{greenbf}{rgb}{0.2, 0.8 ,0.4}
\def\R{\mathbb{R}}
\def\H{\mathbb{H}}
\def\Z{\mathbb{Z}}
\def\S{\mathbb{S}}
\def\T{\mathbb{T}}
\def\F{\mathcal{F}}
\def\Isom{{\sf{Isom}}}
\def\Aff{\sf{Aff}}
\def\SOL{{\sf{SOL}}}
\def\SOL{{\mathsf{Sol}} }
\def\heis{{\mathfrak{heis}} }
\def\sol{{\mathfrak{sol}} }
\def\aff{{\mathfrak{aff}} }
\def\Heis{{\sf{Heis}}}
\def\PSL{{\sf{PSL}}}
\def\O{{\sf{O}}}
\def\Span{{\sf{span}} }
\def\Mink{{\sf{Mink}} }
\def\AdS{{\sf{AdS}} }
\def\g{{\mathfrak{g}}}
\def\so{{\mathfrak{so}}}
\def\sl{{\mathfrak{sl}}}
\def\z{{\mathfrak{z}}}
\def\z{{\mathfrak{z}}}
\newcommand{\ad}{\mathrm{ad}}
\def\Heis{{\mathsf{Heis}}}
\def\Poi{{\mathsf{Poi}}}
\def\Sol{{\mathsf{Sol}}}
\def\L{{\sf{L}}}
\def\u{{\sf{u}}}
\def\G{{\widehat{G}}}
\def\Dev{{\mathsf{Dev}}}
\def\Osc{{\mathsf{Osc}}}
\def\osc{{\mathfrak{osc}}}
\begin{document}
\title[]{On completeness of Foliated structures, \\and Null Killing fields}

\author [M. Hanounah]{Malek Hanounah}

\author [L. Mehidi]{Lilia Mehidi}

\date{\today}
\maketitle
\begin{center}
\noindent Malek Hanounah\\
Institut f\"ur Mathematik und Informatik \\
Greifswald University, Germany\\
Email: malek.hanounah@uni-greifswald.de
\vspace{0.3cm}

\noindent Lilia Mehidi\\
Departamento de Geometria y Topologia\\
Facultad de Ciencias, Universidad de Granada, Spain\\
Email: lilia.mehidi@ugr.es
\end{center}

\begin{center}
\begin{abstract}
We consider a compact manifold $(M,\mathfrak{F})$ with a foliation $\mathfrak{F}$, and a smooth affine connection $\nabla$ on the tangent bundle of the foliation $T\mathfrak{F}$. 
We introduce and study a foliated completeness problem. Namely,  under which conditions on $\nabla$ the leaves are complete? We consider different natural geometric settings: the first one is the case of a totally geodesic lightlike foliation of a compact Lorentzian manifold, and the second one is the case where the leaves have particular affine structures. 
In the first case, we characterize the completeness, and obtain in particular that if a compact Lorentzian manifold admits a null Killing field $V$ such that the distribution orthogonal to $V$ is integrable, then it defines a (totally geodesic) foliation with complete leaves.
In the second case, we give a completeness result for a specific affine structure called ``the unimodular affine lightlike geometry'', and characterize the completeness for a natural relaxation of the geometry. 
On the other hand, we study the global completeness of a compact Lorentzian manifold in the presence of a null Killing field. We give two non-complete examples, starting from dimension $3$: one is a locally homogeneous manifold, and the other is a $3$D example where the Killing field dynamics is equicontinuous.
\medskip

\noindent\textbf{Mathematics Subject Classification:} 53C12, 53C50, 53C22. 
\end{abstract}
\end{center}

\addtocontents{toc}{\protect\setcounter{tocdepth}{0}}
\section*{Statements and Declarations}
\paragraph{\textbf{Competing interests:}} The authors have no competing interests to declare that are relevant to the content of this article. \\\textbf{Data availibility} Data sharing is not applicable to this article as no new data were created or analyzed in this study. \tableofcontents
\addtocontents{toc}{\protect\setcounter{tocdepth}{1}}

\section{Introduction}
Let $(M, \nabla)$ be a (connected) compact smooth manifold equipped with a smooth affine connection on the tangent bundle $TM$. A natural question is: under what conditions on the pair $(M, \nabla)$ is the manifold geodesically complete, meaning every geodesic is defined on $\mathbb{R}$? When $\nabla$ is the Levi-Civita connection of a smooth Riemannian metric $g$ on $M$, such a couple is geodesically complete. However, for non-Riemannian metrics, for instance in Lorentzian signature, this statement does not hold in general, making the study more rich. This leads to exploring sufficient conditions, such as curvature constraints, or the existence of Killing (or parallel) vector fields, for a compact Lorentzian manifold to be geodesically complete \cite{carriere1989autour, romero1995completeness, klingler1996completude, leistner2016completeness, mehidi2022completeness}.

\subsection{Leafwise completeness.} 
In various settings, one may naturally ask if some classical results extend to the context of foliations, i.e. whether they have a `foliated' counterpart. A well-known example  is the uniformization problem of $2$-dimensional foliations \cite{ghys_uniformisation} (or more generally, laminations), which asks whether a Riemannian metric
$g$ defined on the tangent bundle of a codimension $1$ foliation (or lamination) of a compact manifold of dimension $3$, can be made conformal to a metric $h$ with constant curvature along the leaves. Another example  asks whether leafwise harmonic functions for foliations on compact manifolds have the same behavior as harmonic functions on compact manifolds \cite{feres2005dynamics}.
In this paper, we introduce foliated versions of  completeness problems. Namely, we consider a triple $(M,\mathfrak{F}, \nabla_\mathfrak{F})$, where $M$ is a compact manifold, $\mathfrak{F}$ a foliation of $M$, and $\nabla_{\mathfrak{F}}$ a smooth affine connection defined on $T\mathfrak{F}$. The completeness problem above can be seen as the codimension $0$ case in this setting. We formulate the problem as follows 
\begin{problem}\label{Problem: FC}
    Let $(M,\mathfrak{F}, \nabla_\mathfrak{F})$ be a triple as above. Are there natural (geometric) settings in which the leaves of $\mathfrak{F}$ are geodesically complete?
\end{problem}

Of course, when the connection on the bundle $T \mathfrak{F}$ is induced by the Levi-Civita connection of a Riemannian metric on the manifold, the leaves are complete. We refer to this as a `Riemannian situation': this is an elementary situation from the point of view of the completeness problem. Therefore, we are interested in non-Riemannian situations. In the following question, we consider  a natural and interesting non-Riemannian situation: 
\begin{question}[\textbf{Foliated lightlike geometry: leafwise completeness}]\label{Question: FC with degenerate Riem metric}
 Let $(M,\mathfrak{F}, \nabla_\mathfrak{F})$ be a triple as defined above. Assume that we have a degenerate Riemannian metric on the tangent bundle $T\mathfrak{F}$, such that on each leaf, the induced degenerate Riemannian metric is parallel with respect to the given connection. Assume further that the radical (kernel) of the metric has dimension $1$, and denote by $\mathcal{N}$ the $1$-dimensional foliation it defines. Observe that the leaves of $\mathcal{N}$ are totally geodesic (indeed, the degenerate Riemannian metric on each leaf is parallel, so its radical is invariant by parallel transport).
 Is it true that the leaves are complete if and only if the geodesics tangent to $\mathcal{N}$ are complete?
\end{question}
We do not give an answer to Question \ref{Question: FC with degenerate Riem metric} in  full generality, but this question contains many natural situations, which we study in this paper. One of them is that of a totally geodesic lightlike (i.e. the metric induced on each leaf is Riemannian degenerate) foliation $\mathfrak{F}$  of a compact Lorentzian manifold. In this case, the connection on the tangent bundle $T\mathfrak{F}$ is the restriction to $T\mathfrak{F}$ of the Levi-Civita connection of the Lorentzian metric.  
Another natural and important situation is when $\nabla$ is a flat connection. In this case, the leaves have some specific affine structure. 
The latter case actually belongs to a more general situation arising from Problem \ref{Problem: FC}: that of a foliation with a $(G,X)$-structure on the leaves. We introduce the notion of a $(G,X)$-foliation in Definition \ref{Defi: GX foliation}, and we focus on certain affine structures with interesting application in the study of completeness in Lorentzian (or semi-Riemannian) geometry. In this case, the geodesic completeness is equivalent to the completeness of the $(G,X)$-structure when the model is complete. We refer to Subsection \ref{Intro-Subsection: (G,X)-foliated} of the introduction for the treatment of the problem in this setting. \\

To summarize, we consider the two following situations  
\begin{itemize}
    \item[\textbf{(A)}] A compact Lorentzian manifold with a totally geodesic  lightlike  foliation. This solves Question \ref{Question: FC with degenerate Riem metric} when the connection is induced by an ambient Lorentzian Levi-Civita connection. 
    \item[\textbf{(B)}]  A foliated compact manifold where the leaves have an affine structure  in the sense of $(G,X)$-structures (see Definition \ref{Defi: GX foliation}). Observe that here, there is no assumption on existence of an ambient connection. The assumptions in Question \ref{Question: FC with degenerate Riem metric}, combined with the situation in Item \textbf{(B)}, give rise to the so-called `affine lightlike geometry' (see Subsection \ref{Intro-Subsection: (G,X)-foliated}) on the leaves. We characterize the completeness in this case, providing an answer to Question \ref{Question: FC with degenerate Riem metric} when $\nabla$ is a flat connection. Finally, we present other natural affine structures where geodesic completeness may fail. 
\end{itemize}
For Item \textbf{(A)}, we prove the following
\begin{theorem}\label{Introduction: theorem tot geod lightlike foliation completeness}
    Let $M$ be a  compact Lorentzian manifold. Let $\mathfrak{F}$ be a codimension $1$ totally geodesic lightlike foliation of $M$. Then the leaves of $\mathfrak{F}$ are geodesically complete with respect to the induced connection if and only if the null geodesics in the leaves of $\mathfrak{F}$ are complete.
\end{theorem}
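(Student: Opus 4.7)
The direction ``leaves complete $\Rightarrow$ null geodesics complete'' is trivial, so I focus on the converse. Let $\gamma : I \to L$ be an inextendible geodesic of a leaf $L$. Since the induced connection preserves the degenerate metric $g_L$, the quantity $g_L(\dot\gamma, \dot\gamma) = c$ is constant along $\gamma$. The radical of $g_L$ is the $1$-dimensional null line field $\mathcal{N} \subset T\mathfrak{F}$. If $c = 0$, then $\dot\gamma$ is everywhere tangent to $\mathcal{N}$, so $\gamma$ is a null geodesic in $L$ and complete by hypothesis. Thus I assume $c > 0$.

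\emph{Structural preliminaries.} Let $V$ be a local null section of $\mathcal{N}$. Standard Koszul-type computations, using that $\mathfrak{F}$ is totally geodesic ($\nabla_X Y \in T\mathfrak{F}$ for $X, Y \in T\mathfrak{F}$) and that $\mathcal{N} = T\mathfrak{F}^{\perp}$ in the lightlike case, yield: (i) $\mathcal{N}$ is a parallel sub-bundle of $T\mathfrak{F}$, so $\nabla_X V = \mu(X)\,V$ for every $X \in T\mathfrak{F}$ and some $1$-form $\mu$; and (ii) the flow of $V$ preserves the induced degenerate metric, $\mathcal{L}_V(g_L) = 0$. The completeness of null geodesics then enters as the crucial extra input: it allows the choice of $V$ so that its integral curves are precisely the affinely parametrized null geodesics of $L$, i.e., $\nabla_V V = 0$ (passing to the universal cover of $L$ if needed to avoid monodromy obstructions).

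\emph{Reduction to a linear ODE.} Combining the affine choice of $V$ with (ii) provides Walker-type coordinates $(s, y^1, \ldots, y^{n-2})$ on $L$ with $V = \partial_s$ and $g_L = h_{ij}(y)\, dy^i\, dy^j$. A direct computation of the Christoffels of the induced connection in these coordinates shows that the geodesic equation for $\gamma(t) = (s(t), y(t))$ decouples as
\[
\ddot y^k + \Gamma^k_{ij}(y)\,\dot y^i \dot y^j \;=\; 0, \qquad \ddot s + 2\mu_i(s,y)\,\dot s\, \dot y^i + \Gamma^s_{ij}(s,y)\,\dot y^i \dot y^j \;=\; 0,
\]
where the $\Gamma^k_{ij}$ are the Christoffels of the Riemannian metric $h$, and $\mu_i, \Gamma^s_{ij}$ are smooth on the chart. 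The would-be Riccati term $\Gamma^s_{ss}\,\dot s^2$ has been killed by the condition $\nabla_V V = 0$, leaving the second equation \emph{linear} in $\dot s$.

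\emph{Conclusion by no-blow-up.} Suppose $I = [0, T)$ with $T < \infty$; by compactness of $M$ I extract $t_n \to T$ with $\gamma(t_n) \to q \in M$, and work in Walker coordinates around $q$. The first equation above is the Riemannian geodesic equation with constant speed $|\dot y|_h^2 = c$, so $y(t)$ and $\dot y(t)$ stay bounded on $[0, T)$. The linear $\dot s$-equation with coefficients bounded on the chart then yields a Gronwall bound on $|\dot s(t)|$. Hence $\dot\gamma(t_n)$ is bounded in $TM$, a subsequence converges, and the local existence theorem for ODEs extends $\gamma$ past $T$, contradicting maximality. The main obstacle in this plan is precisely the passage from the null-completeness hypothesis to the (global) existence of an affine null field $V$ with $\nabla_V V = 0$: this trivialisation of the affine parametrization is what removes the Riccati quadratic term, which is the only a priori source of finite-time blow-up of the spacelike geodesic velocity on the compact manifold $M$.
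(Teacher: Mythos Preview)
Your structural setup is correct and matches the paper's: the totally geodesic hypothesis gives $\nabla_X V\in\mathcal N$ for $X\in T\mathfrak F$, the flow of $V$ preserves $g_L$, and in adapted coordinates $(v,x)$ the geodesic equations decouple into a Riemannian equation for $x$ (with Christoffels of $h_{ij}(x)$, independent of $v$) and a $\ddot v$-equation with no quadratic $\dot v^2$ term. The paper does exactly this (Section~5.1), with $s\leftrightarrow v$ and $y\leftrightarrow x$.

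The gap is in your concluding paragraph. You extract a subsequential limit $\gamma(t_n)\to q$, pass to Walker coordinates around $q$, and then assert that ``$y(t)$ and $\dot y(t)$ stay bounded on $[0,T)$'' and that Gronwall controls $|\dot s(t)|$. But these quantities are only defined while $\gamma(t)$ lies in the chart, and you have not shown it does: even if $y(t)$ stays in the ball (which it will for $t$ close to $T$ by the speed bound), $s(t)$ can exit the $s$-interval of the chart. Once that happens the equations are no longer valid; switching charts loses the initial condition for the Gronwall bound. So the argument, as written, does not close.

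This is precisely the point where the paper uses the null-completeness hypothesis, and not where you think it is used. Locally one \emph{always} has $\nabla_{\partial_v}\partial_v=0$ in adapted coordinates; no hypothesis is needed for that. What null completeness buys is the ability to propagate the adapted chart along the null geodesic $c(t)$ through $\gamma(0)$: the paper builds a sequence of \emph{compatible} adapted charts $U_{p_i}$ along $c$, with $p_i=c(t_i)$ and $t_i\to\infty$, glued so that the $v$-coordinate now ranges over all of $\R$. Compactness of $M$ is used to ensure each chart has uniform transverse radius $r$ (normalized to $r=2$), so that the $x$-equation is the geodesic equation of a Riemannian metric on a ball of radius $2$; a unit-speed geodesic starting at the centre then stays in the ball for $t\in[0,1]$. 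Since the $v$-domain is now $\R$ and the coefficients are uniformly controlled (they come from metric components on charts over compact $M$), the solution $(v(t),x(t))$ cannot leave the tube $\R\times B_n(0,2)$ on $[0,1]$, and the lemma ``$g(\dot\gamma,\dot\gamma)=1\Rightarrow\gamma$ is defined on $[0,1]$'' follows. Your identification of the obstacle (``global existence of an affine null field $V$'') is therefore slightly misplaced: the essential use of null completeness is to extend the chart domain in the $s$-direction so that the decoupled ODE system lives on $\R\times B$, not merely to kill the Riccati term locally.
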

Theorem \ref{Introduction: theorem tot geod lightlike foliation completeness} includes the class of (compact) Brinkmann spacetimes. A Brinkmann spacetime is a Lorentzian manifold with a parallel null vector field $V$, so the codimension $1$ foliation $\mathfrak{F}$ tangent to $V^\perp$ is lightlike and totally geodesic. It is proved in \cite{mehidi2022completeness} that a compact Brinkmann spacetime is complete. In particular, the completeness of geodesics tangent to $\mathfrak{F}$ is a leafwise completeness result. Theorem \ref{Introduction: theorem tot geod lightlike foliation completeness} can be viewed as a generalization of the leafwise completeness to the situation where only a totally geodesic foliation is given. This includes, for instance, the broader class of compact (locally) \textit{Kundt spacetimes} (see \cite{mzkundt} for a recent introductory survey).
\medskip

As an application, we get the following consequences:
\begin{corollary}\label{Intro-Cor: parallel null line complete}
     Let $(M,l)$ be a compact Lorentzian manifold with a parallel null line field $l$.  The foliation tangent to $l^\perp$ is geodesically complete if and only if the (null) geodesics tangent to $l$ are complete. 
\end{corollary}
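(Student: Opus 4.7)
The plan is to derive the corollary from Theorem \ref{Introduction: theorem tot geod lightlike foliation completeness} by verifying that the orthogonal distribution $l^\perp$ satisfies the hypotheses of that theorem, namely that it defines a codimension $1$ totally geodesic lightlike foliation of $M$, and that the null geodesics tangent to the leaves coincide with the (null) geodesics of $M$ tangent to $l$.

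First I would pass to a local parallel null vector field. Since $l$ is a $\nabla$-parallel null line field, the connection $\nabla$ induces a flat connection on the line bundle $l \to M$, so around every point one can choose a nonvanishing local section $V$ of $l$ with $\nabla V = 0$. Globally such a $V$ may fail to exist (holonomy of the line bundle), but the properties I need to establish are local, so working with $V$ on open sets is enough.

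Next I would verify the three required properties of the distribution $l^\perp = V^\perp$. For integrability, given $X, Y \in V^\perp$, differentiating $g(Y,V) = 0$ along $X$ and using $\nabla V = 0$ yields $g(\nabla_X Y, V) = 0$; symmetrically $g(\nabla_Y X, V) = 0$, so $g([X,Y], V) = 0$ and $[X,Y] \in V^\perp$. For the totally geodesic property, for $X \in V^\perp$ one has $g(\nabla_X X, V) = X\!\cdot\! g(X,V) - g(X, \nabla_X V) = 0$. For the lightlike property, $V \in V^\perp$ because $V$ is null, so the induced metric on each leaf $L$ is degenerate; at any $p \in L$ the radical equals $T_p L \cap (T_p L)^\perp = l_p^\perp \cap l_p = l_p$, using that $l$ is null.

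Finally, since each leaf $L$ is totally geodesic in $M$, its intrinsic geodesics coincide with the ambient geodesics of $M$ tangent to $TL$; and the null geodesics of the degenerate metric on $L$ are precisely the geodesics tangent to the radical, which by the previous step is $l|_L$. Applying Theorem \ref{Introduction: theorem tot geod lightlike foliation completeness} to the foliation tangent to $l^\perp$ then yields the desired equivalence. I do not foresee a serious obstacle: the corollary is essentially a direct specialization of Theorem \ref{Introduction: theorem tot geod lightlike foliation completeness}, the only points requiring care being the handling of a parallel null \emph{line} field (rather than a globally defined parallel vector field) via local trivializations, and the identification of the radical of the leafwise metric with $l$.
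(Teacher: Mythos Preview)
Your approach is correct and is exactly what the paper intends: the corollary is stated there (in Section~\ref{section: Foliated lightlike geometry: leafwise completeness}) as an immediate application of Theorem~\ref{Introduction: theorem tot geod lightlike foliation completeness} with no further argument, so your verification that $l^\perp$ is an integrable, totally geodesic, lightlike distribution with radical $l$ simply fills in the details the paper omits.

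One correction: your claim that the induced connection on the line bundle $l$ is flat (hence admits a local \emph{parallel} section $V$) is not justified. A parallel line field only guarantees $\nabla_X V \in l$ for every local section $V$ of $l$; the induced connection on $l$ can have nonzero curvature---indeed, the gap between a parallel null vector and a merely parallel null line is exactly what the paper exploits in distinguishing the $\L_\u(n)$ and $\L(n)$ geometries. Fortunately your computations do not actually need $\nabla V = 0$: taking any nonvanishing local section $V$ of $l$ and writing $\nabla_X V = \omega(X)\,V$, one still has $g(Y,\nabla_X V) = \omega(X)\,g(Y,V) = 0$ and $g(X,\nabla_X V) = \omega(X)\,g(X,V) = 0$ for $X,Y \in l^\perp$, so integrability and total geodesicity follow exactly as you wrote.
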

In \cite[Section 4.3, Example 3]{leistner2016completeness}, the authors ask whether a compact Lorentzian manifold with a parallel null line field $l$ has a foliation (tangent to $l^\perp$) with complete leaves. The corollary above gives a necessary and sufficient condition for this to be true. In addition, we give an example where this condition is not satisfied, resulting in incomplete leaves.  
This example is constructed as the product of a circle with a Clifton-Pohl torus (Example \ref{Example: CP higher dimension}).
\begin{corollary}\label{Intro-Cor: Killing-complete}
     Let $(M,V)$ be a compact Lorentzian manifold with a null Killing field. Assume that the distribution $V^\perp$ is integrable. Then, the foliation tangent to $V^\perp$ is totally geodesic, and a geodesic tangent to it is complete. 
\end{corollary}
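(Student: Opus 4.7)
The plan is to reduce the statement to Theorem~\ref{Introduction: theorem tot geod lightlike foliation completeness} by assembling two elementary observations about null Killing fields.

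First I would check that the foliation $\mathfrak{F}$ tangent to $V^\perp$ is totally geodesic and lightlike. For local sections $X, Y$ of $V^\perp$, the Killing equation combined with $g(X,V)=g(Y,V)=0$ yields
\begin{equation*}
g(\nabla_X Y, V) + g(\nabla_Y X, V) = -g(Y,\nabla_X V) - g(X, \nabla_Y V) = 0,
\end{equation*}
while the integrability hypothesis gives $[X,Y]\in V^\perp$, hence $g(\nabla_X Y, V) = g(\nabla_Y X, V)$. Together these force $g(\nabla_X Y, V) = 0$, so $\nabla_X Y \in V^\perp$. Since $V$ is null and lies in $V^\perp$, the induced metric on each leaf is degenerate with radical $\mathbb{R}V$, and $\mathfrak{F}$ is a codimension one lightlike foliation.

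Next I would verify the null geodesic hypothesis of Theorem~\ref{Introduction: theorem tot geod lightlike foliation completeness} along the radical direction. Applying the Killing equation to the pair $(X,V)$ and using $g(V,V)\equiv 0$ gives $g(X,\nabla_V V) = -\tfrac{1}{2}\, X\!\cdot\! g(V,V) = 0$ for every vector field $X$, so $\nabla_V V = 0$. The integral curves of $V$ are therefore affinely parametrized geodesics; compactness of $M$ makes the flow of $V$ complete, so these null geodesics are defined on all of $\mathbb{R}$.

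Finally I would invoke Theorem~\ref{Introduction: theorem tot geod lightlike foliation completeness} applied to the totally geodesic lightlike foliation $\mathfrak{F}$: the completeness of null geodesics tangent to $\mathbb{R}V$ yields the completeness of every geodesic inside a leaf. The main content of the argument is packaged into Theorem~\ref{Introduction: theorem tot geod lightlike foliation completeness} itself; the only potentially subtle point in the corollary is the interplay between the Killing and integrability hypotheses used to produce the totally geodesic property, and this is settled by the short calculation above. The remaining pieces—namely that a Killing field of constant norm is geodesic and that a smooth flow on a compact manifold is complete—are standard.
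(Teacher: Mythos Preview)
Your proposal is correct and follows exactly the approach the paper intends: verify that $\mathfrak{F}$ is totally geodesic and lightlike via the Killing and integrability conditions, observe that $\nabla_V V=0$ so the integral curves of $V$ are complete null geodesics by compactness, and then invoke Theorem~\ref{Introduction: theorem tot geod lightlike foliation completeness}. The paper does not spell out these elementary steps (it simply states the corollary as a consequence of the theorem), and you have filled them in accurately.
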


\subsection{On the geodesic completeness in presence of a null Killing field} Given a compact Lorentzian manifold with a null Killing field, one could extend the question to global completeness, that is, the completeness of the entire manifold $M$. One motivation for this is the completeness result obtained in \cite{mehidi2022completeness} when $V$ is additionally parallel. 

When the metric admits a timelike Killing field $V$, it was observed in \cite{romero1995completeness} that the intersection of the levels of the metric with the levels of $g(\,\cdot,V)$ are compact, which leads to completeness. However, this is no longer the case when $V$ is spacelike or lightlike. 
In fact, in the spacelike case, taking a product of a non-complete Lorentzian manifold with the circle (endowed with a Riemannian metric) shows that existence of a spacelike Killing field does not imply completeness. But there is no known incomplete example in the lightlike case. 
\subsubsection{\textbf{An incomplete example}} We construct in this paper two examples of incomplete compact Lorentzian manifolds with a null Killing field. The first one is locally homogeneous. We state

\begin{theorem}
    There exists an incomplete left invariant Lorentzian metric on $G:=\Sol \times\R$ with a central null Killing field $V$. Taking a quotient $M:=\Gamma\backslash G$, where $\Gamma$ is a cocompact lattice of $G$, defines an incomplete compact Lorentzian manifold  on which the flow of $V$ acts isometrically.
\end{theorem}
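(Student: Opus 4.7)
The plan is to construct a concrete left-invariant Lorentzian metric on $G = \Sol \times \R$ and exhibit a geodesic blowing up in finite affine parameter; incompleteness then descends to any compact quotient. Let $\{e_1, e_2, e_3\}$ be a basis of $\sol$ with $[e_3, e_1] = e_1$, $[e_3, e_2] = -e_2$, $[e_1, e_2] = 0$, and let $V$ span the central $\R$ factor of $\g$. I would define a scalar product on $\g = \sol \oplus \R$ by
\[
\la V, e_1 \ra = 1, \qquad \la e_2, e_2 \ra = \la e_3, e_3 \ra = 1,
\]
with all other pairings zero; this has Lorentzian signature, so it induces a left-invariant Lorentzian metric $g$ on $G$. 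In global coordinates $(t,x,y,z)$, with $\Sol = \R^2 \rtimes \R$ the standard semidirect product, the metric takes the form
\[
g \,=\, 2 e^{-z}\, dt\, dx + e^{2z}\, dy^2 + dz^2.
\]
Here $V = \partial_t$ is null, central in $\g$, and Killing.

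To reduce the geodesic equations I would exploit all obvious Killing fields: since $g$ is left-invariant, every right-invariant vector field is Killing, so besides $\partial_t$ we also have $\partial_x, \partial_y$, and $\widetilde X_3 := x\partial_x - y\partial_y + \partial_z$. The four associated constants of motion along a geodesic $\gamma(s) = (t,x,y,z)(s)$ are
\[
p_t = e^{-z}\dot x, \qquad p_x = e^{-z}\dot t, \qquad p_y = e^{2z}\dot y, \qquad p_K = x p_x - y p_y + \dot z.
\]

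The key step, and the main technical obstacle, is to extract from these a scalar ODE for $z(s)$ that blows up in finite time. Differentiating $p_K$ and substituting the first three relations yields the Liouville-type equation
\[
\ddot z \,=\, -p_t p_x\, e^{z} + p_y^2\, e^{-2z}.
\]
Choosing $p_y = 0$ and $p_t p_x < 0$ reduces this to $\ddot z = |p_t p_x|\, e^z$, whose first integral is $\dot z^2 = 2|p_t p_x|\, e^z + C$. For any initial data with $\dot z(0) > 0$,
\[
s_* \,=\, \int_{z(0)}^{+\infty} \frac{dz}{\sqrt{2|p_t p_x|\, e^z + C}} \,<\, +\infty,
\]
so $z(s) \to +\infty$ at finite affine time $s_*$, and the geodesic is inextendable in $G$.

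Finally, $\Sol$ admits cocompact lattices $\Gamma_0$ (standard suspensions of hyperbolic elements of $\SL(2,\Z)$ acting on $T^2$), hence $\Gamma := \Gamma_0 \times \Z$ is a cocompact lattice in $G$, and $M := \Gamma \backslash G$ is a compact Lorentzian manifold inheriting $g$. The incomplete geodesic projects to an inextendable geodesic of $M$ by the usual unique-lifting argument for the local isometry $G \to M$. Since $V$ is central, its left-translation flow commutes with the $\Gamma$-action and descends to $M$ as an isometric $\R$-action. The genuine subtlety lies in the choice of scalar product: the off-diagonal pairing $\la V, e_1 \ra$ couples the null central direction to the $\Sol$-dynamics via $\widetilde X_3$, producing the exponential source in $\ddot z$. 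More symmetric choices (e.g.\ $\la V, e_3\ra \ne 0$ with $V^\perp$ Riemannian) yield enough conserved quantities that $z(s)$ satisfies a merely oscillatory equation, and all geodesics are complete.
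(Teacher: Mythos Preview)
Your proof is correct and uses the same left-invariant metric as the paper (with your $(e_3,e_1,e_2,V)$ matching the paper's $(T,X,Y,V)$), but the computation proceeds along a different route. The paper works entirely at the Lie-algebra level via the Euler--Arnold vector field $y\mapsto \ad_y^*y$ on $\g$: it observes that the hyperplane $\{y=0\}$ (your $p_y=0$) is invariant, identifies it with the subalgebra $\aff(\R)\oplus\R$ carrying an already-analyzed incomplete metric, and reads off an explicit blow-up solution of Riccati type $\dot t=\tfrac12(t^2-C)$. You instead write the metric in global group coordinates, use the right-invariant Killing fields $\partial_x,\partial_y,\widetilde X_3,\partial_t$ to extract four first integrals, and derive the second-order equation $\ddot z=|p_tp_x|\,e^z$ directly. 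The two calculations are dual: your conserved momenta are precisely the components of the Euler--Arnold velocity in disguise, and your $z$-equation is the paper's $\dot t$-equation after the change of variables. Your approach is more self-contained (no appeal to a separate 3-dimensional example) and makes the geometric mechanism—exponential divergence along the $e_3$-direction driven by the off-diagonal coupling $\langle V,e_1\rangle$—quite transparent; the paper's approach is slightly shorter and highlights the totally geodesic $\Aff(\R)\times\R$ subgroup as the source of incompleteness.
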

\subsubsection{\textbf{The case of dimension $3$}} It is a simple fact that a compact Lorentzian surface with a null Killing field is complete (in fact, it is even flat). Our second incomplete example is more elaborate and is constructed in dimension $3$. 
Let $(M,V)$ be a compact Lorentzian manifold of dimension $3$ with $V$ a null Killing field. The first observation is that, in this dimension,  the distribution $V^\perp$ is integrable and totally geodesic, and 
the geodesics tangent to $V^\perp$ are complete by Corollary \ref{Intro-Cor: Killing-complete}.   However, for the question of global completeness, the situation is different depending on the dynamics of $V$: when $V$ has complicated (non-equicontinuous) dynamics, the manifold is complete; in the opposite (equicontinuous) case, we construct an incomplete example (see details below). \medskip

\paragraph{\textbf{I) Non-equicontinuous case}} This means that the closure of the flow of $V$ in the isometry group is non-compact. The global behavior of $V$ is well understood \cite{zeghib1996killing}: Zeghib classifies all such flows on Lorentzian $3$-manifolds, and it follows from the classification that $M$ is complete  \cite[Remark 16.1]{zeghib1996killing}.  The case where $V$ is non-equicontinuous leads to a non-compact isometry group. In this more general case, namely, when the isometry group of a compact Lorentzian $3$-manifold is non-compact, the manifold is complete by Frances's classification results \cite{frances}. The question remains open in higher dimensions, and completeness of geodesics transversal to $V^\perp$ seems non-trivial in general.\medskip   

\paragraph{\textbf{II) Equicontinuous case}}
The case where the flow of $V$ is equicontinuous remains widely open. In this case, the closure of the flow of $V$ in the isometry group is an abelian compact subgroup, hence a torus of some dimension $k$. We show that  if $k\geq3$, the manifold is just a flat $3$-torus (Proposition \ref{Prop: structure of G}). When $k=2$, we show completeness in some specific situations, namely, when the $\T^2$-action is free and the metrics induced on the fibers of the $\T^2$-principal bundle have the same signature. Finally, we consider the case where $k=1$. Here,  the action of $V$ is periodic, and when it is free,  $M$ is an $\mathbb{S}^1$-bundle over a torus. Even if the dynamics of $V$ is trivial, the reduction of the geodesic equation to dimension $2$, due to the existence of a Noether first integral, leads to a complicated  situation. 
The study of this situation raises interesting questions on the relationship between the geodesic completeness and the dynamics of certain vector fields defined on the torus. We prove the following
 
\begin{theorem}\label{Intro: Thm incomplete 3d}
    There exists an incomplete compact $3$-dimensional Lorentzian manifold with a periodic null Killing field.
\end{theorem}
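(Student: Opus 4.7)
The plan is to construct an explicit compact 3-dimensional Lorentzian manifold $(M,g)$ with a periodic null Killing field $V$ and to exhibit an incomplete geodesic. Since $V$ is periodic and (generically) generates a free $S^1$-action, $M$ is an $S^1$-principal bundle over a compact surface $B$, and the natural choice in view of the discussion preceding the theorem is $B = T^2$. A Lorentzian metric $g$ for which $V$ is null and Killing is determined by a symmetric $(0,2)$-tensor $\bar h$ and a $1$-form $\bar\omega$ on the base $T^2$, together with a connection $1$-form $\theta$ on the bundle, the Lorentzian signature becoming a pointwise algebraic condition on $(\bar h, \bar\omega)$.

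The first step is to reduce the geodesic equation via its two first integrals: the Noether constant $c := g(\dot\gamma, V)$ produced by the Killing symmetry, and the energy $E := g(\dot\gamma, \dot\gamma)$. For geodesics with $c\ne 0$ these two quantities reduce the full system to a second-order Riccati-type ODE on the base for the projected curve $\bar\gamma := \pi\circ\gamma$. Global completeness of $\gamma$ is then equivalent to the absence of finite-time blow-up of the base velocity $\dot{\bar\gamma}$, which recasts the problem as the $2$-dimensional dynamical question on the torus announced in the introduction.

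The main obstacle is the following baseline observation: for the Walker-type ansatz $g = 2\,du\,dv + H(u,x)\,du^2 + h(u,x)\,dx^2$ on $T^3$---in which $V = \partial_v$ is in fact \emph{parallel}, not merely Killing---the substitution $\psi := \sqrt{h}\,\dot x$ turns the reduced Riccati-type equation into the linear first-order ODE $\dot\psi = -\tfrac{c\,h_u}{2h}\,\psi + \tfrac{c^2 H_x}{2\sqrt h}$, precluding any finite-time blow-up and forcing completeness (in accordance with the parallel case \cite{mehidi2022completeness}). Hence any incomplete example must employ a null Killing field that is \emph{not} parallel. This can be realized by enriching the metric with a cross term such as $2Q(u,x)\,dx\,dv$ with $Q_u\ne 0$, so that $d\omega\ne 0$ for $\omega := g(\cdot, V)$; equivalently, by working on a non-trivial $S^1$-bundle (e.g.\ a Heisenberg nilmanifold over $T^2$), or by making the horizontal form $\bar\omega$ represent a non-trivial class in $H^1(T^2)$. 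In all these options the local linearizing substitution carries a genuine global monodromy and no longer precludes blow-up.

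The remaining, delicate step is to tune the data $(\bar h, \bar\omega, \theta)$ so that the reduced equation on $T^2$ actually develops a finite-time blow-up of $\dot{\bar\gamma}$, while preserving smoothness, Lorentzian signature, and global periodicity. Once such a choice is in place, a direct integration of the reduced ODE along the escaping base trajectory will show that its lift $\gamma$ to $M$ is a geodesic defined on a bounded parameter interval, yielding the claimed incompleteness.
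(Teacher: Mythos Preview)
Your outline correctly identifies the reduction: project to the base $T^2$, use the Noether integral $c=g(\dot\gamma,V)$ and the energy, and obtain a Riccati-type scalar equation whose blow-up is equivalent to incompleteness. You are also right that the parallel case must be avoided. However, the proposal stops exactly at the point where the actual content of the proof begins: you say ``the remaining, delicate step is to tune the data $(\bar h,\bar\omega,\theta)$ so that the reduced equation \ldots\ actually develops a finite-time blow-up'', and you do not carry this out. That step is the theorem.

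Two more concrete issues. First, your suggested mechanisms for non-parallelness --- a cross term $2Q\,dx\,dv$, a non-trivial $S^1$-bundle, or a non-trivial class in $H^1(T^2)$ --- are off target. The paper's example lives on the \emph{trivial} bundle $T^2\times S^1$ with integrable horizontal distribution ($\mu=0$); non-parallelness of $V$ is encoded by the bracket coefficient $f$ in $[X,Y]=fY+hX$, not by bundle topology or cohomology. (Also, a form representing a non-trivial class in $H^1$ is closed, which is the opposite of what you want.) Second, and more importantly, you are missing the dynamical idea that makes the Riccati blow-up argument work on a \emph{compact} base: one must ensure the projected geodesic is trapped for all forward time in a region where the leading Riccati coefficient is bounded below by some $\epsilon>0$. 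The paper achieves this by choosing $X^*=\cos x\,\partial_x-\sin x\,\partial_y$ and $Y^*=\sin x\,\partial_x+\cos x\,\partial_y$ on $T^2$, whose integral curves form Reeb components; a null geodesic with $\alpha=1$ and $b(0)>0$ launched from the boundary leaf $x=\pi/2$ can neither cross a band nor make a turnaround, hence remains in the half-band $\pi/2<x<\pi-\delta$ where $f=\sin x>\epsilon$ and $h=\cos x<0$. The inequality $\dot b\ge \tfrac{\epsilon}{2}b^2$ then forces finite-time blow-up. Without such a trapping mechanism, the coefficient $f$ on $T^2$ must change sign (it has zero mean in any reasonable normalization), and there is no a priori reason for blow-up. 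Your proposal does not address this, so as written it is not a proof.
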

\bigskip

The next subsection is devoted to Part \textbf{(B)}.

\subsection{$(G,X)$-foliated completeness}\label{Intro-Subsection: (G,X)-foliated} Completeness of compact $(G,X)$-manifolds is a very rich and active topic of old and current research. It has developed in many directions, especially in the case of affine structures (see, for instance, \cite{carriere1989autour,fried1981affine, fried1986distality}). Completeness of compact $(G,X)$-manifolds is in general far from being easy. 
Here, we consider a compact manifold $M$ with a foliation $\mathfrak{F}$ having a \textbf{tangential} $(G,X)$-structure, meaning that the $(G,X)$-structure is supported on the leaves of $\mathfrak{F}$ and varies continuously from one leaf to another. We call the pair $(M,\mathfrak{F})$ a \textbf{$(G,X)$-foliated manifold}. A precise definition is given in Definition \ref{Defi: GX foliation}. 
\medskip

Note that $(G,X)$-foliated manifolds arise naturally in many interesting geometric contexts. For example,  the so-called pp-waves (Example \ref{Intro-Example: pp-waves}), and more generally, weakly pp-waves (Example \ref{Intro-Example: weakly pp-wave}), are codimension $1$ foliated Lorentzian manifolds with a rigid structure on the leaves. For a pp-wave (resp. weakly pp-wave), this structure is a  particular affine structure called the `affine unimodular lightlike geometry' (resp. the `affine lightlike geometry'). Those structures are introduced in Subsections \ref{Intro-Subsection: affine unimodular lightlike geometry} and \ref{Intro-Subsection: Affine lightlike geometry} below. 

Note that $(G,X)$-foliations are briefly mentioned in Thurston's book \cite[p.160]{thurston2014three}, but not precisely defined. However, there is some old literature dealing with tangential structures on foliations. For instance, \cite{furness1974affine, inaba1993tangentially} study foliations with an affine tangential structure. 
While these works address different questions (in \cite{furness1974affine} the author assumes completeness of the leaves to prove topological results on the manifold, while \cite{inaba1993tangentially} deals with the existence of such foliations on the $2$-torus and the $3$-sphere, independently of completeness), the present paper introduces a new perspective by focusing on the completeness problem. A more precise formulation of Item \textbf{(B)} is the following
\begin{problem}\label{Prob: foliated problem 1}
 Consider a $(G,X)$-structure for which any compact $(G,X)$-manifold is complete. Is it true that for any compact $(G,X)$-foliated manifold $(M, \mathfrak{F})$, the leaves of $\mathfrak{F}$ are $(G,X)$-complete? We call this the \textbf{$(G,X)$-foliated completeness problem}.   
\end{problem}
To our knowledge, the \textbf{$(G,X)$-foliated completeness problem} has not been considered before. Besides the fact that it is itself interesting and natural, there are some particular geometric structures for which the full completeness reduces essentially to a $(G,X)$-foliated completeness problem. 

\subsubsection{\textbf{Affine unimodular lightlike geometry}}\label{Intro-Subsection: affine unimodular lightlike geometry} 
An $(\Aff(\R^{n+1}), \R^{n+1})$-structure on a manifold $M$ is called an affine structure. For a complete survey on affine structures, one can see  \cite[Ch. 6-10]{goldman2022geometric}. We will be interested in a subgeometry of the affine geometry, which we call \textit{the affine unimodular lightlike geometry}. It was first introduced in \cite{article1}.  The structure group is a subgroup of the affine  group $\Aff(\R^{n+1})$, given by $$\L_\u(n,0):=\left\{\begin{pmatrix}
1 & \alpha^{\top} \\
0 & A 
\end{pmatrix}\ltimes \R^{n+1} \ | \  \alpha\in \R^{n}, \  A\in \O(n)\right\},$$
where $\mathsf{O}(n)$ is the orthogonal group of the Euclidean scalar product on $\R^n$. The $(\mathsf{L}_\u(n,0), \R^{n+1})$-geometry is the maximal affine geometry with two $\mathsf{L}_\u(n,0)$-invariant objects: a constant vector field $V :=\partial_{x_1}$ and a degenerate Riemannian metric $h:=dx_2^2+dx_3^2+ \ldots +dx_{n+1}^2$ with radical $\R V$. The notation $(n,0)$ refers to the signature of this degenerate Riemannian metric on $\R^{n+1}$, and is useful to distinguish between different signatures in the next sections. When the second entry is $0$, we simply write $\L_\u(n)$ instead of $\mathsf{L}_\u(n,0)$.

\begin{example}[pp-waves are $(\mathsf{L}_\u(n), \R^{n+1})$-foliated manifolds]\label{Intro-Example: pp-waves}
    A pp-wave  is a Lorentzian manifold $(M,V)$ with a parallel null vector field $V$, such that the (integrable) parallel distribution $V^\perp$ is tangent to a (totally geodesic) foliation $\mathfrak{F}$ with flat leaves (flat with respect to the induced connection). It admits ``Brinkmann local coordinates'' of the form
    \begin{align*}
     2 du dv + H(u,x) du^2 + \sum_{i=1}^n (dx^i)^2.
    \end{align*} 
    A pp-wave of dimension $n+2$, with its foliation $\mathfrak{F}$, provides a natural example of a $(\mathsf{L}_\u(n), \R^{n+1})$-foliated manifold.
    Indeed, it is well known that the data of a flat affine connection on a smooth manifold of dimension $n+1$ is equivalent to having an $(\Aff(\R^{n+1}), \R^{n+1})$-structure on it. So $\mathfrak{F}$ inherits a tangential affine structure that varies smoothly. Moreover, the leaves of $\mathfrak{F}$ 
    admit a tangent parallel lightlike vector field $V$,
    and an induced parallel degenerate Riemannian metric with  radical $\R V$. This gives the leaves a $(\mathsf{L}_\u(n), \R^{n+1})$-geometry. 
\end{example}

It turns out that the (flat) connection preserved by $\L_\u(n)$ belongs to what is referred to as a distal connection in \cite{fried1986distality}. It is shown in this paper that such a connection is  complete when the manifold is compact. For the foliated version, we show: 
\begin{theorem}\label{Introduction: Theorem 1}
Let $(M,\mathfrak{F})$ be a compact $(\mathsf{L}_\u(n), \R^{n+1})$-foliated manifold.  The leaves of $\mathfrak{F}$ are $(\mathsf{L}_\u(n), \R^{n+1})$-complete. In particular, when the codimension is $0$, we obtain that a compact $(\mathsf{L}_\u(n), \R^{n+1})$-manifold is complete.\medskip
\end{theorem}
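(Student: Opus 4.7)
The starting point is that the model $(\L_\u(n), \R^{n+1})$ carries two $\L_\u(n)$-invariant objects: the constant vector field $V = \partial_{x_1}$ and the degenerate Riemannian metric $h = dx_2^2 + \cdots + dx_{n+1}^2$ with radical $\R V$. Because the $(\L_\u(n), \R^{n+1})$-structure on $(M,\mathfrak F)$ is tangential and varies smoothly across leaves, these two invariants descend to a globally defined smooth vector field $\bar V$ on $M$ tangent to $\mathfrak F$, and to a smooth leafwise degenerate Riemannian metric $\bar h$ on $T\mathfrak F$ whose leafwise radical is $\R\bar V$; both are parallel with respect to the induced leafwise flat affine connection $\bar\nabla$.

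Since $M$ is compact, the globally defined vector field $\bar V$ is complete. The parallelism of $\bar V$ for $\bar\nabla$ forces its integral curves to be affinely parameterized $\bar\nabla$-geodesics, so on every leaf $L$ the geodesics whose initial velocity is proportional to $\bar V$ are complete. It remains to treat maximal geodesics $\gamma\colon [0,T)\to L$ whose initial velocity is not proportional to $\bar V$: the parallelism of $\bar h$ then yields that $\bar h(\gamma',\gamma')$ is constant and strictly positive along $\gamma$, so $\gamma$ carries a nonzero constant ``transverse speed''.

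To finish, I would adapt the completeness scheme for compact $(\L_\u(n), \R^{n+1})$-manifolds established in \cite{article1}: the linear part of $\L_\u(n)$ is isomorphic to $\R^n\rtimes \O(n)$, whose elements have all eigenvalues on the unit circle, so it is a distal linear group, to which Fried's theorem \cite{fried1986distality} applies in the codimension $0$ case. The main difficulty in the foliated setting is that leaves of $\mathfrak F$ are typically non-compact, so Fried's result cannot be invoked leaf by leaf as a black box. My plan is to substitute compactness of a single affine manifold by compactness of the ambient $M$: a finite foliated atlas together with the tangential smoothness of the $(\L_\u(n),\R^{n+1})$-structure yields uniform bounds on the transition cocycle, and this uniform control, combined with the constant transverse $\bar h$-speed of $\gamma$ and the global completeness of the flow of $\bar V$, should allow one to lift the developed affine line $\Dev\circ\tilde\gamma(t)=p+tw$ all the way to $t\in \R$ inside the holonomy cover $\tilde L$, contradicting $T<\infty$. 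Making this uniform-across-$M$ lifting step rigorous, which is precisely what replaces compactness of a leaf by compactness of $M$ in the classical argument, will be the core technical difficulty.
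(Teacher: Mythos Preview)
Your setup is correct and matches the paper's: the pullbacks $\bar V$ and $\bar h$ are the key invariants, $\bar V$ is complete by compactness of $M$, and its orbits are $\bar\nabla$-geodesics. But the proposal stops exactly where the real work begins. Invoking Fried's distality theorem and ``uniform bounds on the transition cocycle'' is not a proof; Fried's argument relies on the holonomy group being a discrete group acting syndetically on the developing image, and it is not at all clear how a uniform cocycle bound on a compact foliated atlas would reproduce this for a single non-compact leaf. The lifting step you flag as the core difficulty is the entire content of the theorem, and you have not supplied a mechanism for it.

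The paper bypasses distality entirely and instead uses Riemannian submersion theory. One chooses an auxiliary Riemannian metric $g$ on $M$ such that on $T\mathfrak F$ it agrees with $\bar h$ on a complement $\mathcal S$ to $\R\bar V$ and makes $\bar V$ unit and $g$-orthogonal to $\mathcal S$. Each leaf $F$ is then complete for $g_{|F}$ (Fact~\ref{Fact: complete riemannian leaf}), and the composition $\pi := \mathfrak p\circ\Dev\colon \widetilde F\to \R^n$, where $\mathfrak p\colon\R^{n+1}\to\R^{n+1}/\R V\cong\R^n$ is the quotient by the radical, is a Riemannian submersion onto its image with horizontal distribution the lift of $\mathcal S$. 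Horizontal geodesics are complete, their projections are Euclidean lines, so the image is all of $\R^n$; Hermann's theorem then makes $\pi$ a fiber bundle over a contractible base, hence trivial with connected fiber. Surjectivity of $\Dev$ along the fibers follows from the flow equivariance you already noted, and injectivity from connectedness of the fiber. This replaces your proposed ``uniform lifting'' by a clean topological argument, and is what you are missing.
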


\noindent This gives  an affirmative answer to the \textbf{$(\mathsf{L}_\u(n), \R^{n+1})$-foliated completeness problem}.\\

One could also consider a ``dual'' structure to $(\L_\u(n),\R^{n+1})$, where the structure group is $$\L_\u^*(n):=\left\{\begin{pmatrix}
A & \beta \\
0 & 1
\end{pmatrix}\ltimes \R^{n+1} \ | \  \beta\in \R^{n}, \  A\in \O(n)\right\}.$$

\noindent The linear part of $\L_\u^*(n)$ is the dual of the linear part of $\L_\u(n)$, hence the name of dual geometry. It has an invariant parallel $1$-form (instead of an invariant parallel vector field), and an invariant foliation (instead of only a distribution). For this dual geometry, we establish an analogous statement to Theorem \ref{Introduction: Theorem 1}.

\begin{theorem}\label{Introduction: Theorem 1*}
Let $(M,\mathfrak{F})$ be a compact $(\mathsf{L}_\u^*(n), \R^{n+1})$-foliated manifold.  The leaves of $\mathfrak{F}$ are $(\mathsf{L}_\u^*(n), \R^{n+1})$-complete. 
\end{theorem}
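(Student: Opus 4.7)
The plan is to follow the dual of the approach used for Theorem \ref{Introduction: Theorem 1}, exchanging the roles of the invariant parallel vector field and the transverse degenerate Euclidean metric. In coordinates $(x_1, \dots, x_n, y)$ on $\R^{n+1}$, the group $\L_\u^*(n)$ preserves the parallel $1$-form $\omega_0 = dy$, the codimension-$1$ foliation $\mathcal{G}_0 := \ker \omega_0$ by affine hyperplanes $\{y = \mathrm{const}\}$, the flat Euclidean metric $h_0 = \sum_{i=1}^n dx_i^2$ on each leaf of $\mathcal{G}_0$, and transversely to $\mathcal{G}_0$ a $1$-dimensional translation structure parametrized by $y$.

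Pulling these model invariants back through the leafwise developing maps, each leaf $L$ of $\mathfrak{F}$ inherits a parallel leafwise $1$-form $\omega^L$, the codimension-$1$ subfoliation $\mathcal{G}^L := \ker\omega^L \subset TL$, and a tangential flat Euclidean structure on the leaves of $\mathcal{G}^L$. By smoothness of the tangential $(G,X)$-structure (Definition \ref{Defi: GX foliation}), these assemble into a global smooth $1$-form $\omega \in \Gamma(T^*\mathfrak{F})$ on $M$ that is parallel along $\mathfrak{F}$, a foliation $\mathcal{G}$ of $M$ refining $\mathfrak{F}$, and a tangential Euclidean metric on $T\mathcal{G}$. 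Completeness of the $(\L_\u^*(n), \R^{n+1})$-structure on $L$ is equivalent to the developing map $\Dev \colon \tilde L \to \R^{n+1}$ being a diffeomorphism, and by the invariant product splitting $\R^{n+1} \simeq \R^n \times \R$ this decomposes into two claims: (a) every leaf of $\mathcal{G}^L$ is complete as a flat Euclidean manifold, and (b) the primitive $h \colon \tilde L \to \R$ of the pullback of $\omega^L$ is surjective onto $\R$.

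Claim (a) falls into the ``Riemannian situation'' mentioned just after Problem \ref{Problem: FC}: extending the tangential Euclidean metric on $T\mathcal{G}$ to a smooth Riemannian metric on $TM$ and invoking compactness of $M$ for ambient completeness, a standard Riemannian-foliation argument upgrades this to leafwise completeness of each $\mathcal{G}^L$-leaf. Claim (b) is the substantive step. Assuming for contradiction that $h$ is bounded above by some $t_0 < \infty$, I would pick a sequence $p_n \in \tilde L$ with $h(p_n) \to t_0$, project to a convergent subsequence $\bar p_n \to \bar p$ in $M$, choose a smooth line distribution $E \subset T\mathfrak{F}$ complementary to $T\mathcal{G}$, and integrate along $E$ to produce a short transverse arc through $\bar p$. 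Using the parallelism of $\omega$ along $\mathfrak{F}$, the primitive $h$ extends continuously along lifts of this arc to $\tilde L$, producing values of $h$ strictly greater than $t_0$ and contradicting the supposed bound.

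The main obstacle will be step (b), because, unlike in Theorem \ref{Introduction: Theorem 1} where the transverse direction is realized by a globally defined parallel vector field of the model, here the transverse direction to $\mathcal{G}^L$ inside $L$ is only well defined modulo $T\mathcal{G}$. The complementary distribution $E$ must be chosen non-canonically, and one needs Arzel\`a--Ascoli together with compactness of $M$ to turn local transverse arcs into arbitrarily long prolongations of the developing coordinate $h$. In the special case where $\mathfrak{F}$ has codimension $0$, this recovers the completeness of compact $(\L_\u^*(n), \R^{n+1})$-manifolds, paralleling on the dual side what Theorem \ref{Introduction: Theorem 1} yields in the primal setting (see also \cite{article1}).
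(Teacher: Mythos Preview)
Your decomposition into (a) Euclidean completeness of the $\mathcal{G}$-leaves and (b) surjectivity of the transverse coordinate $h$ is exactly the structure of the paper's proof (Corollary \ref{Cor: Lu*}), and your treatment of (a) matches. The divergence is in (b), and here your argument as written has a gap and is also needlessly indirect. The gap: your limit point $\bar p\in M$ need not lie in the leaf $L$ of $\mathfrak{F}$, so the $E$-arc through $\bar p$ is in general not a curve in $L$ and has no lift to $\tilde L$; you would have to work instead with the arcs through the $\bar p_n$ and control their $\omega$-length uniformly, which you do not do. The second issue is that surjectivity of $h$ alone does not give bijectivity of $\Dev$: you also need that the level sets of $h$ in $\tilde L$ are connected (equivalently, that each consists of a single $\widetilde{\mathcal{G}^L}$-leaf), and you never address this.

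The paper avoids both problems, and your anticipated ``main obstacle'' disappears, with a single observation you overlook: since $\omega_M\in\Gamma(T^*\mathfrak{F})$ is nowhere vanishing, one can choose once and for all a smooth vector field $W$ on $M$, tangent to $\mathfrak{F}$, with $\omega_M(W)\equiv 1$ (this is just a section of an affine subbundle with contractible fiber). Because $M$ is compact, $W$ is complete; its flow on $\tilde L$ increases $h$ at unit rate, which immediately gives surjectivity of $h$ without any subsequence argument or Arzel\`a--Ascoli. The paper then invokes Fact \ref{Fact: structure of the universal cover} to see that the flow of $\widetilde{W}$ acts simply transitively on the space of $\widetilde{\mathcal{G}^L}$-leaves, which supplies the missing injectivity. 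In short, your non-canonical line field $E$ should simply be normalized to a vector field $W$ with $\omega_M(W)=1$; once you do this, your contradiction argument collapses to the paper's one-line flow argument.
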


\subsubsection{\textbf{Affine lightlike geometry}}\label{Intro-Subsection: Affine lightlike geometry} One could relax the $(\L_\u(n),\R^{n+1})$-geometry by assuming, instead of an invariant vector field, only an invariant line field.  
In this case, the structure group is 
$$\L(n):=\left\{\begin{pmatrix}
\lambda & \alpha^{\top} \\
0 & A 
\end{pmatrix}\ltimes \R^{n+1} \ | \ \lambda\in \R_{>0}, \  \alpha\in \R^{n}, \  A\in \O(n)\right\}.$$
We refer to this $(\L(n), \R^{n+1})$-geometry as the \textit{affine lightlike geometry}. \medskip

\begin{example}[weakly pp-waves are $(\mathsf{L}(n), \R^{n+1})$-foliated manifolds]\label{Intro-Example: weakly pp-wave}
    Let $(M,l)$ be a `weakly pp-wave' of dimension $n+2$, that is, a Lorentzian manifold with a null parallel line field $l$, for which the (totally geodesic) foliation $\mathfrak{F}$ orthogonal to $l$ is flat with respect to the induced connection. The leaves of $\mathfrak{F}$ have a natural affine lightlike geometry.  So weakly pp-waves give natural examples of $(\mathsf{L}(n), \R^{n+1})$-foliated manifolds.
\end{example}

Using essentially the same techniques as in Theorem \ref{Introduction: Theorem 1}, we characterize the completeness in the following way: 
\begin{theorem}\label{Introduction: Theorem 2}
    Let $(M, \mathfrak{F})$ be a compact $(\mathsf{L}(n), \R^{n+1})$-foliated manifold. The leaves of $\mathfrak{F}$ are $(\mathsf{L}(n), \R^{n+1})$-complete if and only if all geodesics tangent to the $\mathcal{V}$-leaves are affinely (or geodesically) complete, where $\mathcal{V}$ is the $1$-dimensional foliation tangent to the line field induced on any $\mathfrak{F}$-leaf by this structure.
\end{theorem}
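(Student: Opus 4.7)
The ``only if'' direction is immediate: $\mathcal{V}$-geodesics are geodesics of the leafwise $(\mathsf{L}(n),\R^{n+1})$-structure, so $(\mathsf{L}(n),\R^{n+1})$-completeness of the leaves trivially forces them to be complete. The real content is the converse. The plan is to exploit the fact that $\mathsf{L}(n)$ differs from $\mathsf{L}_{\u}(n)$ only by the positive scaling $\lambda\in\R_{>0}$ acting along $\partial_{x_1}$, i.e., tangent to $\mathcal{V}$; away from this single direction the two geometries look identical. The strategy is therefore to reduce to the distal case already treated in Theorem \ref{Introduction: Theorem 1} along the transverse directions, and to use the $\mathcal{V}$-completeness hypothesis to cover the remaining direction.

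Fix a leaf $L$, pass to its universal cover, and let $D \colon \tilde L \to \R^{n+1}$ be the developing map; I will show $D$ is a diffeomorphism. The key tool is the $\mathsf{L}(n)$-equivariant projection $\pi \colon \R^{n+1}\to\R^n$, $(x_1,x')\mapsto x'$, which on the group level corresponds to the quotient homomorphism $\mathsf{L}(n)\twoheadrightarrow \Euc(n)=\O(n)\ltimes\R^n$. Therefore $\pi\circ D$ can be viewed as the developing map of a ``transverse'' $(\Euc(n),\R^n)$-structure on $\tilde L$, whose level set through any $p\in\tilde L$ is a union of $\mathcal{V}$-leaves. The decisive structural point is that the linear holonomy of this transverse structure lies in the compact group $\O(n)$ and is therefore distal, putting us exactly in the setting of Theorem \ref{Introduction: Theorem 1}.

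Running the distal argument of Theorem \ref{Introduction: Theorem 1} at this transverse level, I expect to obtain that $\pi\circ D \colon \tilde L \to \R^n$ is a surjective submersion. The $\mathcal{V}$-hypothesis then enters: on the simply-connected $\tilde L$, the parallel line field tangent to $\mathcal{V}$ trivializes to a parallel vector field $V$, and $\mathcal{V}$-completeness on $L$ is equivalent to $V$ being a complete vector field on $\tilde L$. Since $\tilde L$ is simply connected and $V$ is nonvanishing, no $V$-orbit can be closed (otherwise a simple closed curve in $\tilde L$ would be sent by $D$ onto a closed affine line in $\R^{n+1}$, which is absurd), so the flow of $V$ is free; its orbits are the connected components of the fibers of $\pi\circ D$, and $D$ sends each such orbit diffeomorphically onto the corresponding line $\{x'=\text{const}\}\subset\R^{n+1}$. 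As any principal $\R$-bundle over $\R^n$ is trivial, $\tilde L$ is thereby identified with $\R\times\R^n = \R^{n+1}$ via $D$, which is hence a diffeomorphism; this is precisely $(\mathsf{L}(n),\R^{n+1})$-completeness of $L$.

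The main obstacle is the first step above, namely proving transverse completeness of $\pi\circ D$. The leaf space $M/\mathcal{V}$ is in general not a smooth manifold, so one cannot naively apply Theorem \ref{Introduction: Theorem 1} to a quotient. The distal argument has to be re-run directly on $(M,\mathfrak{F})$, using the transverse $\Euc(n)$-holonomy together with compactness of $M$ itself (not of any putative quotient) to rule out transverse escape in finite time via the distality of $\O(n)$. Once this is secured, the combination with the $\mathcal{V}$-hypothesis sketched in the previous paragraph is essentially automatic.
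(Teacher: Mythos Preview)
Your proposal is correct and follows essentially the same route as the paper. The paper's argument is precisely that the Riemannian submersion machinery from Theorem \ref{Introduction: Theorem 1} goes through unchanged for $\mathsf{L}(n)$ (the degenerate Riemannian metric $h$ is still $\mathsf{L}(n)$-invariant, so the transverse Euclidean structure and the submersion $\pi = \mathfrak{p}\circ \Dev$ are exactly as before), yielding injectivity of $\Dev$ and surjectivity onto the space of $\mathcal{V}$-leaves; the only missing ingredient is surjectivity along each $\mathcal{V}$-line, which is supplied precisely by the hypothesis of $\mathcal{V}$-geodesic completeness. Your ``main obstacle'' is therefore not an obstacle at all: the transverse argument does not need a quotient $M/\mathcal{V}$, it runs on $M$ itself via the pulled-back degenerate metric $h_M$ and the auxiliary Riemannian metric, exactly as in the proof of Theorem \ref{Introduction: Theorem 1}. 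One small point to tighten: your passage from ``surjective submersion with complete free $\R$-action'' to ``principal $\R$-bundle'' implicitly uses that the fibers of $\pi\circ D$ are connected, which in the paper comes from Hermann's theorem (completeness of the Riemannian total space makes the submersion a locally trivial fiber bundle) together with contractibility of the base $\R^n$ and simple-connectedness of $\tilde L$.
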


Incompleteness may occur in Theorem \ref{Introduction: Theorem 2}: taking the product of a Hopf circle and a Euclidean torus  (Example \ref{Example: affine L-str}) defines such an example.
\subsubsection{\textbf{Pseudo-Riemannian variants.}}
A natural generalization of the above situation is the following pseudo-Riemannian variant of the $(\L_\u(n,0),\R^{n+1})$-geometry. Let $p+q=n$, and define $$\L_\u(p,q):=\left\{\begin{pmatrix}
1 & \alpha^{\top} \\
0 & A 
\end{pmatrix}\ltimes \R^{p+q+1} \ | \  \alpha\in \R^{p+q}, \  A\in \O(p,q)\right\},$$
where $\mathsf{O}(p,q)$ is the orthogonal group of the usual pseudo-Riemannian  scalar product on $\R^{p+q}$ of signature $(p,q)$. One can define similarly $\L(p,q)$, the pseudo-Riemannian analogue of $\L(n)$.\medskip

\begin{example}[Higher-index pp-waves]
 There are also natural examples of $(\L_\u(p,q), \R^{p+q+1})$-foliated manifolds, given by higher-index pp-waves of signature $(p+1,q+1)$. A higher-index pp-wave can be defined similarly to the Lorentzian case as a pseudo-Riemannian manifold of signature $(p+1,q+1)$, admitting a lightlike parallel vector field $V$, such that the distribution $V^\perp$ is flat with respect to the induced connection. As in the Lorentzian case, they admit Brinkmann local coordinates (the proof in \cite[Paragraph 4.2.4]{mzkundt} holds independently of the signature) of the form
\begin{align*}\label{Eq: pp-waves coordinates}
    2 du dv + H(u,x) du^2 + \sum_1^p dx_i^2 - \sum_1^q dx_i^2.
\end{align*}
Those spaces possess a natural tangential $(\L_\u(p,q), \R^{p+q+1})$-structure along the foliation tangent to $V^\perp$. In Lorentzian signature $q+1=1$, the isometries preserving the foliation act as automorphisms of the $\L(p,q)$-structure on the leaves, since in this case $V$ is the unique (up to scale) parallel null vector field tangent to the foliation. This is no longer true in higher index, and this is a fundamental difference between the Lorentzian and higher index pp-waves from the point of view of geometric structures. 
\end{example}
\begin{example}[A compact $\L_\u(1,1)$-foliated manifold]
    In dimension $4$, there is a special example of a symmetric pp-wave of index $2$,  given by the hyperbolic oscillator group $\Osc_s:=\R\ltimes \Heis_3$, where the $\R$-action acts trivially on the center $Z$ of $\Heis_3$,  and  by hyperbolic transformations on $\Heis_3/Z$. The hyperbolic oscillator has a bi-invariant metric $g_{\Osc_s}$ of signature $(2,2)$. Considering the space $(\Osc_s,g_{\Osc_s})$ with its full isometry group gives what we call the hyperbolic oscillator geometry. The latter geometry admits a unique (up to scale) parallel lightlike vector field $V$, and the orthogonal distribution $V^\perp$ is flat. So, by taking a cocompact lattice $\Gamma$ of  $\Osc_s$, one obtains a compact (locally symmetric) $\L_\u(1,1)$-foliated manifold $\Osc_s/\Gamma$. The $4$-dimensional symmetric pp-waves of index $2$ have been recently explored in \cite{Matea, kath2024pseudo}, where the authors compute the full isometry group and study the existence of compact complete manifolds modeled on them.
\end{example}

\medskip
The famous Markus conjecture states that an affine structure on a compact manifold is complete as soon as it admits a parallel volume form. The $(\L_\u(p,q), \R^{p+q+1})$ geometry and its dual are part of this conjecture, and therefore, compact $(\L_\u(p,q), \R^{p+q+1})$-manifolds (and their duals) are conjectured to be complete.
However, it turns out that the foliated completeness Problem \ref{Prob: foliated problem 1} fails in this case. We provide examples of compact $(\L_\u(p,q), \R^{p+q+1})$-foliated and $(\L_\u^*(p,q), \R^{p+q+1})$-foliated manifolds (for any $p, q \geq 1$) with incomplete leaves (Example \ref{Example: foliated L_u(1,1)-incomplete} and Example \ref{Example: foliated L_u*(1,1)-incomplete}). The issue arises from the existence of compact manifolds with foliations that admit  a tangential flat Lorentzian structure, such that the leaves are incomplete. Or, in other words, from the fact that there is no foliated version of Carri\`ere's theorem on the completeness of compact flat Lorentzian manifolds, as it is observed in Example \ref{Example: incomplete flat Lor}.
\medskip

For the dual geometry, with $q=1$, we show
\begin{proposition}
      Let $M$ be a compact $(\L_\u^*(n-1,1), \R^{n+1})$-manifold. Then the developing map is 
 injective; in particular, $M$ is Kleinian. If moreover, the fundamental group of $M$ is virtually solvable, then $M$ is complete.
\end{proposition}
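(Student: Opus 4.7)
The parallel $1$-form $\omega$ is nowhere vanishing, and on the simply connected cover $\tilde M$ it is exact, $\tilde\omega = df$, where $f$ coincides (up to a constant) with the last coordinate $y\circ\Dev$ of the developing map. I will pick a vector field $X$ on $M$ with $\omega(X)=1$ (automatically complete by compactness of $M$) and lift it to $\tilde X$ on $\tilde M$; the flow of $\tilde X$ trivializes $f$ and gives a diffeomorphism $\tilde M \cong \tilde L_0 \times \R$. In particular, $f$ is a surjective submersion and each level set $\tilde L_t := f^{-1}(t)$ is simply connected. If $\Dev(p)=\Dev(q)$, then $f(p)=f(q)=:t_0$, so $p$ and $q$ lie in the same leaf $\tilde L_{t_0}$; hence the injectivity of $\Dev$ reduces to that of the restriction $\Dev|_{\tilde L_t}\colon \tilde L_t \to \{y=t\}\cong \R^{n-1,1}$, which is a local isometry of simply connected flat Lorentzian manifolds into Minkowski space. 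Since any simply connected, geodesically complete flat Lorentzian manifold is isometric to Minkowski via its developing map, it suffices to establish geodesic completeness of each leaf $\tilde L_t$ with respect to the induced flat Lorentzian structure.

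Introduce the period homomorphism $c\colon \pi_1(M)\to\R$, $\gamma\mapsto \int_\gamma \omega$, equivalently the $y$-translation component of the holonomy. It is nontrivial, since otherwise $f$ would descend to a submersion from the compact manifold $M$ to $\R$, which is impossible. If $\mathrm{Im}(c)$ is discrete, then $f$ descends to a submersion $M\to \S^1$, which is a fibration by Ehresmann's lemma, so the leaves of $\mathfrak{F}$ are compact; Carri\`ere's theorem on the completeness of compact flat Lorentzian manifolds then yields $\tilde L_t \cong \R^{n-1,1}$ and the required bijectivity of $\Dev|_{\tilde L_t}$. The hard case is when $\mathrm{Im}(c)$ is dense: the leaves are non-compact and Carri\`ere does not apply directly. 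My plan is to use that $\mathfrak{F}$ is affinely totally geodesic (since $\omega$ is parallel), so that leaf geodesics coincide with ambient affine geodesics, and then to approximate $\omega$ by rational closed $1$-forms \`a la Tischler in order to reduce to the compact-leaves case via a continuity argument on the flat Lorentzian structure along the nearby foliations. This dense-period case is the main technical obstacle.

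For the second statement, the linear part of $\L_\u^*(n-1,1)$ is $\O(n-1,1)\ltimes\R^n$, whose elements all have determinant $\pm 1$; hence the $(\L_\u^*(n-1,1),\R^{n+1})$-structure on $M$ (or on its orientation double cover) induces a parallel volume density. When $\pi_1(M)$ is virtually solvable, the Markus conjecture is known by Fried--Goldman's theorem on affine structures with virtually polycyclic holonomy, yielding geodesic completeness of $M$. Together with the simple connectedness of $\R^{n+1}$, this upgrades the Kleinian-ness from the first part to a global diffeomorphism $\Dev\colon \tilde M \to \R^{n+1}$ and identifies $M = \rho(\pi_1(M))\backslash \R^{n+1}$ as a properly discontinuous quotient.
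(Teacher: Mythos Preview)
Your overall architecture for injectivity matches the paper: reduce to injectivity of $\Dev$ on each leaf $\tilde L_t$ of the flat Lorentzian foliation, then use the parallel $1$-form and a transverse complete vector field to globalize. The gap is precisely where you flag it. You try to obtain leafwise injectivity by proving each leaf is geodesically \emph{complete}, and you leave the dense-period case open with a Tischler-type approximation plan. This strategy is both stronger than needed and unlikely to succeed as stated: the paper itself exhibits (Example~4.4) a compact manifold with a tangential flat Lorentzian foliation whose leaves are \emph{incomplete}, so a ``foliated Carri\`ere theorem'' in the sense of leaf completeness is false in general, and a naive continuity argument for completeness under perturbation of the foliation will not go through. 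The right tool is Carri\`ere's discompacity argument for codimension~$1$ totally geodesic flat Lorentzian foliations of compact manifolds (Th\'eor\`eme~3.2.1 in \emph{Autour de la conjecture de L.~Markus\dots}), which yields injectivity of the developing map on each leaf directly, without establishing completeness. That is exactly what the paper invokes, and it bypasses the discrete/dense period dichotomy entirely.

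For the second statement your invocation of ``Fried--Goldman'' is too quick. The result you need (Goldman--Hirsch, Corollary~3.8) requires, in addition to parallel volume and virtually polycyclic holonomy, that $\mathrm{rank}(\pi_1(M))=\dim M$, equivalently that $M$ is aspherical. You have shown $\tilde M\cong \tilde L_0\times\R$, so asphericity reduces to contractibility of the simply connected leaf $\tilde L_0$; but this is again a nontrivial input, supplied in the paper by Carri\`ere's Proposition~3.2.2 (leaves of such foliations have contractible universal cover). Without it the rank hypothesis is unverified. The paper also uses the injectivity of the holonomy representation (from the first part) together with Milnor's lemma to upgrade ``virtually solvable'' to ``virtually polycyclic'', which you should make explicit.
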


\medskip

\paragraph{\textbf{Organization of the paper}} The organization of the paper is different from the order presented in the introduction. Section \ref{section: (G,X)-foliated completeness problem: affine unimodular lightlike geometry} deals with the $(G,X)$-foliated completeness problem  in Item \textbf{(B)}, in the case of the affine unimodular lightlike geometry and its dual. We prove Theorem \ref{Introduction: Theorem 1} and Theorem \ref{Introduction: Theorem 1*}. Then we study the case of the affine lightlike geometry in Section \ref{section: (G,X)-foliated completeness problem: affine lightlike geometry}: we prove Theorem \ref{Introduction: Theorem 2}, and give leafwise-incomplete examples. Section \ref{section: L_u(k,1) geometry and its dual} deals with the pseudo-Riemannian variants of the previous structures. In Section \ref{section: Foliated lightlike geometry: leafwise completeness}, we deal with the foliated completeness problem in Item \textbf{(A)}, and prove Theorem \ref{Theorem: completeness of lightlike tot geod foliation}. Finally, Section \ref{section: On completeness of compact Lorentzian manifolds with a null Killing field} is devoted to the study of the global completeness question in the presence of a null Killing field, focusing on dimension $3$ in Section \ref{section: Dimension 3: an example and a problem}. 
\bigskip

\paragraph{\textbf{Acknowledgment}} We would like to warmly thank Abdelghani Zeghib and Souheib Allout for the helpful discussions and valuable comments on this work. The first author is grateful to Ines Kath for her support, encouragement, and for useful discussions and remarks on early drafts of this paper.
The second author would like to warmly thank Miguel Sanchez Caja for the helpful discussions. Finally, we thank the referee for the valuable comments and suggestions that helped improve the quality of the presentation.\\
The second author is supported by the grants, PID2020-116126GB-I00\\
(MCIN/ AEI/10.13039/501100011033), and the framework IMAG/ Maria de Maeztu,\\ 
CEX2020-001105-MCIN/ AEI/ 10.13039/501100011033.

\section{$(G,X)$-foliated completeness problem: affine unimodular lightlike geometry}\label{section: (G,X)-foliated completeness problem: affine unimodular lightlike geometry}

\subsection{\textbf{$(G,X)$-structures.}}
Let $G$ be a Lie group, and $X$ a space on which $G$ acts transitively by real analytic diffeomorphisms. Then  $X=G/H$, where $H$ is a closed subgroup of $G$. A $(G,X)$-structure on a smooth manifold $M$ is an atlas on $M$, where the chart transitions are restrictions of elements of $G$. A manifold with a $(G, X)$-structure is called a $(G, X)$-manifold. A $(G, X)$-manifold $M$ gives rise to a local diffeomorphism $\Dev$, called the developing map, from the universal cover to the model space $X$, along with a representation $\rho: \pi_1(M) \to G$, called the holonomy representation (see  \cite[p. 100]{goldman2022geometric} or  \cite[p. 34]{thurston2022geometry}). The developing map and the holonomy representation satisfy the following equivariance property $$\Dev(\gamma x) =\rho(\gamma)\Dev(x),$$ for any $x\in \widetilde{M}$ and $\gamma \in \pi_1(M)$.
\begin{definition}[Completeness in the sense of $(G, X)$-structures]
    Let $M$ be a $(G, X)$-manifold. We say that $M$ is complete if the developing map $\Dev: \widetilde{M} \to X$ is a covering map.
\end{definition}
\begin{fact}\label{Fact: complete=complete}
    Let $M$ be $(G,X)$-manifold. Assume that $X$ is complete with respect to a $G$-invariant connection. Then $M$ is geodesically complete (with respect to the ``pullback'' connection) if and only if $M$ is $(G,X)$-complete.
\end{fact}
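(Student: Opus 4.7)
The plan is to reduce the statement to a standard fact in affine geometry: a connection-preserving local diffeomorphism from a geodesically complete connected manifold to a connected affine manifold is automatically a covering map. Once this is available, both implications fall out by applying it to the developing map.

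First I would unwind the definitions. The $G$-invariant connection on $X$ descends through the $(G,X)$-atlas to a well-defined ``pullback'' connection $\nabla_M$ on $M$, and it lifts further along $\pi\colon \widetilde{M}\to M$ to a connection $\nabla_{\widetilde{M}}$ with respect to which both $\pi$ and the developing map $\Dev\colon\widetilde{M}\to X$ are local affine diffeomorphisms. Since $\pi$ is already a covering, $M$ is geodesically complete if and only if $\widetilde{M}$ is. The statement therefore reduces to showing that, granted the completeness of $X$, the universal cover $\widetilde{M}$ is geodesically complete if and only if $\Dev$ is a covering map.

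For the implication ``$\Dev$ covering $\Rightarrow$ $\widetilde{M}$ complete'', any maximal geodesic $\gamma$ in $\widetilde{M}$ pushes forward to a geodesic $\Dev\circ\gamma$ in $X$, which extends to all of $\R$ by completeness of $X$; the unique path-lifting property of the covering $\Dev$ then lifts this extension back to $\widetilde{M}$, so $\gamma$ itself is defined on $\R$. For the converse, I would invoke the classical fact that if $f\colon N\to X$ is an affine local diffeomorphism from a geodesically complete connected manifold, then $f$ is a surjective covering: around each $x\in X$ one picks a normally convex neighborhood $U$ on which $\exp_x$ is a diffeomorphism from a star-shaped open subset of $T_x X$, and for each $y\in f^{-1}(x)$ the completeness of $N$ ensures that $\exp_y$ is defined on all of $T_yN$, so the slice $\exp_y\bigl((df_y)^{-1}\exp_x^{-1}(U)\bigr)$ is mapped diffeomorphically onto $U$ by $f$; uniqueness of geodesics then shows that these slices partition $f^{-1}(U)$. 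Applied to $f=\Dev$, this yields the $(G,X)$-completeness of $M$.

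The real content is concentrated in this last classical lemma, which is also where the assumption that $X$ carries a complete $G$-invariant connection is genuinely used; the rest is bookkeeping with developing maps. I would either cite Kobayashi--Nomizu for this step or briefly recall the slice construction sketched above.
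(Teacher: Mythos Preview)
The paper states this result as a \emph{Fact} and offers no proof; it is treated as folklore. Your argument is the standard one and is correct: reduce to the universal cover, then use path-lifting for one direction and the classical ``affine local diffeomorphism from a complete source is a covering'' lemma (Kobayashi--Nomizu, Vol.~I, Ch.~IV) for the other.

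One small inaccuracy worth flagging: you say the completeness of $X$ is ``genuinely used'' in the classical lemma, but in fact that lemma only requires the \emph{source} $\widetilde{M}$ to be complete (normal convex neighborhoods exist on any affine manifold, complete or not). The completeness of $X$ is used in the \emph{other} implication, ``$\Dev$ covering $\Rightarrow$ $\widetilde{M}$ complete'': you need the pushed-forward geodesic in $X$ to be defined on all of $\R$ before you can lift it back. So the two hypotheses (completeness of $\widetilde{M}$, completeness of $X$) each power one direction, not the same one.
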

\subsection{Tangential $(G,X)$-foliation} We introduce here the notion of a $(G,X)$-foliated manifold. A smooth manifold $M$ with a foliation $\mathfrak{F}$ is referred to as a foliated manifold, denoted by the pair $(M,\mathfrak{F})$.

\begin{definition}[\textbf{Tangential $(G,X)$-foliation}]\label{Defi: GX foliation}
We say that a manifold $M$ has a tangential $(G,X)$-foliation if $M$ is a foliated manifold such that the leaves of $\mathfrak{F}$ are modeled on $(G,X)$. More precisely, for every $p\in M$, there exists an open neighborhood  $U$ and  a diffeomorphism (into the image) $\varphi: U\to  \R^k\times X$ such that every leaf of the foliation of $U$ (induced by $\mathfrak{F}$) is mapped to a level $(t,V_t)\subset \R^k\times X$. And if $(\varphi_i, U_i)$ and $(\varphi_j, U_j)$ are two such maps, the transition map $\varphi_{ij}:=\varphi_j\varphi_i^{-1}$ has the following form $(\psi(t), g_t(x))$ where $g_t\in G$ and $x \in V_t$ (compare \cite{furness1974affine, inaba1993tangentially, malakhal2002foliations}). 
\end{definition}
\medskip

\noindent \textbf{Convention:} We call a \textbf{$(G,X)$-foliated manifold} a manifold $M$ having a tangential $(G,X)$-foliation, as defined above.
\medskip

In addition to the examples provided in the introduction, we present here some natural examples of foliations with a tangential structure:  
 \begin{example}[Vector fields]
The data of a non-vanishing vector field on $M$ is equivalent to giving a $1$-dimensional foliation with a tangential  $(\R,\R)$-structure. The structure on the leaves is complete if and only if the vector field is complete. This is the case when $M$ is compact. 
 \end{example}
  \begin{example}[Lightlike foliations of Lorentzian surfaces]
   An orientable Lorentzian surface has two totally geodesic lightlike foliations. Each foliation naturally carries a tangential affine structure, modeled on $(\Aff(\R), \R)$. The local affine parameter is given by the geodesic parameter. The completeness of this affine structure is equivalent to the completeness of the lightlike geodesics tangent to the foliation. 
 \end{example}
  \begin{example}[A tangential Minkowski structure]
Consider $\R^2$ with the action of the hyperbolic matrix 
  $\varphi:=\begin{pmatrix}
         2& 1\\
         1 & 1
    \end{pmatrix}$ (this means that it has two distinct real eigenvalues). There is a Lorentzian flat metric on $\R^2$ preserved by $\varphi$ (take two $1$-forms $du$ and $dv$ defining the two eigendirections of $\varphi$, and set $g:=dudv$).   
    Define the flat $2$-torus $\T^2:=\R^2/\Z^2$, with $\overline{\varphi}$ the induced action of $\varphi$.  The mapping torus $\T_\varphi:= \T^2 \times [0,1]/(x,0)\sim (\overline{\varphi}(x),1)$ has a natural foliation with a tangential Minkowski structure, whose leaves are given by the  $\mathbb{T}^2$-fibers.
 \end{example}
\begin{example}[A tangential hyperbolic structure]
   Consider the group $\Sol\cong \R\ltimes \R^2$, with $\R$ acting on $\R^2$ by linear hyperbolic transformations
   $\begin{pmatrix}
       e^t & 0\\
       0 & e^{-t}
   \end{pmatrix}.$
   We have a short exact sequence $\Aff(\R)\to \Sol \to \R$. The left action of $\Aff(\R)$ on $\Sol$ defines a foliation $\mathfrak{F}$ with a tangential $(\Aff(\R), \Aff(\R))$-structure. 
   On the other hand, $\Aff(\R)$ acts simply transitively on the hyperbolic plane $\H^2$, inducing a natural hyperbolic structure on the leaves of $\mathfrak{F}$. The foliation is right invariant, so by taking a cocompact lattice $\Gamma$ of $\Sol$, we obtain  a compact manifold $\Sol/\Gamma$ with a tangential hyperbolic foliation. 
\end{example}
\begin{example}(Tangential $(G,G)$-foliations)
    This is defined similarly to the previous example. Let $L$ be a connected Lie group, and $G$  a connected closed subgroup of $L$. Then $L$ is naturally a $(G,G)$-foliated manifold. We obtain a compact $(G,G)$-foliated manifold if $L$ has a cocompact lattice. 

\end{example}
\noindent \textbf{Pullback of $G$-invariant tensor fields.} Consider $(M,\mathfrak{F})$ a $(G,X)$-foliated manifold. Let $T$ be a tensor field on $X$ which is $G$-invariant. One can associate a tensor field $T_M$ on $M$ in the following way. For any point $p \in M$, we take a chart around $p$ and we pullback the tensor field $T$ in a neighborhood of $p$. Since the transition maps, when restricted to a leaf, are restrictions of elements of $G$, and $T$ is $G$-invariant, we get a well defined tensor field $T_M$ on $M$. Moreover, by construction, $T_M$ is continuous on $M$. We will refer to $T_M$ as the ``pullback'' of $T$.
\begin{remark}[Leafwise completeness: tautological cases]\label{Remark: leafwise completeness trivial cases}
    There are two situations, one geometric and the other topological, in which any compact $(G,X)$-foliated manifold $(M, \mathfrak{F})$ is leafwise $(G,X)$-complete. The first situation is when the stabilizer in $G$ of some point in $X$ is compact; equivalently, $X$ admits a $G$-invariant Riemannian metric. In this case, taking the pullback of this metric defines a Riemannian metric on $T\mathfrak{F}$. Then, the completeness follows from Fact \ref{Fact: complete riemannian leaf}. The other situation is when all the leaves are simply connected, or more generally, when the holonomy representation of any leaf is trivial. Then $T\mathfrak{F}$ also supports a Riemannian metric, since the transition maps of the geometric structure are just restrictions of the identity map of $G$, and the leaves are therefore $(G,X)$-complete. 
\end{remark}

\subsection{\textbf{Affine unimodular lightlike geometry.}}
Recall that  the affine unimodular lightlike geometry is the one modeled on $(\mathsf{L}_\u(n), \R^{n+1})$, where $\mathsf{L}_\u(n)$ is the subgroup of $\Aff(\R^{n+1})$ preserving  the degenerate Riemannian metric $h:=dx_2^2+dx_3^2+ \ldots +dx_{n+1}^2$\, and the null vector field $V :=\partial_{x_1}$. In this subsection,  $V$ and $h$ will always refer to these $\L_\u(n)$-invariant objects. 
The foliation by the leaves of $V$ will be denoted by $\mathcal{V}$.

\subsection{$(\mathsf{L}_\u(n), \R^{n+1})$-foliated completeness}
This subsection is devoted to the study of the foliated completeness problem. We  show the following 
\begin{theorem}\label{Theorem 1}
Let $(M,\mathfrak{F})$ be a $(\mathsf{L}_\u(n), \R^{n+1})$-foliated compact manifold. Then the leaves of $\mathfrak{F}$ are $(\mathsf{L}_\u(n), \R^{n+1})$-complete.
\end{theorem}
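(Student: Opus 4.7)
The plan is to pull back the $\mathsf{L}_\u(n)$-invariant tensors to $M$, reduce the general leaf-geodesic completeness question to the completeness of the orbits of the resulting parallel null vector field, and close the argument by compactness. Concretely, the model $\R^{n+1}$ carries two canonical $\mathsf{L}_\u(n)$-invariant parallel objects: the null vector field $V=\partial_{x_1}$ and the degenerate Riemannian metric $h=dx_2^{2}+\cdots+dx_{n+1}^{2}$, whose radical is $\R V$. Pulling them back along foliated charts produces a nowhere-vanishing vector field $V_M$ tangent to $\mathfrak{F}$ and a degenerate symmetric tensor $h_M$ on $T\mathfrak{F}$, both parallel for $\nabla_{\mathfrak{F}}$, with $\R V_M=\mathrm{rad}(h_M)$. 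This yields the $1$-dimensional subfoliation $\mathcal{V}$ of $\mathfrak{F}$ tangent to $V_M$.

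The first reduction is almost free. Since $M$ is compact, $V_M$ is a complete vector field, and parallelism of $V_M$ together with $\nabla_{V_M}V_M=0$ forces its integral curves to be affinely parametrized $\mathfrak{F}$-geodesics; hence all $\mathcal{V}$-geodesics are $(\mathsf{L}_\u(n),\R^{n+1})$-complete. Now take a maximal leaf-geodesic $\gamma:[0,T)\to L$. Parallelism of $h_M$ along $\gamma$ makes $c^{2}:=h_M(\gamma',\gamma')$ constant. If $c=0$ then $\gamma'$ lies in $\R V_M$ pointwise; writing $\gamma'=f(t)V_M$ and combining the geodesic equation with $\nabla V_M=0$ gives $f'\equiv 0$, so $\gamma$ is a reparametrization of a $V_M$-orbit, thus defined on all of $\R$.

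The substantive case is $c\neq 0$. Assuming $T<\infty$, I would lift $\gamma$ to $\tilde{\gamma}:[0,T)\to\widetilde{L}$ and develop it to a straight line $\Dev(\tilde{\gamma}(t))=p_0+tv_0$ in $\R^{n+1}$, which is of course defined for every $t\in\R$ in the target. By compactness of $M$, pick $t_n\to T^-$ with $\gamma(t_n)\to q\in M$, and choose a foliated chart $\varphi\colon U\to \R^{k}\times \R^{n+1}$ around $q$ in which plaques of $\mathfrak{F}$ are slices carrying the standard $\mathsf{L}_\u(n)$-geometry. The plan is then to track the successive plaques of $L$ on which the points $\gamma(t_n)$ lie, to compare them via the holonomy transitions (which take values in $\mathsf{L}_\u(n)$), and to use the fact that the linear part of $\mathsf{L}_\u(n)$ has spectrum on the unit circle: a bounded piece of the developed line near $t=T$ should therefore remain trapped in a bounded region of $\R^{n+1}$ under these identifications. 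Combining this with the recurrence of the $V_M$-flow on the compact manifold $M$ should give a genuine extension of $\gamma$ across $t=T$, contradicting maximality.

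The main obstacle is precisely this last step. The leaf $L$ may well be non-closed and accumulate on itself in $M$, so the plaques of $L$ meeting $U$ need not form a discrete family, and the sequence $\gamma(t_n)$ can lie on infinitely many distinct plaques. Organizing the $\mathsf{L}_\u(n)$-transitions between these plaques, and ruling out that any of them pushes the developed line off to infinity, is the real technical content; it is the foliated counterpart of the distality argument of Fried for compact affine manifolds, and identifying its cleanest formulation in the $(G,X)$-foliated framework is where I expect the difficulty to concentrate.
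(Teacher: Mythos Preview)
Your self-diagnosis is accurate: the gap you name at the end is real, and it is not merely a matter of bookkeeping. Two points make the distality-style argument genuinely hard to salvage in the foliated setting. First, ``spectrum on the unit circle'' for the linear part of $\mathsf{L}_\u(n)$ does \emph{not} give the boundedness you want: the matrices $\begin{pmatrix}1&\alpha^\top\\0&A\end{pmatrix}$ with $A\in\O(n)$ have all eigenvalues on the unit circle, but the unipotent block with $\alpha$ large moves points in the $x_1$-direction by arbitrarily much, so a bounded piece of the developed line need not stay in a bounded region under such identifications. Second, and more structurally, Fried's distality argument for compact affine manifolds uses that the holonomy group is a discrete subgroup of the structure group acting cocompactly on the developing image. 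For a single leaf $L$ of $\mathfrak{F}$ none of this is available: $L$ is typically non-compact, its fundamental group need not be finitely generated, and the transitions you see between plaques of $L$ in a chart around $q$ are not a priori contained in any nice subgroup. So the ``foliated Fried'' step is not a routine extension.

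The paper bypasses all of this with a different mechanism. It builds an auxiliary \emph{Riemannian} metric $g$ on $M$ whose restriction to $T\mathfrak{F}$ agrees with $h_M$ transversally to $V_M$; by compactness of $M$, each leaf $(F,g|_F)$ is a complete Riemannian manifold. Then, on the universal cover $\widetilde{F}$, the composition $\pi:=\mathfrak{p}\circ\Dev$, where $\mathfrak{p}:\R^{n+1}\to\R^n$ kills the $V$-direction, is checked to be a Riemannian submersion onto an open set $B\subset\R^n$ (Euclidean). Horizontal geodesics in $\widetilde{F}$ are complete and project to straight lines, which forces $B=\R^n$; Hermann's theorem then makes $\pi$ a fiber bundle over $\R^n$, hence trivial with connected fiber. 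The fiber is a single $\widetilde{\mathcal{V}}_M$-leaf, and on such a leaf $\Dev$ is a bijection onto a $\mathcal{V}$-leaf in $\R^{n+1}$ by equivariance with the (complete) flow of $V_M$ --- exactly the piece you already had. Injectivity and surjectivity of $\Dev$ on all of $\widetilde{F}$ follow immediately from the product structure. The point is that the Riemannian-submersion machinery converts compactness of $M$ into completeness of a genuine Riemannian metric on each leaf, which is what replaces the missing ``cocompact holonomy'' input in your sketch.
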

The proof, presented in Paragraph \ref{subsection: Proof of Theorem 1}, is based on the theory of Riemannian submersions, developed by B. O'Neil in \cite{o1966fundamental} and A. Grey in \cite{gray1967pseudo}.  In the next paragraph, we will recall the main results needed for the proof.

\subsubsection{\textbf{Prerequisites on Riemannian submersions}}\label{subsection: Riem sub}

Let $(E, g)$ and $(B, g')$ be smooth Riemannian manifolds, where $\dim E \geq\dim B$. Let $\pi: E\to B$ be a smooth submersion, the regular value theorem implies that the fiber $\pi^{-1}(x)$ of any point $x\in B$ is a closed smooth submanifold of $E$ of dimension $\dim E -\dim B$. Let $p\in E$, we denote by $\mathcal{K}_p$ the tangent space at $p$ of the fiber of $\pi$, and $\mathcal{H}_p$ its orthogonal complement at $p$ with respect to the metric $g$. Thus, we can decompose the tangent space at $p$ as $T_pE= \mathcal{K}_p\oplus\mathcal{H}_p$. The distribution $\mathcal{K}$ is called the \textit{vertical} distribution, it is integrable, and the leaves are the fibers of $\pi$. The distribution $\mathcal{H}$ is called the \textit{horizontal} distribution; generically, it is far from being integrable. 
\begin{definition}
    Let $\pi: (E, g) \to (B, g')$ be a smooth submersion, we say that $\pi$ is a \textit{Riemannian submersion} if the differential of $\pi$ induces an isometry from the horizontal distribution to the tangent space of $B$. In other words, $d_p\pi$ is an isometry from $\mathcal{H}_p$ to $T_{\pi(p)}B$.
\end{definition}
Consider $\pi: (E, g) \to(B, g')$  a Riemannian submersion, and $\mathcal{H}$ the corresponding horizontal distribution. A vector $v\in T_pM$ is said to be horizontal if $v\in \mathcal{H}_p$. And a curve $\gamma: I\to E$ is said to be horizontal if for all $t\in \R$, $\gamma'(t)$ is a horizontal vector, i.e. $\gamma'(t)\in \mathcal{H}_{\gamma(t)}$.\medskip

We now state three facts that will be used throughout the section. 

\begin{fact}[O'Neil, \cite{pastore2004Riemannian} Proposition 1.10]\label{Fact: horizontal}
Let $\gamma: I\to E$ be a geodesic. If the tangent vector $\gamma'(t_o)$ is horizontal, then $\gamma$ is horizontal. In other words, the horizontal distribution is totally geodesic.
\end{fact}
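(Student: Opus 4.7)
My plan is to reduce the statement to the construction of a horizontal geodesic from a horizontal lift, and then invoke uniqueness of solutions of the geodesic ODE. The key ingredient is O'Neill's fundamental formula for the Levi-Civita connection of a Riemannian submersion.

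First I would establish the following identity: for any two horizontal \emph{basic} vector fields $X, Y$ on $E$ (i.e., horizontal fields that are $\pi$-related to vector fields $X', Y'$ on $B$),
\begin{equation*}
\nabla_X Y \;=\; \overline{\nabla'_{X'} Y'} \;+\; \tfrac{1}{2}\,\mathcal{K}[X,Y],
\end{equation*}
where $\overline{(\cdot)}$ denotes the horizontal lift and $\mathcal{K}(\cdot)$ denotes the vertical component with respect to the splitting $TE = \mathcal{K} \oplus \mathcal{H}$. This is obtained from Koszul's formula for $2\,g(\nabla_X Y, Z)$ by separating the cases where $Z$ is horizontal basic or vertical. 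In the first case, the Riemannian submersion hypothesis identifies $g(X,Y)$ with $g'(X',Y')\circ\pi$ and transports the computation down to Koszul's formula on $(B,g')$; in the second case, the three directional derivatives $X\cdot g(Y,Z)$, $Y\cdot g(Z,X)$, $Z\cdot g(X,Y)$ vanish because $g$ pairs basic horizontal against vertical to zero and because $g(X,Y)$ is constant along fibers.

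Next, given the horizontal initial vector $X_0 := \gamma'(t_0) \in \mathcal{H}_{\gamma(t_0)}$, let $c$ be the geodesic in $B$ with $c(0)=\pi(\gamma(t_0))$ and $c'(0)=d_{\gamma(t_0)}\pi(X_0)$, and let $\tilde c$ be the horizontal lift of $c$ to $E$ starting at $\gamma(t_0)$ (a standard local ODE argument on the horizontal distribution provides such a lift). Extending $\tilde c'$ to a basic horizontal vector field $X$ in a neighborhood (by horizontally lifting a smooth extension of $c'$) and applying the formula above with $Y = X$, the bracket term $\tfrac{1}{2}\mathcal{K}[X,X]$ vanishes trivially, so $\nabla_X X$ is horizontal and projects via $d\pi$ to $\nabla'_{X'}X'$; the latter vanishes along $c$ because $c$ is a geodesic of $B$. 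Hence $\nabla_{\tilde c'}\tilde c' = 0$ along $\tilde c$, i.e., $\tilde c$ is a geodesic of $(E,g)$ with $\tilde c'(0)=X_0$.

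Finally, by uniqueness of geodesics with prescribed initial conditions, $\gamma$ and $\tilde c$ coincide on a neighborhood of $t_0$ (after the obvious parameter shift). Since $\tilde c$ is horizontal by construction, the set $\{t\in I : \gamma'(t)\in\mathcal{H}_{\gamma(t)}\}$ is open; by continuity of the orthogonal projection onto $\mathcal{K}$ it is also closed in $I$, and by hypothesis it contains $t_0$. Thus it equals $I$ and $\gamma$ is horizontal everywhere on its domain. The main technical obstacle is the verification of O'Neill's formula in the first step, which is a careful but standard bookkeeping with Koszul's identity, the notion of basic field, and the Riemannian submersion condition; once that formula is available, the rest of the argument is a direct application of uniqueness for the geodesic ODE.
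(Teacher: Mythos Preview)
Your argument is correct and is essentially the standard proof of this result (O'Neill's formula for basic horizontal fields, horizontal lift of a base geodesic, uniqueness of geodesics, and an open--closed argument). Note, however, that the paper does \emph{not} supply its own proof of this statement: it is stated as a cited fact with a reference to \cite{pastore2004Riemannian}, Proposition~1.10, and used as a black box in the subsequent arguments. So there is no proof in the paper to compare against; what you have written is precisely the kind of proof one finds in the cited source.
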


\begin{fact}[Corollary 1.1 page 26, \cite{pastore2004Riemannian}]\label{Fact: geodesic}
Let $\pi: (E, g) \to (B, g')$ be a Riemannian submersion.
Consider $\gamma: I\subset \R \to E$ a parameterized horizontal curve with constant speed, i.e. for all $t\in I$ we have $\gamma'(t)\in \mathcal{H}_{\gamma(t)}$ and $g_{\gamma(t)}(\gamma'(t), \gamma'(t))=1$. Then, $\gamma$ is a geodesic in $E$ if and only if $\pi\circ \gamma$ is a geodesic in $B$.
\end{fact}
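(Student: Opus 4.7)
The plan is to deduce both implications simultaneously from O'Neill's fundamental equation for Riemannian submersions, which yields a pointwise identity between the $E$-acceleration of $\gamma$ and the horizontal lift of the $B$-acceleration of $\pi \circ \gamma$.

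Near an arbitrary $t_0 \in I$, I would use that $\gamma'(t_0)$ is horizontal and $\pi$ is a submersion to extend $(\pi \circ \gamma)'$ to a smooth vector field $X$ on an open neighborhood $U \subset B$ of $\pi(\gamma(t_0))$. Its horizontal lift $\widetilde{X}$ is then a smooth horizontal vector field on $\pi^{-1}(U)$ which is $\pi$-related to $X$ and agrees with $\gamma'$ along $\gamma$ for $t$ close to $t_0$. O'Neill's formula for $\pi$-related horizontal fields, applied to the pair $(\widetilde{X}, \widetilde{X})$, specializes to
\[
\nabla^E_{\widetilde{X}} \widetilde{X} \;=\; \widetilde{\nabla^B_X X} \;+\; \tfrac{1}{2}\,[\widetilde{X},\widetilde{X}]^{\mathcal{K}} \;=\; \widetilde{\nabla^B_X X},
\]
since the bracket $[\widetilde{X},\widetilde{X}]$ vanishes identically. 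Restricting to $\gamma$ yields the pointwise identity
\[
\nabla^E_{\gamma'(t)} \gamma'(t) \;=\; \widetilde{\nabla^B_{(\pi\circ\gamma)'(t)} (\pi\circ\gamma)'(t)}
\]
for all $t$ in a neighborhood of $t_0$.

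Since the horizontal lift is a linear isomorphism $T_{\pi(p)}B \to \mathcal{H}_p$ at each point $p$, the left-hand side vanishes identically in $t$ if and only if $\nabla^B_{(\pi\circ\gamma)'} (\pi\circ\gamma)' \equiv 0$. As $t_0$ was arbitrary, this gives the desired equivalence between $\gamma$ being a geodesic in $E$ and $\pi \circ \gamma$ being a geodesic in $B$. The constant-speed hypothesis plays no essential role in this argument: it only serves to ensure that the affine parametrization of $\gamma$ matches that of $\pi\circ\gamma$, which is automatic once $\pi$ restricted to horizontal vectors is an isometry and $\gamma'$ has constant norm.

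The main obstacle is the invocation of O'Neill's formula, whose derivation rests on the Koszul formula together with the orthogonal splitting $TE = \mathcal{H} \oplus \mathcal{K}$ and the fact that brackets of horizontal lifts are $\pi$-related to their counterparts on $B$ modulo a vertical correction. This is standard and may be cited directly. A slightly longer but more elementary route would first establish only the sublemma that any horizontal $E$-geodesic projects to a $B$-geodesic (which is exactly the identity above), and then derive the converse from Fact \ref{Fact: horizontal}: given $\gamma$ horizontal with $\pi \circ \gamma$ a $B$-geodesic, the $E$-geodesic $\sigma$ with $\sigma'(t_0) = \gamma'(t_0)$ remains horizontal by Fact \ref{Fact: horizontal}, so $\pi \circ \sigma$ and $\pi \circ \gamma$ are two $B$-geodesics with the same initial data, hence equal; uniqueness of horizontal lifts then forces $\sigma = \gamma$.
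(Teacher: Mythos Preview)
Your argument is correct. The paper does not supply its own proof of this fact; it is stated as a citation to \cite{pastore2004Riemannian} (Corollary~1.1, page~26) and used as a black box in the proof of Theorem~\ref{Theorem 1}. Your derivation via O'Neill's formula $\nabla^E_{\widetilde{X}}\widetilde{X} = \widetilde{\nabla^B_X X}$ is the standard one and is essentially what the cited reference does. The extension of $(\pi\circ\gamma)'$ to a local field on $B$ is legitimate because the unit-speed hypothesis forces $(\pi\circ\gamma)'(t_0)\neq 0$, so $\pi\circ\gamma$ is a local embedding near $t_0$. Your closing remark that the constant-speed hypothesis is inessential to the equivalence is also correct: it is only needed to guarantee the nonvanishing just mentioned, and to match parametrizations.
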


\begin{fact}[Proposition 3.2 \cite{hermann1960sufficient}]\label{Cor: Hermann Fibration }
 Let $\pi: (E, g) \to (B, g')$ be a Riemannian submersion. If the total space $E$ is complete, then $E$ is a fiber bundle over $B$. 
\end{fact}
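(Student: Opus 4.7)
The plan is to establish local triviality of $\pi$ over normal neighborhoods in $B$, by using horizontal lifts of geodesics from $B$ to identify all fibers over a small open set with a single reference fiber.

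The first step is to exploit the two preceding facts in tandem. By Fact~\ref{Fact: horizontal}, a geodesic in $E$ whose initial velocity is horizontal remains horizontal for all time it is defined; by Fact~\ref{Fact: geodesic}, its projection to $B$ is a geodesic, parametrized at the same (constant) speed. Conversely, given a geodesic $\gamma$ in $B$ starting at $b$ and a point $p\in\pi^{-1}(b)$, one lifts $\gamma'(0)$ horizontally to $\widetilde{\gamma'(0)}_p\in\mathcal{H}_p$ and obtains a horizontal geodesic in $E$ starting at $p$ and projecting to $\gamma$. The hypothesis that $E$ is complete is what makes this horizontal geodesic defined on all of $\R$, so horizontal lifting of geodesics from $B$ to $E$ is globally well-posed irrespective of the starting fiber.

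Next, fix $b\in B$ and set $F:=\pi^{-1}(b)$. Choose a star-shaped open neighborhood $V\subset T_bB$ of $0$ on which $\exp_b$ is a diffeomorphism onto an open set $U:=\exp_b(V)\subset B$. Define
$$\Phi:U\times F\longrightarrow \pi^{-1}(U),\qquad (x,p)\longmapsto \exp_E\!\bigl(\widetilde{w}_p\bigr),$$
where $w:=\exp_b^{-1}(x)\in V$ and $\widetilde{w}_p\in\mathcal{H}_p$ denotes the unique horizontal lift with $d_p\pi(\widetilde{w}_p)=w$. The exponential $\exp_E$ is globally defined on $TE$ since $E$ is complete, and Fact~\ref{Fact: geodesic} (or rather its consequence above) gives $\pi\circ\Phi(x,p)=\exp_b(w)=x$, so $\Phi$ is fiber-preserving.

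It remains to verify that $\Phi$ is a diffeomorphism onto $\pi^{-1}(U)$. Smoothness follows from the smooth dependence of the geodesic flow on initial data together with the smoothness of the horizontal distribution (and hence of $p\mapsto\widetilde{w}_p$). Injectivity uses uniqueness of horizontal lifts: if $\Phi(x,p_1)=\Phi(x,p_2)=q$, then reversing the horizontal geodesic from $q$ (with initial velocity the horizontal lift of $-w$ at $q$) yields a single point in $F$, forcing $p_1=p_2$. Surjectivity is obtained by the same reversal: for any $q\in\pi^{-1}(x)$, the horizontal geodesic in $E$ starting at $q$ with initial velocity the horizontal lift of $-w$ at $q$ is defined up to time $1$ by completeness, and its endpoint is a point $p\in F$ satisfying $\Phi(x,p)=q$. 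Thus $\Phi$ is a diffeomorphism, giving a local trivialization of $\pi$ over $U$; running this construction around every base point produces the fiber bundle structure, with transition maps smooth by smoothness of the horizontal lift.

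The main obstacle — and the only place where the hypothesis is used — is the global existence of $\exp_E$ on $TE$. Without completeness of $E$, the horizontal lift of a geodesic in $U$ could fail to extend up to the required parameter, depending on the chosen starting point in $F$, and the map $\Phi$ would not be defined on all of $U\times F$. Everything else (smoothness, uniqueness of lifts, and the bijectivity argument) is formal once the lift is known to exist for all time.
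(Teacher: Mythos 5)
Your argument is correct and is essentially the classical proof of Hermann's theorem that the paper merely cites (the paper gives no proof of its own, only the reference \cite{hermann1960sufficient}): local trivializations over normal neighborhoods built from horizontal lifts of radial geodesics, with completeness of $E$ being exactly what guarantees those lifts exist up to the required parameter value, and Facts \ref{Fact: horizontal} and \ref{Fact: geodesic} ensuring the lifts stay horizontal and project back onto the base geodesics. The only slip is notational: in the injectivity and surjectivity steps the vector to be lifted horizontally at $q$ is not $-w\in T_bB$ but the negative of the terminal velocity $-c'(1)\in T_xB$ of the radial geodesic $c(t)=\exp_b(tw)$; with that reading, the uniqueness-of-horizontal-lifts argument goes through exactly as you describe, and the inverse of $\Phi$ is smooth because it is given by the same horizontal geodesic flow run backwards.
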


\subsubsection{\textbf{Proof of Theorem $\ref{Theorem 1}$}}\label{subsection: Proof of Theorem 1}
In order to prove Theorem \ref{Theorem 1}, we need to show that the developing map $\Dev$ is a diffeomorphism from the universal cover of any leaf of $\mathfrak{F}$ onto the affine space $\R^{n+1}$. This will be done by constructing a particular Riemannian submersion, and applying 
Subsection \ref{subsection: Riem sub}. 

Let $(M,\mathfrak{F})$ be a $(\mathsf{L}_\u(n), \R^{n+1})$-foliated manifold. For simplicity, we will assume that the leaves are of  codimension $1$, i.e. $\dim M=n+2$ (although the proof works the same way for higher codimension). Recall that on $\R^{n+1}$ we have two $\mathsf{L}_\u(n)$-invariant objects: a vector field $V$ and a degenerate Riemannian metric $h$. Consider $V_M$ the pullback of $V$, it is a continuous non-vanishing vector field on $M$ tangent to $\mathfrak{F}$. Denote by $\mathcal{V}_M$ the foliation tangent to $V_M$. Consider also $h_M$ the degenerate metric given by the pullback of $h$ on the subbundle $T\mathfrak{F}$.  \\

Choose an arbitrary Riemannian metric $g_0$ on $M$, and decompose the tangent bundle $TM$ orthogonally with respect to $g_0$ as follows, $$TM= T\mathfrak{F}\oplus \R = \R  V_M\oplus\mathcal{S}\oplus \R Z,$$
where $\mathcal{S}$ is an $n$-dimensional distribution tangent to $\mathfrak{F}$ and transversal to $V_M$, and $Z$ is a $g_0$-unit vector field (it exists, up to taking a double cover).
\vspace{0.2cm}

\paragraph{\textbf{Constructing the Riemannian submersion.}}
We define a Riemannian metric $g$ on $M$, compatible with $h_M$ in the following sense $$g(V_M,V_M)=g(Z, Z)=1, \  g(V_M,Z )=0,$$ $$
g(V_M,X)=0,\ g(Z,X)=0, \ g(X, Y)=h_M(X,Y)$$
for all $X, Y\in \mathcal{S}$. The metric $g$ is indeed Riemannian, since the restriction of $h_M$ to the distribution $\mathcal{S}$ is non-degenerate and Riemannian (recall that $\mathcal{S}$ is transversal to $V_M$, which spans the radical of $h_M$).\medskip

\begin{fact}\label{Fact: complete riemannian leaf}
    Let $(M, \mathfrak{F}, g)$ be a compact foliated Riemannian manifold. Then any leaf $F$ with the induced Riemannian metric is geodesically complete.
\end{fact}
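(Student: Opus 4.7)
The plan is to reduce geodesic completeness of an arbitrary leaf to the completeness of a smooth vector field on a compact manifold, via the leafwise geodesic spray.

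First, I would observe that the restriction of $g$ to the subbundle $T\mathfrak{F}$ is a smooth leafwise Riemannian metric on $T\mathfrak{F}$. On each leaf, the associated Levi-Civita connection depends, in foliated charts, only on this restriction and its derivatives along the leaf, so the leafwise geodesic equation is a smooth second-order ODE on $T\mathfrak{F}$. Equivalently, there is a smooth vector field $S$ on the total space of $T\mathfrak{F}$ -- the leafwise geodesic spray -- whose integral curves project to $M$ as precisely the leafwise geodesics.

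The key compactness step is next. Because every leafwise geodesic has constant $g$-speed, the spray $S$ preserves the function $v \mapsto g(v,v)$ and is therefore tangent to the unit sphere bundle $T^1\mathfrak{F} = \{v \in T\mathfrak{F} : g(v,v) = 1\}$. Since $M$ is compact and $T^1\mathfrak{F} \to M$ is a sphere bundle with compact fibre $\mathbb{S}^{\dim \mathfrak{F} - 1}$, the total space $T^1\mathfrak{F}$ is itself compact. Every smooth vector field on a compact manifold is complete, so the flow of $S|_{T^1\mathfrak{F}}$ is defined on all of $\R$.

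To conclude: any unit-speed leafwise geodesic in any leaf $F$ arises by projecting an integral curve of $S|_{T^1\mathfrak{F}}$ to $M$, hence is defined on all of $\R$. By the standard homogeneity $t \mapsto ct$ of the geodesic equation, this upgrades immediately to geodesics of arbitrary constant $g$-speed, so $(F, g|_F)$ is geodesically complete. There is no serious obstacle here: the only point to check carefully is the smoothness of $S$ across leaves, which is immediate from the formula for the Christoffel symbols in foliated charts, since $g|_{T\mathfrak{F}}$ and its leafwise derivatives depend smoothly on the transverse parameter.
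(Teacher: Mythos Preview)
Your proof is correct. The paper states this result as a \emph{Fact} without proof, treating it as elementary and well known, so there is no argument in the paper to compare against. Your approach via the leafwise geodesic spray restricted to the compact unit sphere bundle $T^1\mathfrak{F}$ is the standard one and goes through without issue; the only point requiring care is the smoothness of the spray transversally, which you correctly justify by noting that the leafwise Christoffel symbols (built from the inverse of the $T\mathfrak{F}$-block of $g$ and its tangential derivatives) depend smoothly on the transverse coordinates.
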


\begin{corollary}\label{cor: geodesically-complete}
 Let $F$ be a leaf of $\mathfrak{F}$. Then $(F, g_{|_{F}})$ is geodesically complete.    
\end{corollary}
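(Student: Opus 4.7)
The statement is essentially an immediate application of Fact \ref{Fact: complete riemannian leaf} to the Riemannian manifold $(M,g)$ with its foliation $\mathfrak{F}$. My plan is therefore to simply verify that the hypotheses of that fact are met and invoke it.

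First I would observe that $M$ is compact by assumption, and that the metric $g$ constructed just above is a genuine \emph{Riemannian} metric on the whole of $M$: indeed, the two components $\R V_M$ and $\R Z$ are $g$-orthonormal by construction, they are $g$-orthogonal to $\mathcal{S}$, and the restriction of $g$ to $\mathcal{S}$ coincides with the restriction of the degenerate metric $h_M$, which is positive definite on $\mathcal{S}$ since $\mathcal{S}$ is transversal to the radical $\R V_M$ of $h_M$. Hence the triple $(M,\mathfrak{F},g)$ satisfies the hypotheses of Fact \ref{Fact: complete riemannian leaf}.

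Applying that fact, every leaf $F$ of $\mathfrak{F}$, equipped with the induced Riemannian metric $g_{|F}$, is geodesically complete. This gives the corollary. The only subtlety worth noting is that the leaves of $\mathfrak{F}$ need not be closed submanifolds of $M$, so one cannot argue from compactness of $F$; however, this is precisely the content of Fact \ref{Fact: complete riemannian leaf}, which takes care of completeness of non-closed leaves via the uniform control on the injectivity radius coming from compactness of the ambient manifold. There is no genuine obstacle here — this corollary is recorded mainly to fix the Riemannian metric on each leaf that will be used in the subsequent construction of the Riemannian submersion.
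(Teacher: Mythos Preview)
Your proposal is correct and follows exactly the paper's approach: the paper's proof consists of the single sentence ``This follows from Fact \ref{Fact: complete riemannian leaf}.'' Your additional verification that $g$ is Riemannian and your remark on non-closed leaves are reasonable elaborations, but the core argument is identical.
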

\begin{proof}
This follows from Fact \ref{Fact: complete riemannian leaf}. 
\end{proof}

Let $(\widetilde{F}, \widetilde{g_{|_{{F}}}})$ be the universal cover of $F$, endowed with the lifted Riemannian metric. The Riemannian manifold $(\widetilde{F}, \widetilde{g_{|_{{F}}}})$ is geodesically complete. Let  $\Dev: \widetilde{F} \to \R^{n+1}$ be a developing map for the leaf $F$. Let  $\mathfrak{p}: \R^{n+1}\to \R^{n+1}/\mathcal{V}\cong \R^{n}$ be the projection onto the space of $\mathcal{V}$-leaves. The latter is identified to $\R^{n}$, equipped with the flat Riemannian metric given by the projection of $h$ by $\mathfrak{p}$. We denote the projected metric by $\Bar{h}$.  Define $\pi:= \mathfrak{p} \circ \Dev$, this is a smooth submersion from $\widetilde{F}$ to $B:=\pi(\widetilde{F}) \subset \R^n$, an open subset of $\R^n$.
\begin{lemma}
    The map $\pi: (\widetilde{F}, \widetilde{g_{|_{{F}}}}))\to (B, \Bar{h}_{|_{B}}) $ is a Riemannian submersion.
\end{lemma}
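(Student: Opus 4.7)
The plan is to verify directly the two defining conditions for a Riemannian submersion: identify the vertical distribution $\ker d\pi$, then check that $d\pi$ is an isometry when restricted to its $\widetilde{g_{|_F}}$-orthogonal complement.

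First, I would unpack the kernel of $d\pi = d\mathfrak{p} \circ d\Dev$. Since $\Dev$ is a local diffeomorphism, $\ker d\pi$ equals $(d\Dev)^{-1}(\ker d\mathfrak{p})$. The kernel of $d\mathfrak{p}$ is exactly the line field $\R V$ generated by the $\mathsf{L}_\u(n)$-invariant vector field $V = \partial_{x_1}$ on $\R^{n+1}$. By definition of the pullback, the preimage under $d\Dev$ of $\R V$ is the line field generated by the lift $\widetilde{V_M}$ of $V_M$ to $\widetilde{F}$. Hence the vertical distribution coincides with the tangent line to the lifted foliation $\widetilde{\mathcal{V}_M}$.

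Next, I would identify the horizontal distribution. By the construction of $g$, one has $g(V_M, X) = 0$ for every $X \in \mathcal{S}$ and $g(V_M, V_M) = 1$; consequently, inside the tangent bundle of any leaf $F$, the $g$-orthogonal complement of $\R V_M$ is precisely $\mathcal{S}|_F$. Lifting to $\widetilde{F}$ equipped with $\widetilde{g_{|_F}}$, the horizontal distribution $\mathcal{H}$ is the lift of $\mathcal{S}|_F$.

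Finally, I would verify the isometry condition. For a horizontal vector $X \in \mathcal{H}$ at some point, the definition of $g$ gives $\widetilde{g_{|_F}}(X,X) = h_M(X,X)$, and since $h_M$ is the pullback of $h$ via $\Dev$ on each chart, this equals $h(d\Dev(X), d\Dev(X))$. On the quotient side, the projected metric $\bar{h}$ satisfies $\bar{h}(d\mathfrak{p}(Y), d\mathfrak{p}(Y)) = h(Y,Y)$ for every $Y \in T\R^{n+1}$, precisely because the radical of $h$ is $\R V = \ker d\mathfrak{p}$. Applying this with $Y = d\Dev(X)$ yields $\bar{h}(d\pi(X), d\pi(X)) = h(d\Dev(X), d\Dev(X)) = \widetilde{g_{|_F}}(X,X)$, so $d\pi$ restricts to a linear isometry from $\mathcal{H}_p$ onto $T_{\pi(p)}B$. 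This is the whole content of the statement; there is no real obstacle, only careful bookkeeping between the developing map, the radical of $h$, and the tailored choice of the metric $g$ in the preceding construction.
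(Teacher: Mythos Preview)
Your proof is correct and follows essentially the same approach as the paper: both identify the horizontal distribution as the lift $\widetilde{\mathcal{S}}$ of $\mathcal{S}|_F$, and then observe that the metric $g$ restricted to $\mathcal{S}$ agrees with $h_M$, which in turn matches $h$ under $\Dev$ and hence $\bar{h}$ under the quotient projection $\mathfrak{p}$. The paper phrases the last step as ``the metric induced by $h$ on any subspace transversal to $V$ is isometric to the quotient metric,'' which is exactly your observation that $\bar{h}(d\mathfrak{p}(Y), d\mathfrak{p}(Y)) = h(Y,Y)$.
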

\begin{proof}
  The horizontal distribution on $\widetilde{F}$ is given by $\widetilde{\mathcal{S}}$, the lift of $\mathcal{S}_{\vert F}$ to $\widetilde{F}$. Let $x\in \widetilde{F}$ and let $y:=\Dev(x)$. Then, $E_{y}:=d_x\Dev(\widetilde{\mathcal{S}}_x)$ is a vector subspace of $T_y \R^{n+1}$ transversal to the radical of $h_{y}$ (i.e. to $\R V$). By construction, the  metric induced on $\widetilde{\mathcal{S}}_x$ by $g$ is isometric to the metric induced on $E_{y}$ by $h$. The lemma follows then from the fact that the metric induced by $h$ on any vector subspace of $T_{y}\R^{n+1}$ transversal to $V$ is isometric to $(T_{y}\R^{n+1}/\R V, \Bar{h}_{\mathfrak{p}(y)})$.
\end{proof}

\begin{corollary}
$(B, \Bar{h}_{|_{B}})$ is complete, in particular, we have $B=\R^{n}$.   
\end{corollary}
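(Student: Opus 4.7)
The plan is to use the completeness of the total space $(\widetilde{F}, \widetilde{g_{|_F}})$ together with the two properties of Riemannian submersions recalled in Subsection \ref{subsection: Riem sub} to transfer completeness to the base $B$. Once $(B,\bar{h}_{|_B})$ is shown to be complete, the fact that $B$ is a connected open subset of the flat Euclidean $(\R^n,\bar{h})$ will force $B=\R^n$, since a connected open flat Euclidean submanifold which is geodesically complete must be the whole of $\R^n$.

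First, I would pick an arbitrary geodesic $\bar{\gamma}: I \to B$ of the base, and show it extends to a geodesic defined on all of $\R$. Fix $t_0 \in I$, choose $x_0 \in \widetilde{F}$ with $\pi(x_0)=\bar{\gamma}(t_0)$, and lift the initial tangent vector $\bar{\gamma}'(t_0) \in T_{\bar{\gamma}(t_0)}B$ horizontally to the unique vector $v \in \mathcal{H}_{x_0}$ such that $d_{x_0}\pi(v)=\bar{\gamma}'(t_0)$ (this lift exists and is an isometry since $\pi$ is a Riemannian submersion). Let $\gamma: J \to \widetilde{F}$ be the geodesic of $(\widetilde{F}, \widetilde{g_{|_F}})$ with $\gamma(t_0)=x_0$ and $\gamma'(t_0)=v$. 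By Fact~\ref{Fact: horizontal} (O'Neil), $\gamma$ remains horizontal on its entire domain. By Corollary~\ref{cor: geodesically-complete} combined with the fact that the universal cover of a complete Riemannian manifold is complete, $\gamma$ is defined on all of $\R$.

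Next, I would apply Fact~\ref{Fact: geodesic}: since $\gamma$ is a horizontal geodesic of constant speed in $\widetilde{F}$, the projected curve $\pi \circ \gamma$ is a geodesic of $(B, \bar{h}_{|_B})$ defined on $\R$. By uniqueness of geodesics with prescribed initial condition, $\pi \circ \gamma$ agrees with $\bar{\gamma}$ on $I$, hence provides the required extension of $\bar{\gamma}$ to $\R$. Thus every geodesic of $B$ is defined on $\R$, i.e. $(B, \bar{h}_{|_B})$ is geodesically complete.

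Finally, I would conclude: $B = \pi(\widetilde{F})$ is the continuous image of a connected space, hence connected, and by construction it is an open subset of $\R^n$ equipped with the restriction of the flat metric $\bar{h}$. A connected open subset of Euclidean space which is geodesically complete for the induced flat metric must coincide with $\R^n$ (any point on the boundary would be reached at finite parameter along a straight line from an interior point). Therefore $B=\R^n$. No step is a serious obstacle here; the argument is a direct synthesis of the three facts listed in Subsection \ref{subsection: Riem sub} with the leafwise Riemannian completeness already established in Corollary~\ref{cor: geodesically-complete}, and the mild topological observation at the end.
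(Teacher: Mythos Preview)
Your proof is correct and follows essentially the same approach as the paper: both use that horizontal geodesics in $\widetilde{F}$ are complete (Corollary~\ref{cor: geodesically-complete} plus Fact~\ref{Fact: horizontal}) and project to geodesics in $B$ (Fact~\ref{Fact: geodesic}), concluding that the open subset $B\subset\R^n$ contains every affine line through each of its points, hence $B=\R^n$. The only cosmetic difference is that the paper starts from a horizontal geodesic upstairs and projects, whereas you start from a geodesic downstairs and lift its initial vector; the content is identical.
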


\begin{proof}
Let $\gamma : I \to \widetilde{F}$ be a horizontal geodesic (recall that the horizontal distribution is totally geodesic by Fact \ref{Fact: horizontal}). Then $\gamma$ is complete by Corollary \ref{cor: geodesically-complete}. Moreover, by Fact \ref{Fact: geodesic}, the projection of $\gamma$ by $\pi$ is a geodesic in the base space, which is an open subset of the flat euclidean space, hence $\pi\circ\gamma$ is an affine segment. Since $\gamma$ is complete, this affine segment is a complete affine line. This is true for any horizontal geodesic in all possible directions, hence $B=\R^n$. 
\end{proof}

\begin{corollary}\label{Cor:fibre bundle}
    The map $\pi: (\widetilde{F}, \widetilde{g_{|_{{F}}}}))\to (\R^{n}, \Bar{h})$ is a trivial topological fiber bundle, i.e. we have $\widetilde{F} \cong \R^{n}\times l $, with $l=\pi^{-1}(0)$.  
\end{corollary}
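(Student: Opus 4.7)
The plan is to apply Hermann's fibration theorem (Fact \ref{Cor: Hermann Fibration }) directly, and then use contractibility of the base to upgrade the resulting fiber bundle to a trivial one.

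First, I would verify that the hypotheses of Fact \ref{Cor: Hermann Fibration } are all in place. The previous lemma shows that $\pi: (\widetilde{F}, \widetilde{g_{|_{{F}}}}) \to (B, \bar{h}_{|_B})$ is a Riemannian submersion, and the corollary just above shows $B = \R^n$. The total space $(\widetilde{F}, \widetilde{g_{|_{{F}}}})$ is geodesically complete: indeed, Fact \ref{Fact: complete riemannian leaf} gives completeness of the Riemannian metric $g_{|_F}$ on each leaf $F$ of $\mathfrak{F}$ (since $(M,g)$ is compact Riemannian and $\mathfrak{F}$ is a foliation by closed leaves of a compact manifold is not needed — the leaf $F$ is geodesically complete in the induced metric by Fact \ref{Fact: complete riemannian leaf}), and completeness lifts to the universal cover.

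Next, applying Fact \ref{Cor: Hermann Fibration } to $\pi$, I conclude that $\pi: \widetilde{F} \to \R^n$ is a (locally trivial) fiber bundle. Since the base $\R^n$ is contractible and paracompact, any fiber bundle over $\R^n$ with structure group the homeomorphisms of the fiber is trivial. Hence there is a homeomorphism $\widetilde{F} \cong \R^n \times l$, where $l := \pi^{-1}(0)$ is the fiber over the origin, and under this identification $\pi$ becomes the first projection.

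This is essentially a one-line corollary, so there is no real obstacle: the work has already been done in verifying that $\pi$ is a Riemannian submersion with complete total space and in identifying the base as $\R^n$. The only mild subtlety to mention is that the fiber $l$ is a $1$-dimensional submanifold of $\widetilde{F}$ (since $\dim \widetilde{F} = n+1$), which will matter in the subsequent analysis of the developing map restricted to $l$.
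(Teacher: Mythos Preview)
Your proposal is correct and follows exactly the paper's own argument: apply Hermann's fibration theorem (Fact~\ref{Cor: Hermann Fibration }) to the complete Riemannian submersion $\pi$, then use contractibility of the base $\R^n$ to conclude triviality. The only difference is that you spell out the verification of the hypotheses in more detail than the paper does.
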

\begin{proof}
    By  Fact \ref{Cor: Hermann Fibration }, we have that $\pi: (\widetilde{F}, \widetilde{g_{|_{{F}}}}))\to (\R^{n}, \Bar{h})$ is a topological fiber bundle. Since the base space is contractible, the bundle is trivial (a product bundle).
\end{proof}
We are now able to prove the main theorem of this section, Theorem \ref{Theorem 1}. 

\begin{proof}[Proof of Theorem \ref{Theorem 1}]
Let $F$ be a leaf of $\mathfrak{F}$, with $\Dev: \widetilde{F} \to \R^{n+1}$ a developing map. We will show that $\Dev$ is a diffeomorphism onto $\R^{n+1}$. We start by the surjectivity.  Recall that the  Riemannian submersion $\pi=\mathfrak{p} \circ \Dev: \widetilde{F} \to \R^n $, where $\R^n$ is the Euclidean space, is surjective. Let $z\in\R^{n+1}$ be any point,  and let $x_o \in\widetilde{F} \smallsetminus \pi^{-1}(\{\mathfrak{p}(z)\})$. Set $x:=\pi(x_o)$. Consider the line segment $\gamma :=[x,\mathfrak{p}(z)]$ in $\R^n$, and let $\Tilde{\gamma}$ be the horizontal lift  of $\gamma$ by $\pi$, starting at $x_o$. Denote by $z_o$ the other endpoint of $\Tilde{\gamma}$ in $\widetilde{F}$. 
Since $\pi(z_o)=\mathfrak{p}(z)$, we see that $\Dev(z_o)\in \mathcal{V}(z)$, where $\mathcal{V}(z)$ denotes the $\mathcal{V}$-leaf containing $z$. Moreover, the developing map sends the $\widetilde{\mathcal{V}}_M$-leaf containing $z_o$ to the $\mathcal{V}$-leaf containing $z$, i.e. $\Dev(\widetilde{\mathcal{V}}_{M}(z_o))\subset \mathcal{V}(z)$. We claim that the restriction of $\Dev$ to any $\widetilde{\mathcal{V}}_M$-leaf is injective. Indeed, since the vector field $\widetilde{V}_M$ is  the pullback of $V$ via $\Dev$, it follows that $\Dev$ is equivariant with respect to their respective flows, i.e. for any $\Tilde{p} \in \widetilde{F}$ we have $$\Dev(\phi_{\widetilde{V}_{M}}^{t}(\Tilde{p})) = \phi_{V}^{t}(\Dev(\Tilde{p})).$$
And now, since the flow of $V$ acts freely (by translations) on the affine space $\R^{n+1}$, and in particular, on its orbits (affine lines), the claim follows. On the other hand, since $M$ is compact, the flow of $\widetilde{V}_M$ is complete. Therefore, by the equivariance formula, the restriction of the developing map to any $\widetilde{\mathcal{V}}_M$-leaf is also surjective. The surjectivity of $\Dev$ follows.\\ 
For the injectivity, assume that there are two distinct points $x_o, y_o\in \widetilde{F}$ (necessarily in different $\widetilde{\mathcal{V}}_M$-leaves) such that $\Dev(x_o)=\Dev(y_o)$. Hence,  $\pi(x_o)=\pi(y_o)=x$, which means that the fiber of $x$ contains at least two $\widetilde{\mathcal{V}}_M$-leaves. However, Corollary \ref{Cor:fibre bundle} together with the fact that $\widetilde{F}$ is connected imply that the fiber is necessarily connected, leading to a contradiction. 
\end{proof}
Given a  $(\mathsf{L}_\u(n), \R^{n+1})$-foliated manifold $(M, \mathfrak{F})$, the pullback to $T \mathfrak{F}$ of the flat affine connection of $\R^{n+1}$ defines a flat affine connection on the leaves of $\mathfrak{F}$. And we have
\begin{corollary}\label{Cor: L_u complete implies geod complete}
   Let $(M,\mathfrak{F})$ be a compact $(\mathsf{L}_\u(n), \R^{n+1})$-foliated manifold. Then the leaves of $\mathfrak{F}$ are geodesically complete with respect to the induced flat affine connection.
\end{corollary}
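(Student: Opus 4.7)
The plan is to reduce this corollary to Theorem \ref{Theorem 1} via Fact \ref{Fact: complete=complete} applied leafwise. The essential observation is that the model space $\R^{n+1}$, equipped with its standard flat affine connection, is geodesically complete (the geodesics are the full affine lines), and this connection is preserved by all of $\Aff(\R^{n+1})$, hence in particular by the subgroup $\L_\u(n)$. Therefore the flat affine connection descends to a well-defined flat affine connection $\nabla^F$ on each leaf $F$ of $\mathfrak{F}$: transition maps between tangential charts are restrictions of elements of $\L_\u(n)$, so they preserve the pulled-back connection, and this is precisely the ``induced flat affine connection'' referred to in the statement.

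With this set up, I would fix a leaf $F$ and its universal cover $\widetilde{F}$, together with the developing map $\Dev \colon \widetilde{F} \to \R^{n+1}$. By Theorem \ref{Theorem 1}, $F$ is $(\L_\u(n), \R^{n+1})$-complete; in fact the proof shows that $\Dev$ is a diffeomorphism. Since $\Dev$ is a local affine diffeomorphism (it pulls back the flat connection of $\R^{n+1}$ to $\widetilde{\nabla^F}$, the lift of $\nabla^F$), any geodesic $\gamma \colon I \to \widetilde{F}$ is sent to an affine segment $\Dev \circ \gamma$ in $\R^{n+1}$. This segment extends to a full affine line $\R \to \R^{n+1}$, which can be lifted uniquely through the covering map $\Dev$ to an extension of $\gamma$ to all of $\R$. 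Hence $(\widetilde{F}, \widetilde{\nabla^F})$ is geodesically complete, and therefore so is $(F, \nabla^F)$.

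There is really no substantive obstacle here beyond bookkeeping; the content was already carried by Theorem \ref{Theorem 1}. The only point that deserves a brief justification is the invocation of Fact \ref{Fact: complete=complete} in the foliated setting, which amounts to noting that geodesic completeness of a $(G,X)$-structure is a purely leaf-intrinsic notion once the connection on each leaf has been identified with the pullback of the $G$-invariant connection on $X$ via the developing map. Accordingly, the proof can be written in essentially one sentence: combine Theorem \ref{Theorem 1} with Fact \ref{Fact: complete=complete} applied to each leaf, using that the flat affine connection on $\R^{n+1}$ is $\L_\u(n)$-invariant and complete.
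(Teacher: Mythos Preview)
Your proposal is correct and follows exactly the same approach as the paper: the paper's proof is the single sentence ``This is a consequence of Theorem \ref{Theorem 1} and Fact \ref{Fact: complete=complete},'' which is precisely what you spell out in more detail.
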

\begin{proof}
    
    This is a consequence of Theorem \ref{Theorem 1} and Fact \ref{Fact: complete=complete}.
\end{proof}

For the dual geometry, we need the following fact:
\begin{fact}[Proposition $8$ \cite{leistner2016completeness}]\label{Fact: structure of the universal cover}
Let $M$ be a compact manifold endowed with a  nowhere vanishing closed $1$-form $\omega$. Let $\widetilde{\omega}$ be the pullback of $\omega$ to the universal cover $\widetilde{M}$, and $Z$ a vector field on $\widetilde{M}$ such that $\widetilde{\omega}(Z)=1$. Then $\widetilde{M}$ is diffeomorphic to $\R \times \widetilde{F}$ where $\widetilde{F}$ is a leaf of the foliation $\mathfrak{F}_{\widetilde{M}}$ tangent to $\ker(\widetilde{\omega})$. And the flow of $Z$ acts simply transitively on the space of leaves, identified to $\R$, and maps a leaf to another diffeomorphically. 
\end{fact}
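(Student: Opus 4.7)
The plan is to integrate $\widetilde{\omega}$ to a global submersion onto $\R$ and then use the flow of $Z$ as a trivialization. First, since $\widetilde{M}$ is simply connected and $\widetilde{\omega}$ is closed, Poincaré's lemma produces a smooth function $f\colon\widetilde{M}\to\R$ with $df=\widetilde{\omega}$; because $\widetilde{\omega}$ is nowhere vanishing, $f$ is a submersion and its fibers are unions of leaves of $\mathfrak{F}_{\widetilde{M}}$.

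Next, I would address the existence and completeness of $Z$. Using a partition of unity on $M$, pick a smooth vector field $Z_0$ on $M$ with $\omega(Z_0)=1$; since $M$ is compact, $Z_0$ is complete. Take $Z$ to be the $\pi_1(M)$-invariant lift of $Z_0$ to $\widetilde{M}$: it satisfies $\widetilde{\omega}(Z)=1$ and, since the covering $\widetilde{M}\to M$ lifts flows, $Z$ is also complete. The key computation is then
\begin{equation*}
\frac{d}{dt}\,f(\phi_Z^t(x)) \;=\; df_{\phi_Z^t(x)}(Z) \;=\; \widetilde{\omega}(Z) \;=\; 1,
\end{equation*}
so $f\circ\phi_Z^t=f+t$. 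In particular, $\phi_Z^t$ permutes the fibers of $f$ by translation: it maps $f^{-1}(s)$ diffeomorphically onto $f^{-1}(s+t)$, and the induced $\R$-action on the set of fibers is simply transitive.

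With these ingredients in hand, I would introduce $\Phi\colon \R\times f^{-1}(0)\to \widetilde{M}$, $(t,x)\mapsto \phi_Z^t(x)$. Injectivity follows by applying $f$ and $\phi_Z^{-t}$ in turn; surjectivity follows because any $y\in\widetilde{M}$ can be slid back to $f^{-1}(0)$ via $\phi_Z^{-f(y)}$, using completeness of $Z$. Smoothness and smoothness of the inverse come from smooth dependence of flows on initial conditions, so $\Phi$ is a diffeomorphism and $f\colon\widetilde{M}\to\R$ is a (trivial) fiber bundle with fiber $f^{-1}(0)$.

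The last point to clear up is that $f^{-1}(0)$ is connected, so that it is a single leaf $\widetilde{F}$ and not merely a disjoint union of leaves. I expect this to be the main obstacle (the simply connected hypothesis is essential here). The cleanest route is the homotopy long exact sequence of the fiber bundle $f\colon\widetilde{M}\to\R$:
\begin{equation*}
\pi_1(\widetilde{M})\longrightarrow \pi_1(\R)\longrightarrow \pi_0(f^{-1}(0))\longrightarrow \pi_0(\widetilde{M})\longrightarrow \pi_0(\R).
\end{equation*}
Since $\widetilde{M}$ is simply connected (hence in particular connected) and $\R$ is contractible, both ends vanish, forcing $\pi_0(f^{-1}(0))=0$. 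Thus $f^{-1}(0)$ is a connected leaf $\widetilde{F}$, $\Phi$ realizes the diffeomorphism $\widetilde{M}\cong\R\times\widetilde{F}$, and the simple transitivity of the $Z$-flow on the space of leaves is exactly the relation $f\circ\phi_Z^t=f+t$ obtained above.
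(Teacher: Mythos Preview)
Your proof is correct and follows the standard argument for this classical fact. Note, however, that the paper does not supply its own proof of this statement: it is recorded as a Fact with an explicit citation to Proposition~8 of \cite{leistner2016completeness}, and is then used as a black box in the proof of Corollary~\ref{Cor: Lu*}. So there is no in-paper argument to compare against; your write-up simply fills in what the paper outsources.

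One small remark worth making explicit: the statement, as phrased, allows $Z$ to be \emph{any} vector field on $\widetilde{M}$ with $\widetilde{\omega}(Z)=1$, but such a $Z$ need not be complete, and without completeness the trivialization $\Phi$ may fail to be surjective. You implicitly repaired this by taking $Z$ to be the lift of a vector field $Z_0$ on the compact base $M$, which is exactly how the paper uses the fact in Corollary~\ref{Cor: Lu*} (there the vector field is called $W$, chosen on $M$ with $\omega_M(W)=1$, and then lifted). It would sharpen your exposition to flag this point: the conclusion genuinely requires $Z$ to be the lift of a vector field on $M$ (or otherwise complete), not an arbitrary section of the affine subbundle $\{\widetilde{\omega}=1\}$ on $\widetilde{M}$.
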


\begin{corollary}[The dual geometry]\label{Cor: Lu*}
    Let $(M,\mathfrak{F})$ be a compact $(\mathsf{L}_\u^*(n), \R^{n+1})$-foliated manifold. Then the leaves of $\mathfrak{F}$ are $(\mathsf{L}_\u^*(n), \R^{n+1})$-complete.
\end{corollary}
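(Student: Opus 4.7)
The plan is to dualize the proof of Theorem \ref{Theorem 1} by exchanging the roles of the invariant vector field and the invariant $1$-form. The $\L_\u^*(n)$-invariant data on $\R^{n+1}$ consist of the parallel closed $1$-form $\omega = dx_{n+1}$, the invariant codimension-one foliation $\{x_{n+1}=\text{const}\}$, and the Euclidean metric $h = dx_1^2+\cdots+dx_n^2$; note that $h$ is $\L_\u^*(n)$-invariant only after restriction to the subbundle $\ker(dx_{n+1})\subset T\R^{n+1}$, since $\L_\u^*(n)$ acts on each hyperplane $\{x_{n+1}=c\}$ as the Euclidean group $\Euc(n)$. Pulling these objects back via the tangential $\L_\u^*(n)$-structure on $(M,\mathfrak{F})$ yields a smooth closed nowhere-vanishing $1$-form $\omega_M$ on $T\mathfrak{F}$, a Euclidean metric $h_M$ on $\ker(\omega_M)\subset T\mathfrak{F}$, and a codimension-one subfoliation $\mathcal{K}_M$ of $\mathfrak{F}$ tangent to $\ker(\omega_M)$.

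The key structural observation is that $(M,\mathcal{K}_M)$ is itself naturally a compact $(\Euc(n),\R^n)$-foliated manifold: refining each foliation chart $\varphi:U\to\R^k\times\R^{n+1}$ by splitting off the $x_{n+1}$-coordinate into $\R^k\times\R\times\R^n$, the transition maps take the form $(\psi(t),\eta(t,s),A_{t,s}\,y+\beta_{t,s})$ with $A_{t,s}\in\O(n)$ and $\beta_{t,s}\in\R^n$. Because the point stabilizer $\O(n)$ is compact, the first tautological case of Remark \ref{Remark: leafwise completeness trivial cases} (equivalently, Fact \ref{Fact: complete riemannian leaf} applied to the Riemannian metric $h_M$ on $T\mathcal{K}_M$) guarantees that every leaf of $\mathcal{K}_M$ is $(\Euc(n),\R^n)$-complete.

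Now fix a leaf $F$ of $\mathfrak{F}$, set $\omega_F:=\omega_M|_F$, and let $\Dev:\widetilde{F}\to\R^{n+1}$ be a developing map. By a partition-of-unity argument on compact $M$, we choose a vector field $Z$ on $M$ tangent to $\mathfrak{F}$ with $\omega_M(Z)=1$; its flow is complete on $M$ and hence on $\widetilde{F}$. Applying Fact \ref{Fact: structure of the universal cover} to $\omega_F$ produces a diffeomorphism $\widetilde{F}\cong\R\times\widetilde{K}_0$, where the $\R$-factor is parameterized by the lift of the flow of $Z$ and each slice $\{s\}\times\widetilde{K}_0$ is a simply connected leaf of $\ker(\widetilde{\omega_F})$. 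Via $\Dev$ each such slice inherits the tangential $(\Euc(n),\R^n)$-structure of the previous paragraph; being simply connected and $(\Euc(n),\R^n)$-complete, it is sent diffeomorphically by $\Dev$ onto a hyperplane $\{x_{n+1}=s+c_0\}\cong\R^n$.

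To finish, the equivariance $\Dev^*(dx_{n+1})=\widetilde{\omega_F}$ combined with $\widetilde{\omega_F}(\widetilde{Z})=1$ forces $\Dev$ to send $\{s\}\times\widetilde{K}_0$ into $\{x_{n+1}=s+c_0\}$. Surjectivity and injectivity of $\Dev$ then follow by the same two-step argument as in the proof of Theorem \ref{Theorem 1}, dualized: given $z\in\R^{n+1}$, first flow along $\widetilde{Z}$ to reach the slice whose image is the hyperplane containing $z$, then invoke the slice-wise diffeomorphism within that hyperplane; and two points in $\widetilde{F}$ with the same image necessarily lie in a common slice, where $\Dev$ is already known to be injective. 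I expect the main subtlety relative to Theorem \ref{Theorem 1} to be the verification that $(M,\mathcal{K}_M)$ carries a genuine $(\Euc(n),\R^n)$-foliated structure, since $h$ is not $\L_\u^*(n)$-invariant on all of $T\R^{n+1}$ but only on the invariant hyperplanes $\{x_{n+1}=c\}$; once this reduction is in place, the rest of the argument is a direct dualization of the proof of Theorem \ref{Theorem 1}.
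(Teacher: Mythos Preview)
Your proposal is correct and follows essentially the same route as the paper: pull back the invariant $1$-form and the hyperplane foliation, choose a vector field $Z$ with $\omega_M(Z)=1$, apply Fact~\ref{Fact: structure of the universal cover} to split $\widetilde{F}\cong\R\times\widetilde{K}_0$, show that $\Dev$ is bijective on each $\widetilde{\mathcal{K}_M}$-slice, and then use the transverse flow of $\widetilde{Z}$ (via the equivariance $\Dev^*(dx_{n+1})=\widetilde{\omega_F}$) to promote this to bijectivity of $\Dev$ on all of $\widetilde{F}$.

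The one place where you differ slightly from the paper is in the justification of slice-wise bijectivity. The paper simply says this step is done ``similarly to the proof of Theorem~\ref{Theorem 1}'', i.e.\ by building a Riemannian metric on $M$ restricting to $h_M$ along the $\mathcal{K}_M$-leaves and invoking Fact~\ref{Fact: complete riemannian leaf}. You instead package this more conceptually by observing that $(M,\mathcal{K}_M)$ is a compact $(\Euc(n),\R^n)$-foliated manifold in its own right and then appealing to the compact-stabilizer tautology of Remark~\ref{Remark: leafwise completeness trivial cases}. These are the same argument in different clothing---your version is arguably cleaner since the metric on the hyperplanes is already non-degenerate, so the Riemannian-submersion machinery of Theorem~\ref{Theorem 1} is unnecessary here. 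Your check that refining the $\mathfrak{F}$-charts gives genuine $(\Euc(n),\R^n)$-foliation charts for $\mathcal{K}_M$ is correct: the transversal becomes $\R^k\times\R$ and the leafwise transition $y\mapsto A_t y+s\beta_t+b_{1,t}$ is Euclidean for each fixed $(t,s)$.
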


\begin{proof}
In the basis $(e_1,\ldots,e_{n+1})$ of $\R^{n+1}$ where the matrix group $\mathsf{L}_\u^*(n)$ is represented, the $1$-form $\omega:=e_{n+1}^*$ as well as the codimension $1$ foliation $\mathfrak{G}$ tangent to $\ker \omega$ are $\mathsf{L}_\u^*(n)$-invariant. 
Denote by $\mathfrak{G}_M$ the foliation on $M$, tangent to $\mathfrak{F}$, given by the pullback of  $\mathfrak{G}$.  And denote by $\omega_M$ the $1$-form on $M$ given by the pullback of $\omega$. 
There exists a vector field $W$ on $M$ such that $\omega_M(W)=1$. In fact, $W$ can be chosen as a global section of a tangent affine subbundle with contractible fibers, ensuring the existence of the section.
We denote by $\widetilde{\omega}_M$, $\widetilde{W}$, $\widetilde{\mathfrak{G}_M}$ the lifts of the corresponding objects to the universal cover $\widetilde{M}$. 
We denote the flow of a vector field $U$ by $\phi_U^t$. Since $M$ is compact, $W$ is complete. Fix a leaf $F$ of $\mathfrak{F}$, and consider a connected component $\widetilde{F}$ of its lift to $\widetilde{M}$. We have a developing map $\Dev: \widetilde{F} \to \R^{n+1}$. 
By Fact \ref{Fact: structure of the universal cover}, the flow of $\widetilde{W}$, restricted to $\widetilde{F}$, acts simply transitively on the space of $\widetilde{\mathfrak{G}_M}_{\vert \widetilde{F}}$-leaves, and maps one leaf diffeomorphically to another.
And since $\omega$ is $\mathsf{L}_\u^*(n)$-invariant, we have the following equivariance formula $$\Dev \circ \phi_{\widetilde{W}}^t = \phi_{e_{n+1}}^t \circ \Dev.$$ 
On the other hand, we can show similarly to the proof of Theorem \ref{Theorem 1} that $\Dev$, restricted to a leaf of $\widetilde{\mathfrak{G}_M}$ through $p \in \widetilde{F}$, is bijective onto the leaf of $\mathfrak{G}$ through $\Dev(p) \in \R^{n+1}$. Fix  a leaf of $\widetilde{\mathfrak{G}_M}$ in $\widetilde{F}$. The equivariance formula applied to this leaf shows that the developing map is surjective, thus, bijective on all $\widetilde{F}$, which ends the proof. 
\end{proof}

\subsection{Applications} In what follows, we discuss applications of Theorem \ref{Theorem 1}.

\subsubsection{Compact pp-waves.} 
The special class of Lorentzian manifolds, known as pp-waves, play a significant role in General Relativity.
It was proved in \cite{leistner2016completeness} that a compact pp-wave is complete. A key step in the proof was establishing the completeness of the leaves of the codimension $1$ totally geodesic foliation $\mathfrak{F}$ tangent to $V^\perp$. As observed in Example \ref{Intro-Example: pp-waves}, the leaves of $\mathfrak{F}$ inherit a natural $(\mathsf{L}_\u(n), \R^{n+1})$-structure, compatible with the connection induced from the Lorentzian metric.  We then recover, applying Corollary  \ref{Cor: L_u complete implies geod complete}, the completeness result along the leaves of $\mathfrak{F}$. We see that the completeness of the $\mathfrak{F}$-leaves of a pp-wave is independent of the presence of a Lorentzian ambient metric, and  is, in fact, a purely leafwise completeness phenomenon! 

\subsubsection{Compact manifolds modeled on (non-flat) homogeneous plane waves} Within the family of pp-waves, there exists a distinguished subclass of metrics known as plane waves, defined by specific conditions on the Riemann curvature  tensor. These metrics are widely studied due to their very interesting mathematical properties. 

By a compact model of a homogeneous plane wave, we mean a compact $(G_{\rho}, X_{\rho})$-manifold, where $X_{\rho}$ is a non-flat simply connected homogeneous plane wave, and $G_{\rho}$ is the identity component of its isometry group. These groups are described in \cite{article1}.   For $\dim X_\rho = n+2$, we have
$$G_\rho = (\R \times K) \ltimes \Heis, \;\; X_\rho=G_\rho / I, \, I \subset K \ltimes \Heis,$$
where $K \ltimes \Heis \subset \mathsf{L}_\u(n)$. We denote by $V$ the parallel null vector field of $X_\rho$, and by $\mathfrak{F}$ the foliation tangent to $V^\perp$. 
The subgroup of $G_\rho$ fixing a leaf of $\mathfrak{F}$ is exactly $K \ltimes \Heis$.

Non-flat homogeneous plane waves are divided into two families: one where the model space $X_{\rho}$ is geodesically complete and the parallel null vector field $V$ is preserved by $G_\rho$, and another where $X_{\rho}$ is incomplete and only the line field $\R V$ is preserved by $G_\rho$. 
\begin{theorem}
A compact manifold $M$ modeled on a non-flat simply connected homogeneous plane wave $(G_\rho, X_\rho)$ is $(G_\rho, X_\rho)$-complete. 
\end{theorem}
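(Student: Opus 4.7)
My plan is to exploit the $G_\rho$-invariant codimension-$1$ foliation $\mathfrak{F}$ of $X_\rho$ tangent to $V^\perp$, reducing the global completeness problem to a combination of a leafwise completeness result (to which Theorem~\ref{Theorem 1} applies) and a transverse completeness argument. The $G_\rho$-invariance of $\mathfrak{F}$ makes it descend to a codimension-$1$ foliation $\mathfrak{F}_M$ on $M$. Since the stabilizer in $G_\rho$ of any leaf of $\mathfrak{F}$ is $K\ltimes\Heis\subset \mathsf{L}_\u(n)$, the transition maps between $(G_\rho,X_\rho)$-charts of $M$, restricted to leaves of $\mathfrak{F}_M$, lie in $K\ltimes\Heis$. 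Hence each leaf of $\mathfrak{F}_M$ inherits a tangential $(\mathsf{L}_\u(n),\R^{n+1})$-structure, and by Theorem~\ref{Theorem 1} the leaves are $(\mathsf{L}_\u(n),\R^{n+1})$-complete. In particular, the restriction of the developing map $\Dev\colon \widetilde{M}\to X_\rho$ to any leaf $\widetilde{F}$ of the lifted foliation $\widetilde{\mathfrak{F}_M}$ is a diffeomorphism onto a leaf of $\mathfrak{F}$ in $X_\rho$.

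To upgrade this leafwise information to global completeness, I will treat the two subfamilies of non-flat homogeneous plane waves separately. In the subfamily where $V$ is $G_\rho$-invariant, the closed $1$-form $du$ on $X_\rho$ annihilating $V^\perp$ is likewise $G_\rho$-invariant (from $du=g(V,\cdot)$ and the fact that $G_\rho$ acts by isometries), and pulls back to a closed nowhere-vanishing $1$-form $\omega_M$ on $M$ defining $\mathfrak{F}_M$. By Fact~\ref{Fact: structure of the universal cover}, $\widetilde{M}\cong\R\times\widetilde{F}$, with the flow of a transverse vector field $Z$ satisfying $\widetilde{\omega}_M(Z)=1$ acting simply transitively on the leaf space. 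Together with the corresponding product structure $X_\rho\cong\R\times\R^{n+1}$ coming from the leaf-space fibration, and the observation that $u\circ\Dev$ is a primitive of $\widetilde{\omega}_M$ up to a constant, I can assemble the leafwise diffeomorphisms into a global diffeomorphism $\Dev\colon\widetilde{M}\to X_\rho$.

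In the remaining subfamily, $V$ is not $G_\rho$-invariant (only $\R V$ is), so $du$ does not descend and Fact~\ref{Fact: structure of the universal cover} does not apply directly. However, $\mathfrak{F}$ still carries a transverse structure: the $\R$-factor in $G_\rho=(\R\times K)\ltimes\Heis$ acts on the one-dimensional leaf space $X_\rho/\mathfrak{F}$ by affine rescalings, inducing a transverse $(\Aff(\R),\R)$-structure on $\mathfrak{F}_M$. I would establish a transversely-affine analogue of Fact~\ref{Fact: structure of the universal cover}: using the compactness of $M$ together with the projected holonomy representation $\pi_1(M)\to G_\rho/(K\ltimes\Heis)\hookrightarrow\Aff(\R)$, one obtains an analogous decomposition of $\widetilde{M}$ as a trivial bundle over the transverse base, from which the same leafwise-plus-transverse reasoning again produces a global diffeomorphism $\Dev\colon\widetilde{M}\to X_\rho$.

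The main obstacle I anticipate is this second subfamily. Controlling the transverse direction when the transverse structure is only affine (rather than given by a closed $1$-form) is delicate, since a priori $\Dev$ could cover only a proper $G_\rho$-saturated open subset of $X_\rho$. The rigidity provided by the explicit structure $G_\rho=(\R\times K)\ltimes\Heis$ and its concrete action on the leaf space, combined with the compactness of $M$, should suffice to rule this out, but requires a more careful analysis of the projected holonomy than is needed in the first subfamily.
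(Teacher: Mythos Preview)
Your overall strategy matches the paper's: use the $G_\rho$-invariant foliation $\mathfrak{F}$, apply Theorem~\ref{Theorem 1} to get leafwise completeness, then control the transverse direction via a $G_\rho$-invariant closed $1$-form and the argument of Corollary~\ref{Cor: Lu*}/Fact~\ref{Fact: structure of the universal cover}.

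Two differences are worth noting. For the first family, the paper takes a shortcut you do not: since both $g$ and $V$ are $G_\rho$-invariant, they pull back to $M$, so $M$ is itself a compact pp-wave, and one can simply cite the known completeness results for compact pp-waves. Your closed-$1$-form argument would also work, but is unnecessary here.

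For the second family, your ``main obstacle'' is based on a misconception. You assert that the $\R$-factor in $G_\rho=(\R\times K)\ltimes\Heis$ acts on the leaf space $X_\rho/\mathfrak{F}$ by affine rescalings, forcing a transverse $(\Aff(\R),\R)$-structure and a delicate analysis of the projected holonomy. In fact the paper observes that $K\ltimes\Heis$ acts trivially on the leaf space while the $\R$-factor acts simply transitively on it; identifying the leaf space with this $\R$-factor, the action is by \emph{translation}. Consequently there is a $G_\rho$-invariant closed $1$-form on $X_\rho$ with kernel $V^\perp$ --- not $du=g(V,\cdot)$, which indeed is only scale-invariant in this family, but the pullback of the translation-invariant $1$-form on the leaf space. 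With this $1$-form in hand, the transverse argument goes through exactly as in Corollary~\ref{Cor: Lu*}: one pulls it back to a closed nowhere-vanishing $1$-form on $M$, applies Fact~\ref{Fact: structure of the universal cover}, and concludes that $\Dev$ is a global diffeomorphism. No analysis of a non-unimodular affine holonomy is needed.
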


\begin{proof}
If $M$ is modeled on the first family, it admits a plane wave structure. To see this, consider the pullback of $V$ and the Lorentzian metric $g$ on the model space $X_\rho$ by the developing map $\mathsf{Dev}: \widetilde{M} \to X_\rho$. Since both $V$ and $g$ are invariant under the action of $G_\rho$, this induces a Lorentzian metric and a vector field on the universal cover $\widetilde{M}$, both invariant under the deck transformations by $\pi_1(M)$. Thus, $M$ inherits a Lorentzian structure, with a parallel null vector field, locally isometric to that of $X_\rho$, establishing that $M$ is a plane wave. 
This manifold is complete by \cite{leistner2016completeness, mehidi2022completeness}. 
In contrast, if $M$ is modeled on the second family, it is not a plane wave but rather a weakly plane wave. It has a parallel null line field $l$ instead of a parallel null vector field. The distribution $l^\perp$ is tangent to a lightlike totally geodesic foliation $\mathfrak{F}_M$. Even if $V$ is not preserved by the whole group $G_\rho$, it appears from the description of $G_\rho$ that the subgroup fixing a leaf is a subgroup of the affine unimodular group $\L_\u(n)$,\, $n+2=\dim X_\rho$. Thus,
the holonomy representation of an $\mathfrak{F}$-leaf takes values in $\L_\u(n)$. Consequently, $(M,\mathfrak{F}_M)$ is an $(\L_\u(n), \R^{n+1})$-foliated manifold. We claim that $M$ is $(G_\rho, X_{\rho})$-complete, despite being geodesically incomplete (the geodesic incompleteness comes from the model space!). Indeed, by Theorem \ref{Theorem 1}, each leaf in the universal cover $\widetilde{M}$ is mapped bijectively to a leaf in the model via the developing map. On the other hand, there is a $G_{\rho}$-invariant closed $1$-form on $X_\rho$, whose kernel is equal to $V^\perp$. 
Indeed, the space of $\mathfrak{F}$-leaves is affinely identified with the $\R$-factor in $G_\rho$, on which $G_\rho$ acts by affine transformations: $K\ltimes\Heis$ acts trivially, while the $\R$-factor acts by translation. 
As a result, there is a closed $G_\rho$-invariant $1$-from $\omega_0$ on the space of leaves. Pulling back $\omega_0$ via the projection map $X_{\rho}\to X_{\rho}/\mathfrak{F}$ yields the desired $1$-form on $X_\rho$.    As in the proof of Corollary \ref{Cor: Lu*}, this allows to define a flow on $\widetilde{M}$ that acts transversely on the foliation, mapping leaves to one another equivariantly with respect to the developing map. We conclude as in the proof of Corollary \ref{Cor: Lu*}.    
\end{proof}

\section{$(G,X)$-foliated completeness problem: affine lightlike geometry}\label{section: (G,X)-foliated completeness problem: affine lightlike geometry}
We relax the geometric structure $(\mathsf{L}_\u(n), \R^{n+1})$ by assuming that only the direction of the vector field $V:=\partial_{x_1}$ is preserved by the structure group. 
In other words, the data is an invariant line field rather than an invariant vector field. The geometry in this case is the affine lightlike geometry $(\mathsf{L}(n),\R^{n+1})$.
We denote again by $\mathcal{V}_M$ the foliation tangent to the line field induced on $M$ by this structure. 

We now ask whether a completeness result analogous to Theorem \ref{Theorem 1} holds for this geometric structure $(\L(n), \R^{n+1})$. 
The methods from the previous case of the $(\mathsf{L}_\u(n), \R^{n+1})$-geometry readily show that the developing map $\Dev: \widetilde{F} \to \R^{n+1}$ on the universal cover of some leaf $F$ is injective, and surjective from $\widetilde{F}/\widetilde{\mathcal{V}}_M$ onto the space of $\mathcal{V}$-leaves.  However, surjectivity may fail along the $\mathcal{V}_M$-leaves. This failure is due to the absence of an invariant vector field. Indeed, the key ingredient for surjectivity along the $\mathcal{V}_M$-leaves was the existence of a  vector field $V_M$, whose flow was both equivariant and complete. Formulating this observation we get the following

\begin{theorem}\label{Theorem 2: completeness with L-structure}
    Let $(M,\mathfrak{F})$ be a compact $(\L(n), \R^{n+1})$-foliated manifold. A leaf of $\mathfrak{F}$ is complete if and only if the geodesics tangent to $\mathcal{V}_M$ are complete. 
\end{theorem}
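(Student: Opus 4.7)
The plan is to transcribe the proof of Theorem \ref{Theorem 1}, observing that both the degenerate metric $h$ and the line field generated by $\partial_{x_1}$ remain invariant under $\L(n)$, and that these are all the model-side ingredients needed to build the auxiliary Riemannian submersion. The forward implication is immediate: if $F$ is $(\L(n),\R^{n+1})$-complete, then $\Dev:\widetilde{F}\to\R^{n+1}$ is a covering, and hence a diffeomorphism by simple connectedness of $\R^{n+1}$; its restriction to any $\widetilde{\mathcal{V}}_M$-leaf is then a diffeomorphism onto an affine line parallel to $\partial_{x_1}$, on which the affine parameter is complete.

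For the converse, after passing, if necessary, to an orientation double cover, I would choose a smooth vector field $V_M$ spanning the line field $\ell_M$ on $M$; this $V_M$ is only an auxiliary object, not one seen by the $(\L(n),\R^{n+1})$-atlas. Following Subsection \ref{subsection: Proof of Theorem 1} verbatim, one constructs a Riemannian metric $g$ on $M$ with $g(V_M,V_M)=1$ and $g|_{\mathcal{S}}=h_M$, and obtains, for every leaf $F$, a Riemannian submersion $\pi=\mathfrak{p}\circ\Dev:\widetilde{F}\to B\subset\R^n$. The arguments of Corollary \ref{cor: geodesically-complete} through Corollary \ref{Cor:fibre bundle} carry over unchanged: $\widetilde{F}$ is Riemannian-complete, horizontal lifts of affine lines force $B=\R^n$, and Hermann's theorem together with contractibility of the base yields a trivialization $\widetilde{F}\cong\R^n\times l$ whose connected fiber $l$ coincides with a single $\widetilde{\mathcal{V}}_M$-leaf.

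The only step that genuinely departs from the unimodular case is the surjectivity of $\Dev$ along the fibers, and this is precisely where the completeness hypothesis enters. The key observation is that, for any $\widetilde{\mathcal{V}}_M$-leaf $L$, the restriction $\Dev|_L$ is a local diffeomorphism from a connected $1$-manifold into an affine line of $\R^{n+1}$; since no such local diffeomorphism can have compact source, $L$ is diffeomorphic to $\R$ and $\Dev|_L$ is an injective immersion onto an open subinterval. Because $\Dev|_L$ conjugates the flat affine connection induced on $L$ to the standard one of the target line, a $\mathcal{V}_M$-geodesic supported on $L$ is complete if and only if $\Dev|_L$ is surjective; the hypothesis therefore promotes $\Dev|_L$ to a diffeomorphism for every $L$. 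Since $\Dev$ intertwines $\pi$ with $\mathfrak{p}$ and induces the identity on the common base $\R^n$, fiberwise bijectivity together with base bijectivity upgrades $\Dev$ into a global diffeomorphism $\widetilde{F}\to\R^{n+1}$, yielding leafwise $(\L(n),\R^{n+1})$-completeness. The main delicacy, in my opinion, lies in making the equivalence between completeness of $\mathcal{V}_M$-geodesics and surjectivity of $\Dev|_L$ fully precise; everything else is a direct adaptation of the proof in Section \ref{section: (G,X)-foliated completeness problem: affine unimodular lightlike geometry}.
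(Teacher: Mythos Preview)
Your proposal is correct and follows essentially the same approach as the paper, which merely sketches the argument in the paragraph preceding the theorem by saying that the methods of Theorem~\ref{Theorem 1} yield injectivity of $\Dev$ and surjectivity onto the space of $\mathcal{V}$-leaves, while surjectivity along the $\mathcal{V}_M$-leaves is precisely what is governed by the completeness hypothesis. Your write-up supplies the details the paper leaves implicit --- in particular the monotonicity argument for injectivity of $\Dev|_L$ (replacing the flow-equivariance used in the unimodular case) and the passage to a double cover to orient the line field --- but the overall strategy is identical.
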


It turns out that the incomplete situation can occur. In what follows, we give some examples illustrating 
this. We present two kind of examples: the first kind is an affine manifold with a codimension $1$ foliation that supports an $(\L(n), \R^{n+1})$-structure, and the second kind is a ``weakly pp-wave''.
\begin{example}[$\mathsf{Hopf^{\lambda}_1}\times \T^2$]\label{Example: affine L-str}
Let $\lambda >0$. Consider the open subset of $\R^3$ given by $U:=\{(v,x,u), v >0\}$, and take the quotient of $U$ by $\Gamma$ generated by 
\begin{align*}
 \gamma_1&: (v,x,u) \to (\lambda v,x,u)\\
 \gamma_2&: (v,x,u) \to (v,x+1,u)\\
 \gamma_3&: (v,x,u) \to (v,x,u+1).
\end{align*} 
Here, $\Gamma$ is a subgroup of $\Aff(\R^3)$ acting properly discontinuously and freely on $U$, with a compact quotient. So $\Gamma \backslash U$ is a compact affine manifold. It is the product of a Euclidean torus and an Hopf circle of affine length $\lambda$. 
Now, equip $\R^3$ with the Lorentzian flat metric $2du dv + dx^2$. The group $\Gamma$ does not preserve the metric, but it preserves the null direction $\R V$, where $V=\partial_v$, together with the degenerate Riemannian metric $dx^2$ on the leaves of $V^\perp$. Hence, the foliation tangent to $V^\perp$ induces a $(\L(1),\R^2)$-foliation on $\Gamma \backslash U$.  Note that if $\Gamma$ is a subgroup of the Poincar\'e group, i.e.  preserves the Lorentzian flat metric, then $U$ is necessarily $\R^3$ and the quotient is complete due to Carri\`ere's result. We see that in this case, having a Lorentzian ambient structure is not artificial regarding the foliated completeness question. 
\end{example}
\begin{example}[\textbf{Clifton-Pohl torus}]\label{Example: CP}
Let $(T_{CP}, \Bar{g}):=(\R^2 \smallsetminus \{(0,0)\}, g:=\frac{2dxdy}{x^2+y^2})/ (x,y) \sim 2 (x,y)$ denote the Clifton-Pohl torus. Let $\mathcal{N}$ be one of its two null foliations. Then, the tangent bundle of $\mathcal{N}$ defines a parallel null line field on $T_{CP}$. It is known that the leaves of the foliation are incomplete. And $\mathcal{N}$ has a natural affine structure, which is in fact a homothety structure $(\R^*_+, \R)$.
\end{example}

\begin{example}[\textbf{Higher dimensions}]\label{Example: CP higher dimension}
\textbf{(1)} Consider the product $M=T_{CP} \times \mathbb{S}^1$, with Lorentzian metric $\Bar{g} + \overline{dt^2}$.  Denote again by $\mathcal{N}$ one of the null foliations of  $T_{CP}$.  
Consider the lightlike and totally geodesic foliation $\mathfrak{F}:= \mathcal{N} \times \mathbb{S}^1$ of $M$. The line field $T\mathfrak{F}^\perp$ on $M$ is null and parallel. The leaves of $\mathfrak{F}$ are flat (with respect to the induced connection) and incomplete. So $M$ is a weakly pp-wave having incomplete leaves. 
A leaf of $\mathfrak{F}$ is either a cylinder or a torus. And the universal cover of a leaf develops onto $\R \times I$, where $ I$ is some open interval in $\R$ which is either affinely equivalent to $]0,1[$ or to $]0,+\infty[$ (both cases occur). 

Incompleteness occurs exactly along the null geodesics tangent to $\mathcal{N} \times \S^1$, and along spacelike geodesics not tangent to the $\mathbb{S}^1$-factor (a flat band of the form $\R \times I \subset \R^2$, with $I$ as above, contains spacelike lines that leave the band). 

\textbf{(2)} The previous example can be modified in order to have all $\mathfrak{F}$-leaves non-compact. Consider $X=\R^2 \smallsetminus \{(0,0)\} \times \R$ with Lorentzian metric $g + dt^2$. Its isometry group, up to finite index, is  $G=\R \times \R$, where the first $\R$-factor acts by a $1$-parameter group of homotheties on $(\R^2 \smallsetminus \{(0,0)\}, g)$, and the second $\R$-factor acts by translation on $(\R, dt^2)$. The group $G$ preserves the foliation $\mathcal{N} \times \R$ of $X$. Now, consider a discrete subgroup $\Gamma \subset G$ acting properly freely and cocompactly on $X$, and not intersecting the second $\R$-factor. The quotient $\Gamma \backslash X$ is a Lorentzian torus, where the induced foliation consists of leaves that are either planes or cylinders. By Remark \ref{Remark: leafwise completeness trivial cases}, one cannot find a quotient space where all the leaves of the foliation are simply connected. 

\textbf{(3)} Taking the product of $T_{CP}$ with a compact Riemannian manifold (of higher dimension) gives an example with non-flat $\mathfrak{F}$-leaves. 
\end{example}

\section{Pseudo-Riemannian variants and their duals}\label{section: L_u(k,1) geometry and its dual}

In this section, we will see that leafwise completeness may fail for the $(\L_\u(p,q), \R^{p+q+1})$-structure and its dual. We will construct explicit examples of incomplete foliations. But before that, we first consider the case of a codimension $0$ foliation, i.e. when the structure is defined on a compact manifold. We restrict to the case $q=1$.

\subsection{Compact $(\L_\u(n,1), \R^{n+2})$-manifolds}
Completeness of compact $(\L_\u(n,1), \R^{n+2})$-manifolds is an open question and a natural part of the Markus conjecture, as mentioned in the introduction. We also note that the method in \cite{carriere1989autour} does not apply here, even for $n=1$, due to the fact that the discompacity of $\L_\u(n,1)$ is $2$.

\subsection{Compact $(\L_\u^*(n,1), \R^{n+2})$-manifolds}
The dual structures were mentioned earlier by Carri\`ere in \cite[Theorem 3.2.1]{carriere1989autour}. Let $M$ be a compact $(\L_\u^*(n,1), \R^{n+2})$-manifold. This structure on $M$ gives rise naturally to a codimension $1$ totally geodesic Lorentzian foliation $\mathfrak{S}$ with flat leaves.  In \cite[Theorem 3.2.1]{carriere1989autour}, it is shown that the developing map is injective on each $\mathfrak{S}$-leaf. Here, we can use the extra invariant $1$-form to show that the developing map of $M$ is itself injective. 
\begin{fact}\label{Fact: d inj}
    Let $M$ be a compact $(\L_\u^*(n,1), \R^{n+2})$-manifold. The developing map $\Dev: \widetilde{M} \to \R^{n+2}$ is injective. As a result, the holonomy representation is injective, and the fundamental group is 
    isomorphic to a discrete subgroup of $\L_\u^*(n,1)$ acting properly freely on $\Dev(\widetilde{M})$.
\end{fact}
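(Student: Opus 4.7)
The plan is to upgrade the leafwise injectivity supplied by \cite[Theorem 3.2.1]{carriere1989autour} (invoked just above) to a global injectivity on $\widetilde{M}$ by exploiting the extra $\L_\u^*(n,1)$-invariant parallel $1$-form together with Fact \ref{Fact: structure of the universal cover}. First I would unpack the invariants on the model: $\L_\u^*(n,1)$ preserves the closed $1$-form $\omega = dx_{n+2}$ and the codimension one foliation $\mathfrak{S}_0$ tangent to $\ker\omega$, whose leaves inherit a flat Lorentzian structure of signature $(n,1)$. Pulling back by the developing map yields a nowhere vanishing closed $1$-form $\omega_M$ on $M$ and a codimension one totally geodesic foliation $\mathfrak{S}$ with flat Lorentzian leaves. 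The cited Carri\`ere-type theorem then asserts that $\Dev$ restricts to an injection on each leaf of the lift $\widetilde{\mathfrak{S}}$ in $\widetilde{M}$, a fact I would use as a black box.

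Next I would parametrize the space of $\widetilde{\mathfrak{S}}$-leaves by the $1$-form. Since $M$ is compact and $\omega_M$ is nowhere vanishing, choose a smooth vector field $Z$ on $M$ with $\omega_M(Z) = 1$. Fact \ref{Fact: structure of the universal cover} then yields a diffeomorphism $\widetilde{M} \cong \R \times \widetilde{F}$ in which the $\R$-factor records $\int \widetilde{\omega}_M$ along the flow of the lift $\widetilde{Z}$, while the $\widetilde{F}$-fibers are precisely the leaves of $\widetilde{\mathfrak{S}}$. Equivalently, two points $p, q \in \widetilde{M}$ lie on the same leaf of $\widetilde{\mathfrak{S}}$ if and only if $\int_\gamma \widetilde{\omega}_M = 0$ for some (equivalently, any) path $\gamma$ from $p$ to $q$. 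The injectivity of $\Dev$ then follows from a short cohomological comparison: because $\widetilde{\omega}_M = \Dev^*\omega$ and $\omega = dx_{n+2}$ is exact on $\R^{n+2}$, one has
\begin{equation*}
\int_\gamma \widetilde{\omega}_M \;=\; \int_{\Dev\circ\gamma} \omega \;=\; x_{n+2}(\Dev(q)) - x_{n+2}(\Dev(p)),
\end{equation*}
so $\Dev(p) = \Dev(q)$ forces $p$ and $q$ into the same leaf of $\widetilde{\mathfrak{S}}$, and the leafwise step then gives $p = q$.

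The remaining conclusions are routine: equivariance $\Dev(\gamma \cdot x) = \rho(\gamma)\Dev(x)$ together with the injectivity of $\Dev$ immediately implies injectivity of $\rho$, hence $\pi_1(M) \cong \rho(\pi_1(M)) \subset \L_\u^*(n,1)$, and the action on $\Dev(\widetilde{M})$ is properly discontinuous and free because the deck action on $\widetilde{M}$ is so and $\Dev$ is a $\pi_1$-equivariant local diffeomorphism. The only real content is the cohomological identification of leaves via $\omega$; once the Carri\`ere-type leafwise injectivity and Fact \ref{Fact: structure of the universal cover} are in hand, no serious obstacle is anticipated. The one point I would double-check is that \cite[Theorem 3.2.1]{carriere1989autour} is genuinely formulated for leaves of the foliation lifted to the universal cover, and not merely for $\mathfrak{S}$-leaves in $M$ with possibly nontrivial linear holonomy inside the leaf.
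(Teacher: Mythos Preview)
Your proposal is correct and is essentially the paper's own argument. The paper's proof invokes leafwise injectivity from \cite[Theorem 3.2.1]{carriere1989autour} and then says global injectivity ``follows identically to the second part of the proof of Corollary~\ref{Cor: Lu*}'': there one pulls back the invariant closed $1$-form, chooses $W$ with $\omega_M(W)=1$, applies Fact~\ref{Fact: structure of the universal cover}, and uses the equivariance $\Dev\circ\phi_{\widetilde W}^{\,t}=\phi_{e_{n+2}}^{\,t}\circ\Dev$ to see that $\Dev$ respects the leaf parameter. Your integration identity $\int_\gamma\widetilde\omega_M = x_{n+2}(\Dev(q))-x_{n+2}(\Dev(p))$ is just the infinitesimal form of that equivariance, so the two presentations coincide; your caveat about the precise formulation of Carri\`ere's result is the only point worth verifying, and the paper treats it as given.
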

\begin{proof}
We already know that $\Dev$ is injective on a $\mathfrak{S}$-leaf (\cite[Theorem 3.2.1]{carriere1989autour}).  Global injectivity of $\Dev$ follows identically to the second part of the proof of Corollary \ref{Cor: Lu*}. The rest is an immediate consequence.
\end{proof}
 
\begin{corollary}[Solvable implies complete]\label{Cor: dual geometry, solvable implies complete}
    Let $M$ be a compact $(\L_\u^*(n,1), \R^{n+2})$-manifold with a virtually solvable holonomy group. Then $M$ is complete.
\end{corollary}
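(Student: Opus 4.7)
My plan is to prove that $\Dev : \widetilde M \to \R^{n+2}$ is surjective, which combined with the injectivity given by Fact~\ref{Fact: d inj} yields completeness. By that fact I identify $\widetilde M$ with an open subset $\Omega \subset \R^{n+2}$ on which $\Gamma := \pi_1(M)$ acts, as a discrete subgroup of $\L_\u^*(n,1)$, properly freely with compact quotient $\Gamma\backslash\Omega = M$; the task is to show $\Omega = \R^{n+2}$. I would first reduce the algebraic structure on $\Gamma$: being virtually solvable, finitely generated, and linear, $\Gamma$ is virtually polycyclic by Mostow's theorem on solvable linear groups, so after replacing $M$ by a finite cover I may assume $\Gamma$ is polycyclic. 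Then the syndetic hull theorem embeds $\Gamma$ as a cocompact lattice in a closed, connected, simply connected solvable Lie subgroup $H \subset \L_\u^*(n,1)$, and a cohomological-dimension count against the aspherical $(n+2)$-manifold $\Omega$ forces $\dim H = n+2$.

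The key geometric input is the parallel $1$-form $\omega = dx_{n+2}$ preserved by $\L_\u^*(n,1)$, which pulls back to a closed, non-vanishing, parallel $1$-form $\omega_M$ on $M$. Its kernel is tangent to the codimension-one foliation $\mathfrak{S}$ whose leaves inherit a flat Lorentzian structure of signature $(n,1)$. The associated character $c : \Gamma \to \R$, sending $\gamma$ to the last coordinate of its translation part, is non-trivial (otherwise $M$ would fiber over the non-compact space $\R$, contradicting compactness), so $c(\Gamma)$ is either discrete $(\cong \Z)$ or dense in $\R$, and I would split the proof into these two cases.

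In the discrete case, $M$ fibers over $S^1$ with compact fibers, and each fiber is a compact flat Lorentzian manifold of signature $(n,1)$; by Carri\`ere's theorem \cite{carriere1989autour}, every such fiber is geodesically complete, so its universal cover identifies with the slice $\widetilde F_t := \Omega \cap \{x_{n+2}=t\} = \R^{n+1}$. Combined with the trivial product structure $\widetilde M \cong \R \times \widetilde F$ provided by Fact~\ref{Fact: structure of the universal cover}, this immediately gives $\Omega = \R \times \R^{n+1} = \R^{n+2}$.

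In the dense case I would exploit the syndetic hull: $c(H) = \R$ by connectedness, and the fact that the linear parts in $\L_\u^*(n,1)$ have determinant $\pm 1$ makes $M$ a compact affine manifold with a parallel volume form and virtually polycyclic holonomy. Invoking the solvable case of the Markus conjecture under a parallel volume form (via Fried--Goldman/Goldman--Hirsch-type syndetic hull techniques for affine crystallographic actions) then forces the $(n+2)$-dimensional solvable Lie group $H$ to act on $\R^{n+2}$ with an open orbit that must coincide with all of $\R^{n+2}$, whence $\Omega = \R^{n+2}$. The main obstacle is precisely this dense case: unlike the discrete case, it is not reachable by Carri\`ere's compact-leaf argument because the leaves of $\mathfrak{S}$ are non-compact (and dense in $M$), and one must use the parallel-volume-form machinery to promote the existence of an $H$-orbit through a point of $\Omega$ to transitivity of $H$ on all of $\R^{n+2}$.
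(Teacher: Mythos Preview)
Your proposal reaches the right conclusion, but the route is longer than necessary and the dense case is only a roadmap. The paper's proof dispenses with the discrete/dense dichotomy entirely. It argues as follows: $\widetilde{M}$ fibers over $\R$ (via the invariant $1$-form) with fiber a leaf of $\mathfrak{S}$, and each such leaf is contractible by Carri\`ere \cite[Proposition 3.2.2]{carriere1989autour}; hence $M$ is aspherical and $\mathsf{cd}(\pi_1(M))=\dim M$. Since $\pi_1(M)$ is a virtually solvable discrete subgroup of a connected linear Lie group, it is virtually polycyclic (Milnor \cite[Lemma 2.1]{milnor1977fundamental}), so $\mathsf{Rank}(\pi_1(M))=\mathsf{cd}(\pi_1(M))=\dim M$. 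Because $\L_\u^*(n,1)$ has unimodular linear part, $M$ carries a parallel volume form, and Goldman--Hirsch \cite[Corollary 3.8]{goldman1986affine} then gives completeness directly.

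Your discrete case is a genuinely different and self-contained argument: when $c(\Gamma)\cong\Z$, the $\mathfrak{S}$-leaves descend to compact flat Lorentzian manifolds, and Carri\`ere's completeness theorem for those finishes the job without invoking the Goldman--Hirsch machinery. That is a nice observation. But your dense case simply invokes ``the solvable case of the Markus conjecture under a parallel volume form,'' which is precisely the Goldman--Hirsch result the paper uses --- and that result already covers \emph{both} cases uniformly, so the split buys nothing globally. In particular, the step ``promote an open $H$-orbit to transitivity on $\R^{n+2}$'' is exactly the non-trivial content of \cite{goldman1986affine}; you cannot get it from the syndetic hull and $\dim H = n+2$ alone. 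A minor point: you use asphericity of $\Omega$ in the cohomological-dimension count without justifying it; the paper obtains it from the fibering over $\R$ with contractible (not merely simply connected) fiber, again via Carri\`ere.
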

\begin{proof}
    First, observe that $M$ is aspherical. Indeed, $\widetilde{M}$ fibers over $\R$ with a contractible fiber (the fibers are the leaves of $\mathfrak{S}$, which are contractible by \cite[Proposition 3.2.2]{carriere1989autour}). This yields $\mathsf{cd}(\pi_1(M))=\dim(M)$, where $\mathsf{cd}(\pi_1(M))$ is the cohomological dimension of $\pi_1(M)$. On the other hand, by Fact \ref{Fact: d inj} we get that $\pi_1(M)$ injects into a discrete subgroup of the connected Lie group $\L_\u^*(n,1)$. However, by assumption $\pi_1(M)$ is virtually solvable, then by \cite[Lemma 2.1]{milnor1977fundamental} $\pi_1(M)$ is even virtually polycyclic. In particular, we get  $\mathsf{Rank}(\pi_1(M))=\mathsf{cd}(\pi_1(M))=\dim(M)$. Since $M$ admits a parallel volume form, the claim follows from \cite[Corollary 3.8]{goldman1986affine}.
\end{proof}
\begin{corollary}
     Let $M$ be a compact $(\L_\u^*(1,1), \R^{1+2})$-manifold. Then $M$ is complete. 
\end{corollary}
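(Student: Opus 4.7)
The plan is to reduce the statement to Corollary \ref{Cor: dual geometry, solvable implies complete}: since that corollary guarantees completeness of a compact $(\L_\u^*(n,1), \R^{n+2})$-manifold as soon as its holonomy group is virtually solvable, the task for $n=1$ reduces entirely to verifying this solvability condition. The first ingredient is Fact \ref{Fact: d inj}, which (applied with $n=1$) embeds $\pi_1(M)$ as a discrete subgroup of $\L_\u^*(1,1)$ via the holonomy representation. Thus it suffices to prove that $\L_\u^*(1,1)$, as an abstract Lie group, is itself virtually solvable, because any subgroup of a virtually solvable group is virtually solvable.

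The core of the argument is then a direct inspection of the Lie-group structure of $\L_\u^*(1,1)$. By definition, its identity component has the form $(\SO_0(1,1) \ltimes \R^2) \ltimes \R^3$, where $\SO_0(1,1)\cong \R$ is the connected component of hyperbolic rotations acting on $\R^2$ via the standard one-parameter group, and $\R^3$ accounts for the translations. I would exhibit the normal chain $\{e\} \triangleleft \R^3 \triangleleft \R^2 \ltimes \R^3 \triangleleft (\R\ltimes \R^2)\ltimes \R^3$ whose successive quotients are $\R^3$, $\R^2$, and $\R$, all abelian, which shows that $\L_\u^*(1,1)^0$ is solvable. Since $\O(1,1)$ has exactly four connected components, $\L_\u^*(1,1)^0$ has finite index in $\L_\u^*(1,1)$, and consequently $\L_\u^*(1,1)$ is virtually solvable. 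Combining this with Fact \ref{Fact: d inj} yields virtual solvability of the holonomy group of $M$, and Corollary \ref{Cor: dual geometry, solvable implies complete} then delivers completeness.

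The main obstacle, honestly, is minimal: once the two earlier results are in place, the proof amounts to a routine bookkeeping of connected components. The one subtlety worth flagging is that this argument is sharply dimension-sensitive: it works for $n=1$ precisely because $\O(1,1)$ is (virtually) abelian, but it breaks down for $\L_\u^*(n,1)$ with $n\geq 2$, since $\O(n,1)$ then contains a non-solvable semisimple piece (for instance, $\SO_0(2,1)\cong \PSL(2,\R)$ when $n=2$). So the present strategy cannot be expected to extend beyond $n=1$ without genuinely new input on the image of the holonomy representation.
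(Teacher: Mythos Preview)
Your proof is correct and follows essentially the same approach as the paper: invoke Fact \ref{Fact: d inj} to embed $\pi_1(M)$ into $\L_\u^*(1,1)$, observe that this group is (virtually) solvable, and conclude via Corollary \ref{Cor: dual geometry, solvable implies complete}. The paper simply asserts ``Clearly, $\L_\u^*(1,1)$ is solvable'' without further justification, whereas you spell out the derived series and are slightly more cautious in claiming only virtual solvability (in fact $\O(1,1)$, being an extension of an abelian group by a finite abelian group, is genuinely solvable); your closing remark on why the argument fails for $n\geq 2$ is a nice addition not present in the paper.
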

\begin{proof}
     Fact \ref{Fact: d inj} implies that the holonomy representation is injective. Hence, $\pi_1(M)$ is isomorphic to a subgroup of $\L_\u^*(1,1)$ . Clearly, $\L_\u^*(1,1)$ is solvable. So, the claim follows from Corollary \ref{Cor: dual geometry, solvable implies complete}.
\end{proof}
\subsection{The foliated problem} Here, we construct examples where leafwise completeness fails for the aforementioned geometries. 

 \begin{example}[\textbf{Compact manifold with incomplete flat Lorentzian leaves}]\label{Example: incomplete flat Lor}
Consider $\SOL= \R \ltimes \R^2 $, with its Lie brackets $[A,X]=X$, $[A,Y]=-Y$.  Endow it with the left invariant Lorentzian metric given in the bases $(A, X, Y)$ by
$$q=\begin{pmatrix}
0 & 1 & 0  \\
1 & 0 & 0  \\
0 & 0 & 1 
\end{pmatrix}.$$
This is in fact a (non-flat) plane wave metric on $\SOL$ $($see \cite[Theorem 1.4]{allout2022homogeneous}$)$.
The left invariant distribution induced by $\Span(A,X)$ is integrable, and a simple computation shows that it is flat, so that the induced metric is a left invariant Lorentzian flat metric on $\Aff(\R)$, which is then incomplete.  In fact, the above metric on $\Sol$ is given explicitly in coordinates in \cite[Theorem 1.4]{allout2022homogeneous}, and the incomplete flat foliation there corresponds to the levels of the $x$-coordinate. Now, consider the left quotient of $\SOL$ by some lattice $\Gamma$. The left action of $\Gamma$ preserves this foliation, which is then well defined in the compact quotient $\Gamma \backslash \Sol$. Its leaves are incomplete, and develop into the half Minkowski plane. This gives an example of an incomplete foliated $(G,X)$-structure, where the geometry here is the flat Lorentzian structure.  It is known that this structure is complete when the manifold is compact (Carri\`ere theorem). This example shows in particular that a foliated Carri\`ere theorem is not true. 
\end{example}

\begin{example}[\textbf{Compact incomplete  $(\L_\u(p,q),\R^{p+q+1})$-foliated manifold}]\label{Example: foliated L_u(1,1)-incomplete}
Consider the product $\SOL \times \R$, with the basis $(A,X,Y,V)$ of the Lie algebra, where $V$ is the basis for the $\R$-factor. Define a left invariant Lorentzian metric on it such that: $\SOL$ is endowed with the metric given in the previous example, $\R$ is Riemannian, and the two spaces are orthogonal. So in the basis $(A,X,Y,V)$, we have $$q=\begin{pmatrix}
0 & 1 & 0 &0 \\
1 & 0 & 0  &0\\
0 & 0 & 1 &0\\
0&0&0&1
\end{pmatrix}.$$ The leaves of the foliation tangent to the left invariant distribution induced by $\Span(A,X,V)$ are flat Lorentzian, and they admit a parallel spacelike vector field $V$. Hence, the leaves are modeled on the $(G,X)$-structure, where the group structure in the basis $(V,A,X)$ is given by $$G=\left\{\begin{pmatrix}
1 & 0 & 0  \\
0 & e^{t} & 0  \\
0 & 0 & e^{-t}
\end{pmatrix}\ltimes \R^3 \  | \ t\in \R\right\}.$$ This is a subgeometry (stiffened) of the $(\L_\u(1,1), \R^3)$-geometry. Now, taking a cocompact lattice of the form $\Gamma\times \Z \subset \SOL\times \R$ gives a compact, locally homogeneous, $(\L_\u(1,1), \R^3)$-foliated manifold. And  the previous example shows that the leaves are incomplete. 
By taking products of the previous example, and potentially including a flat Riemannian torus, we can construct a compact $\L_\u(p,q)$-foliated manifold with incomplete leaves for any $p,q\geq1$. 
Note that here, the holonomy representation of the fundamental group of any leaf preserves a Lorentzian metric, since it is a subgroup of $\O(1,1)\ltimes \R^3 \subset \L_\u(1,1)\ltimes \R^3$ (with a non-trivial projection to the $\O(1,1)$-factor). Observe also that the leaves develop injectively, so the structure is Kleinian!
\end{example}

\begin{remark}[\textbf{Compact incomplete $(\L_\u^*(p,q),\R^{p+q+1})$-foliated manifold}]\label{Example: foliated L_u*(1,1)-incomplete}
    The example above shows that Problem $\ref{Prob: foliated problem 1}$ also fails for the dual $(\L_\u^*(1,1), \R^3)$-geometry. Indeed, up to permuting the basis, the structure group becomes $$\left\{\begin{pmatrix}
e^t & 0 & 0  \\
0 & e^{-t} & 0  \\
0 & 0 & 1 
\end{pmatrix}\ltimes \R^3 \ | \ t\in \R\right\},$$ which is also a subgeometry of $(\L_\u^*(1,1), \R^3)$. From this example, one can obtain in the same way as in Example \ref{Example: foliated L_u(1,1)-incomplete} a compact $\L_\u^*(p,q)$-foliated manifold with incomplete leaves for any $p,q\geq1$. 
\end{remark}

\section{Foliated lightlike geometry: leafwise completeness}\label{section: Foliated lightlike geometry: leafwise completeness}

\subsection{Case of a totally geodesic lightlike foliation of a Lorentzian manifold}
Let $(M,g,\mathfrak{F})$ be a compact Lorentzian manifold with a totally geodesic lightlike foliation $\mathfrak{F}$. The metric induced on each leaf is a Riemannian degenerate metric. 
We denote by $\mathcal{N}$ the ($1$-dimensional) foliation tangent to the radical. The goal of this section is to prove the following result, which corresponds to the situation described in Item \textbf{(A)} of the introduction.

Let us mention that the foliation $\mathcal{N}$, when restricted to each leaf $F$ of $\mathfrak{F}$, is transversely  Riemannian \cite{zeghib1999geodesic}. In other words, the local flow of any vector field tangent to $\mathcal{N}$ preserves the induced degenerate Riemannian metric on $F$.

\begin{theorem}\label{Theorem: completeness of lightlike tot geod foliation}
    Let $(M,g,\mathfrak{F})$ be a compact Lorentzian manifold with a codimension $1$ totally geodesic lightlike foliation $\mathfrak{F}$. Then the leaves of $\mathfrak{F}$ are complete with respect to the induced connection if and only if the (null) geodesics tangent to $\mathcal{N}$ are complete.
\end{theorem}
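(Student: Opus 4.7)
The forward direction is immediate, since $\mathcal N$-geodesics lie in $\mathfrak F$-leaves. For the converse, assume the $\mathcal N$-geodesics are complete, fix a leaf $F$, and let $\gamma\colon[0,T)\to F$ be a maximally defined geodesic of the induced connection $\nabla^F$. Since $h$ is $\nabla^F$-parallel, the scalar $c:=h(\gamma',\gamma')$ is constant, and the radical $\R N\subset TF$ is itself $\nabla^F$-parallel because
\[h(\nabla^F_X N,Y)=X\!\cdot\! h(N,Y)-h(N,\nabla^F_X Y)=0\]
for every $X,Y\in TF$; in particular $\mathcal N|_F$ is totally geodesic, so if $c=0$ then $\gamma$ remains tangent to $\mathcal N$ and is complete by assumption. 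From now on we assume $c>0$.

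I would next import the Riemannian machinery used in the proof of Theorem \ref{Theorem 1}. Choose a continuous unit vector field $N$ tangent to $\mathcal N$, a transversal $\mathcal S\subset T\mathfrak F$ to $\R N$, and a unit vector field $Z$ transversal to $T\mathfrak F$, and endow $M$ with the Riemannian metric $g^*$ for which $g^*(N,N)=g^*(Z,Z)=1$, $\mathcal S\perp_{g^*}\{N,Z\}$, and $g^*|_\mathcal S=h|_\mathcal S$. By Fact \ref{Fact: complete riemannian leaf}, $(F,g^*|_F)$ is a complete Riemannian manifold. The $g^*$-orthogonal decomposition $\gamma'=aN+s$ with $s\in\mathcal S$ gives
\[|\gamma'|_{g^*}^2=a^2+h(s,s)=a^2+c,\]
so proving $T=\infty$ reduces to showing that $a(t):=g^*(\gamma'(t),N)$ stays bounded on $[0,T)$: then $\gamma$ will have finite $g^*$-length and will extend to $T$ by Cauchy completeness of $g^*|_F$, contradicting maximality of $T$.

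To bound $a$, I would work in adapted local coordinates $(v,x^1,\dots,x^n)$ on $F$ with $\partial_v=N$, after renormalizing $N$ in each chart so that $\nabla^F_N N=0$ (i.e.\ affinely parametrizing the $\mathcal N$-geodesics, always possible locally). Transverse Riemannianity of $\mathcal N|_F$ (cf.\ \cite{zeghib1999geodesic}) gives $h=h_{ij}(x)\,dx^i dx^j$, and the combination of $\nabla^F_N N=0$ with the parallelism of $\R N$ yields $\Gamma^k_{vj}=\Gamma^k_{vv}=\Gamma^v_{vv}=0$. The geodesic equation then splits into
\[\ddot x^k+\bar\Gamma^k_{ij}\dot x^i\dot x^j=0,\qquad \ddot v+2\Gamma^v_{vi}\dot v\,\dot x^i+\Gamma^v_{ij}\dot x^i\dot x^j=0,\]
where the $x$-equation is the Levi-Civita geodesic equation of the transverse Riemannian metric $\bar h=h_{ij}dx^idx^j$ (so $\dot x^i$ stays bounded with constant $\bar h$-speed $\sqrt c$), and the $v$-equation is \emph{linear} in $\dot v$. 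Extending the $F$-chart to a chart of $M$ by adding a coordinate transversal to $\mathfrak F$, the totally geodesic hypothesis identifies the symbols $\Gamma^v_{vi},\Gamma^v_{ij}$ with components of the ambient $\nabla$ whose Christoffels are smooth on the compact $M$, hence uniformly bounded. A Gronwall estimate on the linear ODE $\dot a=P(t)a+Q(t)$ bounds $a$ on any bounded interval, completing the argument.

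The main technical subtlety I foresee is upgrading these chart-dependent bounds on $\dot v$ to a chart-independent bound on the invariant quantity $|\gamma'|_{g^*}$. Since the local affine normalization of $N$ cannot in general be made global, one has to cover the compact $M$ by finitely many adapted charts with uniformly bounded Christoffels and verify that the linear-in-$\dot v$ structure of the fiber equation survives chart transitions. The completeness hypothesis on $\mathcal N$-geodesics enters precisely here: it guarantees that an affinely parametrized $N$ can be used to sweep entire $\mathcal N$-orbits, and the resulting patching yields the sought global bound on $|\gamma'|_{g^*}$.
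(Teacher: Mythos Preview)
Your strategy coincides with the paper's: decouple the leaf geodesic equation into a transverse Riemannian part (the $x$-equation) and a fibre part (the $v$-equation), control the $x$-part automatically via the transverse Riemannian metric, and use completeness of the $\mathcal N$-geodesics to handle $v$. The paper works in the same adapted coordinates $(u,v,x)$ and arrives at essentially the same system.

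Where your argument stops short is precisely the step you flag as a ``technical subtlety,'' and it is not minor: it is the heart of the proof. Your Gronwall bound on $\dot v$ is valid only while $\gamma$ remains in a single adapted chart, and the invariant quantity $a=g^*(\gamma',N)$ is tied to the global $g^*$-unit $N$, which does \emph{not} coincide with the locally renormalized geodesic field $\partial_v$ you use to kill $\Gamma^v_{vv}$. Since $\gamma$ may cross infinitely many charts in the $v$-direction on a finite time interval (this is exactly the scenario you are trying to exclude), the chart-by-chart bounds do not assemble into a global one without further input. Saying that $\mathcal N$-completeness ``allows the patching'' is correct in spirit, but the patching is the actual content.

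The paper does not patch Gronwall estimates across transitions; instead it builds a single extended coordinate domain. Starting from $p=\gamma(0)$, one chains compatible adapted charts $U_0,U_1,\ldots$ along the $\mathcal N$-geodesic $c$ through $p$, with coordinates agreeing on overlaps; completeness of $c$ is what guarantees the chain can be continued so that the $v$-coordinate sweeps all of $\R$. Having first arranged, by compactness of $M$, that every adapted chart carries a transverse convex ball of uniform radius $2$, one obtains a geodesic equation globally posed on $\{u_0\}\times\R\times B$ with $\overline{B_n(0,1)}\subset B$. Then the unit-speed transverse Riemannian geodesic $x(t)$ stays in $B$ for $t\in[0,1]$, and the $v$-equation --- now a single second-order ODE with smooth coefficients on the whole domain --- extends over $[0,1]$ because $v$ cannot leave $\R$. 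This ``big chart'' construction, together with the uniform-radius normalization, is what your final paragraph gestures at but does not carry out.
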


\subsubsection{\textbf{Geodesic equation}}
Let $\dim M = n+2$. 
Near any point $p \in M$, there exist local coordinates $(u, v, x = (x^1 , \ldots , x^n ))$ where the metric has the following form (see \cite[Proposition 2.2]{kundt}): 
$$g=2 du dv + H(u, v,x)du^2+ \sum_{i=1}^{n}W_i(u, v,x) du dx^i + \sum_{i,j} h_{ij}(u,x) dx^i dx^j.$$
We will refer to them as \textit{adapted local coordinates}. 

Set $V:=\partial_v$. 
It is easy to check that $\nabla_{V} V=0$, so the local vector field $V$ is geodesic: its flow parametrizes the null geodesics in $\mathcal{N}$. 
The leaves of $\mathfrak{F}$ are given by the levels of $u$.
For a geodesic tangent to $\mathfrak{F}$, the $u$-coordinate is constant along the geodesic, so we have $\dot{u}=0$. 
And the geodesic equation is the following 
\begin{align*}
    &\ddot{x}^k(t) + \Gamma_{ij}^k(u_0,x) \dot{x}^i \dot{x}^j = 0\\
    &\ddot{v}(t) + \Gamma_{ij}^u(u_0,v,x)  \dot{x}^i \dot{x}^j = 0\\
    &u=u_0
\end{align*}
The fact that the coefficients in $x(t)$ do not depend on $v$ follows from the fact that the foliation $\mathcal{N}$ is transversely Riemannian. 

\subsubsection{\textbf{Proof of Theorem \ref{Theorem: completeness of lightlike tot geod foliation}}} 
Let $E$ be a smooth $n$-dimensional  distribution on $M$, such that for any $p \in M$, $E_p$ is a non-degenerate Riemannian subspace of $T_p M$ tangent to $\mathfrak{F}$. This can be defined by taking an auxiliary Riemannian metric $h$, and setting $E:=T\mathcal{N}^{\perp_{h}} \cap T\mathcal{N}^{\perp_{g}}$. 

For every $p \in M$,  there exists a convex neighborhood $O_p$ of\, $0$\, in $E_p$. The exponential map $\exp_p: E_p \subset T_p M \to M$ restricted to $E_p$ is a diffeomorphism from $O_p$ onto its image in $M$. Define the $n$-submanifold  $S_p := \exp_p O_p$ through $p$.  For every $p \in M$, consider a normal coordinate system $(x^1,\ldots,x^{n})$ on $S_p$. If $\rho = \sum (x^i)^2$, then for $r_p >0$ sufficiently small, $B(p,r_p) = \{q \in S_p, \rho(q) < r_p \}$ is a normal neighborhood of $p$ in $S_p$ diffeomorphic to $B_{n}(0,r_p)$, an open ball of $\R^{n}$ of center $0$ and radius $r_p$. 
Then, we define adapted local coordinates in a neighborhood $U_p$ of $p$: 
\begin{align*}
\phi_p=	(u,v,x=(x^1,\ldots,x^{n})): U_p \to I_p \times J_p \times B_{n}(0,r_p),
\end{align*}
where $I_p$ and $J_p$ are open intervals of $\R$.  
Since $M$ is compact, a continuity argument shows that there exists $r>0$ such that for any $p \in M$, we can take $B(p,r_p)$ to be of uniform radius $r>0$.

\begin{observation}\label{Obs: convex balls} 
$(1)$ Consider adapted local coordinates $(u,v,x)$ on $U$, with the associated coordinate vector fields $Z=\partial_u, V=\partial_v$.	The choice of other local coordinates on $U' \subset U$ gives rise to  coordinate vector fields $Z_1=\partial_{u_1}, V_1=\partial_{v_1}$ such that, when restricted to a leaf of $\mathfrak{F}$, $V_1$ is a rescaling of $V$ by some constant. Indeed, since both $Z$ and $Z_1$ map a leaf of $\mathfrak{F}$ to a leaf of $\mathfrak{F}$, then $Z_1= f(u) Z + W$ for some $\mathfrak{F}$-invariant (local) function $f$ and some (local) vector field $W$ tangent to $\mathfrak{F}$. Since $V_1$ is determined by the equation $g(Z_1,V_1)=1$, the claim follows.\medskip

$(2)$ Given adapted local coordinates on $U$, for any $v$, the subset $\{0\} \times \{v\} \times B_n(0,r)$ is also an open ball of radius $r$ for the induced  Riemannian metric on it. This follows from the fact that the foliation $\mathcal{N}$, when restricted to each leaf $F$ of $\mathfrak{F}$, is transversely Riemannian. More explicitly, the local flow of the vector field $\partial_v$, which is tangent to $\mathcal{N}$ in $U$, preserves the induced degenerate Riemannian metric on $F \cap U$.
\end{observation}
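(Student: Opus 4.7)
The observation consists of two independent claims, and the plan is to treat them separately, exploiting in both cases the rigid form of the metric in adapted coordinates. Claim (1) will follow from elementary linear algebra applied to the defining normalization $g(\partial_u,\partial_v)=1$, together with the fact that $\partial_v$ spans the radical of the induced leaf metric. Claim (2) will follow from the coordinate manifestation of the transverse Riemannian property, namely that the coefficients $h_{ij}$ in the adapted form of $g$ are independent of $v$.

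For claim (1), I would proceed in three steps. First, since the level sets of $u$ and of $u_1$ both define the same local foliation $\mathfrak{F}$ on the overlap $U'$, the function $u_1$ depends only on $u$; differentiating the coordinate change yields the decomposition $Z_1 = f(u)\, Z + W$, where $f(u) = \partial u/\partial u_1$ and $W$ is tangent to $\mathfrak{F}$. Second, I would verify that $V_1$ is tangent to $\mathcal{N}$: in any adapted chart the metric takes the form $2\,du_1\,dv_1 + H_1\,du_1^2 + \sum W_i^{(1)}\,du_1\,dx_1^i + \sum h_{ij}^{(1)}\,dx_1^i\,dx_1^j$, so $\partial_{v_1}$ is tangent to the leaves, null, and $g$-orthogonal to the $\partial_{x_1^i}$; hence it lies in the radical of the induced leaf metric, which is precisely $T\mathcal{N}$. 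Since $\mathcal{N}$ is one-dimensional, we can write $V_1 = \phi\, V$ on each leaf. Third, I would impose the normalization $g(Z_1, V_1) = 1$: using $g(Z, V) = 1$ and $g(W, V) = 0$ (the latter because $V$ spans the radical of the leaf metric while $W \in T\mathfrak{F}$), this reduces to $f(u)\,\phi = 1$, giving $\phi = 1/f(u)$. Since $f$ depends only on $u$, the rescaling factor $\phi$ is constant along each leaf, which is the claim.

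For claim (2), the key point is that in adapted coordinates the induced degenerate Riemannian metric on the leaf $\{u=0\}\cap U$ is $\sum_{i,j} h_{ij}(0,x)\,dx^i\,dx^j$, and this is manifestly independent of $v$ — equivalently, $\mathcal{L}_{\partial_v} h_{ij} = 0$, which is the coordinate translation of the transverse Riemannian property of $\mathcal{N}_{|F}$. The slice $\Sigma_v := \{0\}\times\{v\}\times B_n(0,r)$ is transverse to $T\mathcal{N}$, so the restriction of the leaf metric to $\Sigma_v$ is a genuine Riemannian metric; its coordinate expression, being $v$-free, makes the identity in the $x$-coordinates an isometry between $\Sigma_v$ and $\Sigma_{v_p}$ for any $v$, where $v_p$ is the $v$-coordinate of $p$. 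By construction of the adapted chart around $p$, the $x$-coordinates extend the normal coordinates chosen on $S_p$, so $\Sigma_{v_p}$ coincides with the normal ball $B(p,r)\subset S_p$ as a Riemannian submanifold. Transporting this identification through the $v$-translation isometry yields the claim.

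The principal delicate point is the bookkeeping at the construction stage: one must verify that adapted coordinates can be chosen around $p$ so that the $x$-coordinates restrict to the preselected normal coordinates on $S_p$. This is a routine consequence of building adapted coordinates by flowing along $\partial_v$ and a complementary transverse vector field starting from $S_p$; once this compatibility is in place, no further use of compactness or of the totally geodesic hypothesis is needed, and both assertions reduce to the elementary computations sketched above.
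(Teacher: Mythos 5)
Your proof is correct and follows essentially the same route as the paper: the decomposition $Z_1=f(u)Z+W$ from the fact that both charts are adapted to $\mathfrak{F}$, the normalization $g(Z_1,V_1)=1$ for part (1), and the $v$-independence of $h_{ij}$ (equivalently, that the flow of $\partial_v$ preserves the induced degenerate metric) for part (2). You usefully make explicit the step the paper leaves implicit — that $V_1$ lies in the radical of the leaf metric and is therefore pointwise proportional to $V$ before the normalization fixes the factor to be $1/f(u)$ — but this is a refinement of the same argument, not a different one.
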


\noindent We can assume  without loss of generality that $r=2$. 
\medskip

\noindent \textbf{Notation:} Denote by $h_{u}$ the  Riemannian metric induced on the slice $\{u\} \times \{v\} \times B_{n}(0,2)$, for some $v$ (it is independent of $v$).

\begin{remark}\label{Remark: x(t)}
Let $\gamma$ be a geodesic tangent to $\mathfrak{F}$. Let $(u,v,x)$ be a system of coordinates around $\gamma(0)$, as above. Then, the $h_u$-norm of $\dot{x}(t)$ is independent of the coordinate system, since it coincides with $g(\dot{\gamma}(t),\dot{\gamma}(t))$. 
And the second order differential equation on $x(t)$ reads
\begin{equation}\label{x(t)}
    \nabla^{h_{u}}_{\dot{x}(t)} \dot{x}(t)=0.
\end{equation}

It is  the geodesic equation associated to the Riemannian metric $h_u$, with $u:=u(\gamma(0))$. 
\end{remark}

\begin{lemma}
Assume that the null geodesics tangent to $\mathcal{N}$ are complete.
Let $\gamma$ be a (maximal) geodesic tangent to $\mathfrak{F}$.  If $g(\gamma'(0),\gamma'(0))=1$, then $\gamma(t)$ is defined on $[0,1]$.  
\end{lemma}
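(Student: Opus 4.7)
The plan is to reduce the problem to a linear second-order ODE for the $v$-component of $\gamma$ by using the complete $V$-flow to extend the adapted chart along the null direction, and then bound the coefficients of the ODE via the recurrence structure of $V$ together with the transverse Riemannian property.

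First I would fix adapted coordinates $(u,v,x)$ on $U_p = I\times J\times B_n(0,2)$ centered at $p = \gamma(0)$. Using the hypothesis that null geodesics tangent to $\mathcal{N}$ are complete, the flow $\phi^v_V$ of $V = \partial_v$ is globally defined in $v$, so I can extend the chart via
\[
\Phi\colon I\times\mathbb{R}\times B_n(0,2)\to M,\qquad \Phi(u,v,x) := \phi^v_V\bigl((u,0,x)\bigr),
\]
so that $\gamma(t) = \Phi(u_0,v(t),x(t))$ throughout the maximal domain $[0,T)$. In these coordinates the adapted form of the metric is preserved, with $h_{ij}(u,x)$ still $v$-independent by the transverse Riemannian property. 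By Remark~\ref{Remark: x(t)}, $x(t)$ is a unit-speed $h_{u_0}$-geodesic, hence defined on $[0,2)$ with $x(t)\in B_n(0,1)$ for $t\in [0,1]$ thanks to the uniform normal radius $r=2$; in particular the components $\dot x^i$ are uniformly bounded.

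Writing $\gamma' = \dot v\, V + X$ with $X := \dot x^i\partial_i$, and using the recurrence $\nabla_Y V = \omega(Y) V$ (which exists because $T\mathcal{N}$ is invariant under parallel transport along the totally geodesic $\mathfrak{F}$, and satisfies $\omega(V)=0$ since $V$ is geodesic), a direct computation reduces the geodesic equation $\nabla_{\gamma'}\gamma' = 0$ to
\[
\ddot v(t) + 2\,\omega\bigl(X(t)\bigr)\,\dot v(t) + \phi(t) = 0,\qquad \phi(t) := \Gamma^v_{ij}(u_0,v(t),x(t))\,\dot x^i\dot x^j,
\]
using that $\nabla_X X \in T\mathcal{N}$ thanks to the Riemannian geodesic equation for $x(t)$ killing the $\partial_k$-terms and to $\Gamma^u_{ij}=0$ in adapted coordinates.

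The main obstacle is to bound $\omega(X)$ and $\phi$ uniformly on $[0,T)$ even as $|v(t)|$ grows. For the first, since $\omega(V)=0$ the value $\omega(X)=\omega(\gamma')$ factors through the Riemannian quotient $T\mathfrak{F}/\mathbb{R}V$, where the class $[X]$ has $g$-norm $|X|_g = 1$; combined with boundedness of $\omega$ on the compact $M$ this gives $|\omega(X(t))| \leq C_1$. For $\phi$ the key identity is $\Gamma^v_{vi} = \tfrac14\partial_v W_i$, so $\partial_v W_i = 4\,\omega(\partial_i)$; by the transverse Riemannian property, the class $[\partial_i]\in T\mathfrak{F}/\mathbb{R}V$ is preserved in $g$-norm along the $V$-flow (its $g$-norm equals $\sqrt{h_{ii}(u_0,x)}$, bounded on $B_n(0,1)$), so $|\partial_v W_i|$ is uniformly bounded; integrating in $v$ gives $|W_i(u_0,v,x)|\le K_0 + K_1|v|$ and similarly $|\partial_i W_j(u_0,v,x)|\le K_0' + K_1'|v|$. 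Substituting in
\[
\Gamma^v_{ij} = \tfrac14(\partial_i W_j + \partial_j W_i) - \tfrac12\partial_u h_{ij} - \tfrac14 W_l h^{lk}(\partial_i h_{jk} + \partial_j h_{ik} - \partial_k h_{ij}),
\]
and noting that the $v$-independent pieces are bounded, one obtains $|\phi(t)| \leq C_2 + C_3|v(t)|$. The equation for $v$ is then a linear second-order inequality $|\ddot v|\leq 2C_1|\dot v| + C_2 + C_3|v|$; applying Gronwall to $|v|+|\dot v|$ on $[0,1]$ yields uniform bounds on both $v$ and $\dot v$. Hence $|\gamma'(t)|_h$ stays bounded on $[0,T)$, and standard ODE continuation on the compact manifold $M$ gives $T \geq 1$.
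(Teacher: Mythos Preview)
Your approach shares the paper's essential first step---extending the adapted chart in the $v$-direction by exploiting completeness of the null geodesics tangent to $\mathcal N$---but then diverges: where the paper patches genuine adapted charts $U_i$ along the null geodesic $c$ into a single coordinate system on $\{0\}\times\R\times B$ and argues that the resulting ODE has $x(t)$ and $v(t)$ defined simultaneously on $[0,1]$, you instead try to derive quantitative bounds on the coefficients of the $v$-equation and close with Gronwall.

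The gap is in your boundedness claims for $\omega$ and for $W_i,\,\partial_iW_j$. The $1$-form $\omega$ you introduce is \emph{not} an object on the compact manifold $M$: it is the connection $1$-form of the line bundle $T\mathcal N$ in the particular trivialization $V=\partial_v$, and that trivialization lives only on your non-compact extended domain $I\times\R\times B_n(0,2)$. For any other local section $N=\lambda^{-1}V$ of $T\mathcal N$ one has $\omega_N=\omega_V-d\log\lambda$, so ``boundedness of $\omega$ on the compact $M$'' has no invariant meaning. What is globally bounded is $\omega_N$ for a \emph{fixed} smooth global section $N$, but then the discrepancy $d\log\lambda$ can blow up along your extended chart precisely when the Riemannian length of $\partial_v$ (equivalently $\lambda$) degenerates as $v\to\pm\infty$---a behaviour which completeness of the $\mathcal N$-geodesics does not rule out. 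Concretely: your identity $\partial_vW_i=4\,\omega(\partial_i)$ is correct, but nothing you have written controls the right-hand side uniformly in $v$; consequently the linear-in-$|v|$ bounds you deduce for $W_i$ and $\partial_iW_j$, and hence the estimate $|\phi(t)|\le C_2+C_3|v(t)|$, are unjustified. (For $\partial_iW_j$ you would moreover need a bound on $\partial_i\omega(\partial_j)$, one derivative beyond what you discuss.) The Gronwall step is fine \emph{given} those bounds, but you have not established them.

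The paper sidesteps this entirely: rather than attempting coefficient bounds on a non-compact model, it builds a sequence of \emph{compatible} adapted charts $U_i$ on $M$ along the complete null geodesic $c(t)$, shows this sequence can be continued so that the $v$-range becomes all of $\R$, and then identifies $\widetilde\gamma$ with the restriction $\gamma_{|\bigcup_iU_i}$ of the honest geodesic of $M$. The conclusion is that $\widetilde\gamma$ cannot leave $\{0\}\times\R\times B$ while $x(t)\in B_n(0,1)$, without any appeal to growth estimates for $W_i$ or $\omega$.
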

\begin{proof}
Let $p:=\gamma(0)$. Let $F$ be the leaf containing $\gamma$. We denote by $\parallel \cdot \parallel$ the seminorm associated to the degenerate Riemannian metric on $F$. 
Consider the local chart $U_p$, with adapted coordinates 
 $(u,v,x) \in I \times J \times B_n(0,2)$ as above. 
We have $u(t) \equiv 0$.  So $u(t) \in I$, and it is defined as soon as  $x(t)$ and $v(t)$ are defined. By assumption, $\parallel \dot{x}(0) \parallel =1$. 
By Remark \ref{Remark: x(t)},  on its interval of definition, $x(t)$ coincides with the solution $y(t) \subset B_{n}(0,2)$ of the geodesic equation (\ref{x(t)}) associated to the Riemannian metric $h_{0}$, with initial conditions $y(t)=0$ and $\parallel \dot{y}(0) \parallel =1$.  This solution $y(t)$ exists on $[0,1]$, and $\parallel \dot{y}(1) \parallel = \parallel \dot{y}(0) \parallel=1$. However,  $v(t)$ may  leave the interval $J$ before $x(t)$ leaves the ball $B_{n}(0,1)$, meaning that $x(t)$, and consequently $\gamma_{\vert U_{p}}$, is not defined on $[0,1]$. Assume this is the case. 
We will show that $\gamma_{\vert U_{p}}$ extends to a geodesic defined on $[0,1]$. 

For this, let $c(t)$ be the maximal (parameterized) geodesic tangent to $\mathcal{N}$ at $p$.  Let $p_0 := c(t_0)$, and consider a neighborhood $U_{p_0}$ of $p_0$, on which we have adapted local coordinates $\phi_0=(u_0,v_0,x_0): U_{p_0} \to I_0 \times J_0 \times B_n(0,2)$. Since the  null geodesics tangent to $\mathcal{N}$ are complete, the (smooth) slices $S_0(v):=\phi_0^{-1}(I_0 \times \{v\} \times B_n(0,2)) \subset U_{p_0}$, for $v \in J_0$, converge to a submanifold $B_0$ of $M$ which is transversal to $\mathcal{N}$. By Observation  \ref{Obs: convex balls} $(2)$, the intersection of this submanifold with $F$ is an open ball of radius $2$ for the induced Riemannian metric on it. 
Let $p_1 := c(t_1) \in B_0$. Denote by $Z_0, V_0, X^1_0, \ldots, X_0^n$ the coordinate vector fields on $U_{p_0}$. 
These vector fields can be extended  to coordinate vector fields $Z_1, V_1, X^1_1, \ldots, X^n_1$ on some neighborhood $U_{p_1}$ of $p_1$, in such a way that the associated coordinates $(u_1,v_1,x_1) \in I_1 \times J_1 \times B_n(0,2)$ are adapted coordinates satisfying $u_1=u_0, v_1=v_0 , x_1=x_0$ on $U_{p_0} \cap U_{p_1}$. We say that these coordinates are compatible with those on $U_{p_0}$.

One can define this way a  sequence of compatible local charts $U_i:=U_{p_i}$, where $p_i:= c(t_i)$ is an increasing sequence of points on $c(t)$ such that $p_0=p$, $U_i \cap U_j \neq \emptyset$ if and only if $j=i+1$. Denote by $\phi_i=(u_i, v_i,x_i) \in I_i \times J_i \times B_n(0,1)$  the local adapted coordinates on $U_i$. We claim that such a sequence of open sets can be obtained with $t_i \to \infty$. To see this, define the slices $S_i(v) := \phi_i^{-1}(I_i \times \{v_i\} \times B_n(0,2)) \subset U_i$, and denote by $S(p_i)$ the slice containing $p_i$. Suppose that the above sequence does not satisfy $t_i \to \infty$. Since $c(t)$ is complete, the points $p_i$ converge to a point $p \in c(t)$, and the slices $S(p_i)$ (which are leaves of the foliation defined by the levels of $v_i$ in $U_i$) converge to a submanifold $S(p)$ through $p$. The intersection of this submanifold with $F$ is an open ball of radius $2$ for the induced Riemannian metric on it. 
Define adapted coordinates $(u,v,x)$ in a neighborhood $U$ of $p$, such that the slice through $p$ in $U$ given by the level of $v$ at $p$  coincides with $S(p)$. We call this `a good overlapping property'. Set $V=\partial_v$.
There exists some $U_n$ such that $U_n \cap U \neq 0$. By Observation \ref{Obs: convex balls} $(1)$, the restriction of $V$ to a leaf of $\mathfrak{F}$ is a rescaling of the restriction of $V_n$ to that leaf by some constant. Due to this and to the good overlapping property, the slices defined by the levels of $v$ in $U \cap U_n$ coincide with those defined by the levels of $v_n$. 
Thus, we can define new adapted coordinates on $U$ that are compatible with those on $U_n$, thereby extending the previous sequence beyond $p$.

By construction, the geodesic equations associated to two overlapping open sets $U_i$ and $U_{i+1}$ coincide on $U_i \cap U_{i+1}$, giving rise to a single geodesic equation:
\begin{align*}
    &\ddot{x}^k(t) + \Gamma_{ij}^k(0,x)\, \dot{x}^i \dot{x}^j = 0\\
    &\ddot{v}(t) + \Gamma_{ij}^{u}(0,v,x)\,  \dot{x}^i \dot{x}^j = 0\\
    &u=0
\end{align*}
where the coefficients are, this time, defined on $\{0\} \times \R \times B$, with the closure of $B_n(0,1)$ contained in $B$.
The fact that $v$ is defined on $\R$ follows from the fact that the geodesics tangent to $\mathcal{N}$ are complete. 

Thus, to the geodesic $\gamma_{\vert \bigcup_{i} U_i}$ corresponds a maximal geodesic $\widetilde{\gamma}$ in $\{0\} \times \R \times B$, defined on the same interval as $\gamma_{\vert \bigcup_{i} U_i}$, and satisfying the geodesic equation above.  For this geodesic $\widetilde{\gamma}$, it is clear that $x(t)$ is defined on $[0,1]$, and $v(t)$ is defined on the same interval as $x(t)$. Consequently, $\widetilde{\gamma}$, and therefore $\gamma$, is defined on $[0,1]$. 
\end{proof}

\begin{proof}[Proof of Theorem $\ref{Theorem: completeness of lightlike tot geod foliation}$]
This follows from the lemma above, and the fact that the $g$-norm of $\gamma'(t)$ is preserved on $\gamma$. 
\end{proof}
\subsection{Applications} As a consequence of Theorem \ref{Theorem: completeness of lightlike tot geod foliation}, we can state a leafwise completeness result in the  following situations
\begin{corollary}[\textbf{Leafwise completeness in presence of a parallel null line field}] 
 Let $(M,l)$ be a compact Lorentzian manifold with a parallel null line field $l$. Then the leaves of the foliation tangent to the distribution $l^\perp$ are complete if and only if the geodesics tangent to $l$ are complete.   
\end{corollary}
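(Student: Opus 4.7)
The plan is to reduce this statement to Theorem \ref{Theorem: completeness of lightlike tot geod foliation} by checking that its hypotheses hold. First, I would observe that because $l$ is parallel, the orthogonal distribution $l^\perp$ is also parallel: parallel transport is an isometry, so it preserves the orthogonal complement of a parallel line field. A parallel distribution is automatically involutive, so $l^\perp$ is integrable and defines a codimension $1$ foliation $\mathfrak{F}$ of $M$. Moreover, parallelism of $l^\perp$ is equivalent to $\mathfrak{F}$ being totally geodesic in $(M,g)$.

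Next, I would verify that $\mathfrak{F}$ is lightlike and that the $1$-dimensional radical foliation coincides with the foliation tangent to $l$. Since $l$ is null, we have $l \subset l^\perp$, so the restriction of $g$ to each leaf $F$ of $\mathfrak{F}$ is a degenerate Riemannian metric whose radical is exactly $l$ restricted to $F$. Hence $\mathfrak{F}$ is a codimension $1$ totally geodesic lightlike foliation of the compact Lorentzian manifold $(M,g)$, and the foliation $\mathcal{N}$ tangent to its radical is precisely the foliation tangent to $l$.

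With this setup I can apply Theorem \ref{Theorem: completeness of lightlike tot geod foliation} directly: the leaves of $\mathfrak{F}$ are geodesically complete with respect to the connection induced from the Levi-Civita connection if and only if the null geodesics tangent to $\mathcal{N}$ are complete; and these are exactly the geodesics tangent to $l$. This yields the desired equivalence.

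There is essentially no serious obstacle; the whole argument amounts to recognizing that the hypotheses of parallelism and nullness of $l$ put us exactly in the situation treated by Theorem \ref{Theorem: completeness of lightlike tot geod foliation}. The only mild subtlety worth noting is that $l$ is a line field rather than a vector field, so it may fail to be orientable; however, since Theorem \ref{Theorem: completeness of lightlike tot geod foliation} is phrased in terms of the foliation $\mathcal{N}$ and not of a chosen trivialization, this causes no difficulty.
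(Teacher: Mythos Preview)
Your proposal is correct and follows exactly the paper's approach: the paper states this corollary immediately after Theorem~\ref{Theorem: completeness of lightlike tot geod foliation} as a direct consequence, without spelling out a separate proof. You have simply made explicit the verification (parallelism of $l^\perp$, integrability, total geodesicity, and identification of $\mathcal{N}$ with the foliation tangent to $l$) that the paper leaves to the reader.
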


\begin{corollary}[\textbf{Leafwise completeness in presence of a null Killing field}]\label{Cor: Killing complete} 
 Let $(M,V)$ be a compact Lorentzian manifold with a null Killing field $V$, such that the distribution orthogonal to $V$ is integrable and defines a foliation $\mathfrak{F}$. Then the leaves of $\mathfrak{F}$ are complete.   
\end{corollary}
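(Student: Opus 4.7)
The plan is to reduce the statement to Theorem \ref{Theorem: completeness of lightlike tot geod foliation} by verifying its two hypotheses: that $\mathfrak{F}$ is a totally geodesic lightlike foliation of $M$, and that the geodesics tangent to its radical $\mathcal{N}$ are complete. Under the hypotheses, $V$ is itself tangent to $\mathfrak{F}$ (since $V$ is null), so $\mathbb{R}V$ lies in the radical of the degenerate metric induced on each leaf; as the leaves have codimension $1$ in $M$ and the ambient metric is Lorentzian, the radical is exactly $\mathbb{R}V$, and $\mathfrak{F}$ is lightlike with $\mathcal{N}$ the foliation tangent to $V$.

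First I would establish that $\mathfrak{F}$ is totally geodesic. Take two vector fields $X,Y$ tangent to $\mathfrak{F}$, i.e.\ in $V^{\perp}$. Differentiating $g(Y,V)=0$ and $g(X,V)=0$ gives
\begin{equation*}
g(\nabla_{X}Y,V)=-g(Y,\nabla_{X}V),\qquad g(\nabla_{Y}X,V)=-g(X,\nabla_{Y}V).
\end{equation*}
The Killing equation for $V$ yields $g(\nabla_{X}V,Y)+g(\nabla_{Y}V,X)=0$, so adding the two previous identities gives $g(\nabla_{X}Y,V)+g(\nabla_{Y}X,V)=0$. On the other hand, integrability of $V^{\perp}$ means $[X,Y]\in V^{\perp}$, hence $g(\nabla_{X}Y-\nabla_{Y}X,V)=g([X,Y],V)=0$. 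Combining the symmetric and antisymmetric parts yields $g(\nabla_{X}Y,V)=0$, so $\nabla_{X}Y\in V^{\perp}$, which is exactly the total geodesicity of $\mathfrak{F}$.

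Next I would check that the integral curves of $V$ are complete null geodesics, which will supply the completeness of $\mathcal{N}$. Since $V$ is null, the function $g(V,V)$ is identically zero, hence constant, so for every vector field $X$ we get $g(\nabla_{X}V,V)=\tfrac{1}{2}X(g(V,V))=0$; combining with the Killing skew-symmetry $g(\nabla_{V}V,X)=-g(\nabla_{X}V,V)=0$, we conclude $\nabla_{V}V=0$. Thus the flow lines of $V$ are (reparametrized) null geodesics. Moreover $V$ is a Killing field on the compact manifold $M$, so it is complete, and its integral curves are defined for all time. This means the null geodesics tangent to $\mathcal{N}$ are complete.

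With both hypotheses verified, Theorem \ref{Theorem: completeness of lightlike tot geod foliation} applies directly and gives the completeness of the leaves of $\mathfrak{F}$ with respect to the induced connection. No step presents a serious obstacle here: the only mild subtlety is the interplay between the Killing equation and integrability in the computation showing total geodesicity, but it is a short direct calculation.
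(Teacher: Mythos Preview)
Your proof is correct and follows exactly the approach the paper intends: the corollary is stated in the paper as an immediate application of Theorem \ref{Theorem: completeness of lightlike tot geod foliation}, without further argument, and you supply precisely the two verifications needed (total geodesicity of $\mathfrak{F}$ via the Killing equation plus integrability, and completeness of the $\mathcal{N}$-geodesics via $\nabla_V V=0$ and compactness). One cosmetic remark: once you have shown $\nabla_V V=0$, the integral curves of $V$ are already affinely parametrized geodesics, so the word ``reparametrized'' is unnecessary; every geodesic tangent to $\mathcal{N}$ is then an affine reparametrization of an integral curve of $V$, hence complete.
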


\section{On completeness of compact Lorentzian manifolds with a null Killing field}\label{section: On completeness of compact Lorentzian manifolds with a null Killing field} 
Let $(M,g,V)$ be a compact Lorentzian manifold with a null Killing field $V$. Recall that for any geodesic $\gamma$, we have $g(\dot{\gamma},V)=C$ constant along $\gamma$, and $C$ is called the Clairaut's constant. In this section, we deal with the global completeness problem, i.e. completeness of the whole manifold $M$. We have the following result from the previous section 
\begin{theorem}\label{Theorem: Killing complete} 
 Let $(M,V)$ be a compact Lorentzian manifold with a null Killing field $V$, such that the distribution orthogonal to $V$ is integrable and defines a foliation $\mathfrak{F}$. Then the leaves of $\mathfrak{F}$ are complete.   
\end{theorem}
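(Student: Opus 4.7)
The plan is to reduce this to Theorem \ref{Theorem: completeness of lightlike tot geod foliation} by verifying its two hypotheses for the foliation $\mathfrak{F}$ induced by a null Killing field: that $\mathfrak{F}$ is totally geodesic and lightlike, and that the null geodesics tangent to the radical foliation $\mathcal{N}$ are complete.

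First, since $V$ is null and $V\in V^\perp$, the vector field $V$ is tangent to $\mathfrak{F}$ and spans the radical of the induced metric on each leaf; hence $\mathfrak{F}$ is lightlike, and $\mathcal{N}$ is exactly the foliation by flow lines of $V$. I would next check that $\mathfrak{F}$ is totally geodesic. For $X,Y$ sections of $V^\perp$, the Killing equation gives $g(\nabla_X V, Y) = -g(\nabla_Y V, X)$, so
\begin{equation*}
g(\nabla_X Y, V) = -g(Y, \nabla_X V) = g(\nabla_Y V, X) = -g(V, \nabla_Y X).
\end{equation*}
Combining this with integrability of $V^\perp$, which via Frobenius says $g(\nabla_X Y - \nabla_Y X, V) = 0$, the two relations force $2\,g(\nabla_X Y, V)=0$, so $\nabla_X Y \in V^\perp$ and $\mathfrak{F}$ is totally geodesic.

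Second, I would verify the completeness of geodesics tangent to $\mathcal{N}$. The standard computation for a Killing field gives $\nabla_V V = -\tfrac{1}{2}\nabla g(V,V)$; since $V$ is null, $g(V,V)\equiv 0$, and hence $\nabla_V V = 0$, i.e. the integral curves of $V$ are (affinely parametrized) geodesics. Because $V$ is a smooth vector field on the compact manifold $M$, its flow is complete, so these geodesics are defined on all of $\mathbb{R}$. Every geodesic tangent to $\mathcal{N}$ has velocity proportional to $V$, so is an affine reparametrization of an integral curve of $V$, hence is also complete.

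Finally, with both hypotheses verified, Theorem \ref{Theorem: completeness of lightlike tot geod foliation} applies directly and yields that the leaves of $\mathfrak{F}$ are complete with respect to the induced connection. There is no real obstacle here: the statement is essentially a rephrasing of Corollary \ref{Cor: Killing complete}, and the only points requiring verification are the classical facts that a null Killing field is pregeodesic with $\nabla_V V = 0$, and that integrability of $V^\perp$ combined with the Killing equation forces $\mathfrak{F}$ to be totally geodesic. The deep content lies upstream, in the proof of Theorem \ref{Theorem: completeness of lightlike tot geod foliation}.
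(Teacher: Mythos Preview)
Your proof is correct and follows exactly the route the paper intends: the paper states this theorem as a direct restatement of Corollary~\ref{Cor: Killing complete}, which is itself recorded without proof as an application of Theorem~\ref{Theorem: completeness of lightlike tot geod foliation}. You have simply filled in the two verifications the paper leaves implicit (that $\mathfrak{F}$ is totally geodesic via the Killing equation plus integrability, and that $\nabla_V V=0$ so the $\mathcal{N}$-geodesics are complete by compactness), and these are exactly right.
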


We construct in this section an incomplete compact Lorentzian manifold with a null Killing field, which is locally homogeneous. We begin with some preliminaries on left invariant metrics on Lie groups. 

\subsection{Left invariant metrics on Lie groups}

Let $G$ be a connected Lie group endowed with a left invariant pseudo-Riemannian metric, $i.e$ its Lie algebra $\g$ is endowed with a non-degenerate inner product $q$. Given any curve $\gamma$ in $G$, we can consider the associated curve $y(t):=dL_{\gamma(t)}^{-1}(\Dot{\gamma}(t)) \in \g$. By Euler-Arnold theorem, the geodesics of $G$ are in one-to-one correspondence with the integral curves of the (Euler-Arnold) quadratic-homogeneous vector field on $\g$:
\begin{equation}\label{E-A}
    y\in \g\mapsto \dot{y}=\ad_y^*y\in \g
\end{equation}
\subsection*{Null Killing fields}  Let $\mathsf{v} \in \g$ such that $\ad_\mathsf{v}$ is skew symmetric with respect to $q$. Then the left invariant metric on $G$ is also invariant by the right multiplication by $\exp(t \mathsf{v})$. So the left invariant vector field $V$ generated by $\mathsf{v}$ is Killing, and it is everywhere null. Moreover, it acts isometrically (on the right) on $\Gamma \backslash G$, where $\Gamma \subset G$ is a cocompact lattice, defining a null Killing field on the compact quotient. 

\subsection*{Completeness of  geodesics orthogonal to $V$} It follows from the observation below that when $\mathsf{v}$ is null and $\ad_\mathsf{v}$ is skew symmetric, the left invariant distribution generated by the linear hyperplane $\mathsf{v}^\perp$ is lightlike and totally geodesic, and the geodesics tangent to it are exactly those orthogonal to the null Killing field $V$. We prove in Proposition \ref{Proposition: (LIM) completeness of  geodesics orthogonal to V} that those geodesics are complete (without assuming any compactness assumption).
\begin{observation}
When $\ad_\mathsf{v}$ is skew symmetric, the vector field $\dot{y}$ in $(\ref{E-A})$ is everywhere tangent to the affine hyperplanes parallel to $\mathsf{v}^\perp$.   
\end{observation}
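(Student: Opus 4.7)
The plan is to reduce the statement to the single scalar identity $q(\dot{y}, \mathsf{v}) = 0$, valid for every $y \in \g$. Indeed, tangency of the Euler--Arnold vector field $\dot{y} = \ad_y^* y$ to the foliation of $\g$ by affine hyperplanes parallel to $\mathsf{v}^\perp = \{x \in \g : q(x, \mathsf{v}) = 0\}$ means precisely that the vector $\dot{y}(y)$ lies in $\mathsf{v}^\perp$ at every point $y$, which is the relation $q(\dot{y}, \mathsf{v}) = 0$.

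First, I would unfold the definition of the metric adjoint $\ad_y^*$: for every $w \in \g$ one has $q(\ad_y^* y, w) = q(y, \ad_y w) = q(y, [y, w])$, and specializing to $w = \mathsf{v}$ gives
\begin{equation*}
q(\dot{y}, \mathsf{v}) \;=\; q(y, [y, \mathsf{v}]) \;=\; -\,q(y, [\mathsf{v}, y]).
\end{equation*}

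Next, I would invoke the hypothesis that $\ad_\mathsf{v}$ is skew symmetric with respect to $q$, namely $q([\mathsf{v}, a], b) + q(a, [\mathsf{v}, b]) = 0$ for all $a, b \in \g$. Setting $a = b = y$ yields $2\,q([\mathsf{v}, y], y) = 0$, hence $q(y, [\mathsf{v}, y]) = 0$. Combining with the previous display gives $q(\dot{y}, \mathsf{v}) = 0$ for every $y \in \g$, which is the desired tangency.

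The observation is a purely algebraic consequence of the definition of $\ad^*$ and of the skew-symmetry of $\ad_\mathsf{v}$; there is no genuine obstacle. The only mild point to keep in mind is that, because $\mathsf{v}$ is null, $\mathsf{v}$ itself belongs to $\mathsf{v}^\perp$, so the invariant affine hyperplanes contain the direction of $\mathsf{v}$ --- which is coherent with the fact used afterwards that the integral curves of $\dot{y}$ corresponding to geodesics orthogonal to $V$ sit in the hyperplane $\mathsf{v}^\perp$ itself.
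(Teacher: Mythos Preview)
Your proof is correct and follows exactly the same route as the paper: both compute $q(\ad_y^* y,\mathsf{v})=q(y,[y,\mathsf{v}])=-q(y,\ad_\mathsf{v} y)=0$ using the definition of the metric adjoint and the skew-symmetry of $\ad_\mathsf{v}$. The paper compresses this into a single line, while you spell out each step.
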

\begin{proof}
Indeed, one has $g(\ad^*_y y, \mathsf{v})=g(\ad_y \mathsf{v},y)=-g(\ad_\mathsf{v} y,y)=0$.   
\end{proof} 
\begin{proposition}\label{Proposition: (LIM) completeness of  geodesics orthogonal to V}
 Let $G$ be a Lie group with a left invariant metric such that there is $\mathsf{v} \in \g$ with $q(\mathsf{v},\mathsf{v})=0$ and  $\ad_\mathsf{v}$ skew symmetric. Then the integral curves of the quadratic-homogeneous vector field $(\ref{E-A})$ tangent to the linear hyperplane $\mathsf{v}^\perp$ are complete. Equivalently, geodesics orthogonal to the (left invariant) null Killing field generated by $\mathsf{v}$ are complete.  
\end{proposition}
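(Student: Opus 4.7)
My plan is to exhibit two first integrals of the Euler-Arnold flow \eqref{E-A} and then reduce the completeness question to a linear ODE with bounded coefficients. Since the metric is parallel along geodesics, the energy $q(y,y)$ is conserved along any integral curve. Since the skew-symmetry of $\ad_\mathsf{v}$ makes the left invariant vector field generated by $\mathsf{v}$ a Killing field, the Clairaut-type quantity $q(y,\mathsf{v})$ is also a first integral of \eqref{E-A} (this translates on the Lie algebra side the conservation of $g(\dot\gamma,V)$ along geodesics). For a curve tangent to $\mathsf{v}^\perp$ one therefore has $q(y(t),y(t))\equiv E$ and $q(y(t),\mathsf{v})\equiv 0$ for all $t$.

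I would then pick a splitting adapted to the null vector $\mathsf{v}$. Choose $\mathsf{w}\in\g$ with $q(\mathsf{v},\mathsf{w})=1$ and put $W:=\mathsf{v}^\perp\cap\mathsf{w}^\perp$, so that $\g=\R\mathsf{v}\oplus W\oplus\R\mathsf{w}$. In the Lorentzian context of Section \ref{section: On completeness of compact Lorentzian manifolds with a null Killing field}, the restriction $q|_W$ is positive definite. Writing $y\in\mathsf{v}^\perp$ as $y=a\mathsf{v}+x$ with $x\in W$, one computes $q(y,y)=q(x,x)$, so the conservation of energy forces $x(t)$ to remain on a fixed round sphere in $W$; in particular $x(t)$ is bounded for all time.

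The next step is to expand \eqref{E-A} in this splitting. Using $\ad_\mathsf{v}^*=-\ad_\mathsf{v}$ (skew-symmetry) and $[\mathsf{v},\mathsf{v}]=0$, a direct computation yields
\begin{equation*}
\dot y \;=\; \ad_y^* y \;=\; -a\,[\mathsf{v},x] \;+\; a\,\ad_x^*\mathsf{v} \;+\; \ad_x^* x,
\end{equation*}
which is \emph{linear} in $a$, with coefficients that depend only on $x$. Projecting this identity onto $\R\mathsf{v}$ and $W$ produces an ODE system of the form $\dot a=a\,L(x)+Q(x)$ and $\dot x=a\,P(x)+R(x)$, where $L,P$ are linear in $x$ and $Q,R$ quadratic in $x$. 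Since $x(t)$ is bounded, so are $L,P,Q,R$ evaluated along the solution. Gronwall's inequality then gives an a priori bound $|a(t)|\leq|a(0)|e^{C|t|}+C'$ on any finite time interval, so the maximal solution $y(t)$ stays in a compact subset of $\g$ on every finite interval. The standard escape-to-boundary criterion for ODEs then forces the integral curve to be defined on all of $\R$.

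The crucial point, and the main obstacle to overcome in writing this out carefully, is the disappearance of the would-be quadratic-in-$a$ term $a^2\,\ad_\mathsf{v}^*\mathsf{v}$: a genuine $a^2$ contribution would produce a Riccati-type ODE for $a$ and could cause blow-up in finite time. It is precisely the hypothesis $\ad_\mathsf{v}$ skew-symmetric that gives $\ad_\mathsf{v}^*=-\ad_\mathsf{v}$, and hence $\ad_\mathsf{v}^*\mathsf{v}=-[\mathsf{v},\mathsf{v}]=0$. The argument is purely Lie-algebraic, uses no compactness assumption on $G$, and needs no extra assumption beyond the fact that $q|_W$ is definite, which is automatic in the Lorentzian setting under consideration.
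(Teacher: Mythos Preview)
Your proposal is correct and follows essentially the same approach as the paper: both split $\mathsf{v}^\perp=\R\mathsf{v}\oplus W$ with $q|_W$ definite, use conservation of $q(y,y)$ to confine the $W$-component to a sphere, and then observe that $\ad_\mathsf{v}^*\mathsf{v}=0$ makes the remaining equation linear in the $\mathsf{v}$-component, from which completeness follows. Your write-up is slightly more explicit (full expansion of $\ad_y^*y$, Gronwall), but the idea is identical.
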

\begin{proof}
Let $(\mathsf{v},\mathsf{x}=(\mathsf{x_1},\ldots,\mathsf{x_n}),\mathsf{u})$ be a basis of $\g$ such that $(\mathsf{v}, \mathsf{x})$ generates $\mathsf{v}^\perp$. Let $y(t)$ be an integral curve of $\dot{y}$ tangent to $\mathsf{v}^\perp$, and write $y(t)=(v(t),x(t),u(t))$ in the above basis. The hyperplane $\mathsf{v}^\perp$ is lightlike with a transverse Riemannian quadratic form. 
Since the geodesic vector field is tangent to the levels of the quadratic form $q$, then on $\mathsf{v}^\perp$, it is tangent to levels of the form $\mathbb{S}^n \times \R$, where $\R$ is the direction of $\mathsf{v}$ and $\dim \mathsf{v}^\perp=n+1$. 
  In particular, the $x$-component is bounded, hence complete. On the other hand, one can check that $\ad^*_{\mathsf{v}} \mathsf{v} =0$; as a consequence, Equation (\ref{E-A}) on $y(t)$ yields an equation on $\dot{v}$ which is linear in $v$, hence $v(t)$ is also complete. 
\end{proof}
\subsection*{Dynamics of $\dot{y}$} We saw that the integral curves of $\dot{y}$ on the linear hyperplane $\mathsf{v}^\perp$ correspond to geodesics orthogonal to $V$, which are complete. On the other hand, the integral curves of $\dot{y}$ on the affine translates of $\mathsf{v}^\perp$ correspond to the geodesics with Clairaut's constant $C \neq 0$. After affine reparametrization, we can suppose $C=\pm 1$, so that the dynamics of the vector field $\dot{y}$ reduces to the ones on the two hyperplanes $\mathsf{v}^\perp \pm \mathsf{u}$, where $\mathsf{u} \in \g$ is such that $q(\mathsf{u},\mathsf{v})=1$. The incompleteness obtained in the next paragraph occurs on those hyperplanes. 
\subsection{Incomplete examples with a null Killing field}
\begin{example}[\textbf{An incomplete left invariant Lorentzian metric with a null Killing field}]\label{Example: LIM incomplete with null Killing}
Consider the Lie algebra $\g=\aff(\R)\oplus \R$. Let $(T,X,V)$ be a basis such that $[T,X]=X$. Let $q$ be the Lorentzian quadratic form on $\g$ given by $$q=\begin{pmatrix}

1 & 0 & 0  \\
0 & 0 & 1  \\
0 & 1 & 0 
\end{pmatrix},$$
in the basis $(T,X,V)$.
Since $V$ is central, $\ad_V$ is skew-symmetric with respect to $q$. Therefore, the left invariant vector field generated by $V$ is null and Killing.
The Euler-Arnold vector field in this case is $$w=\begin{pmatrix}
t \\
x  \\
v 
\end{pmatrix} \in \g\mapsto \ad^*_ww= \begin{pmatrix}

-xv \\
0  \\
tv 
\end{pmatrix}.$$
So, we have the system of equations
\begin{equation*}
    \begin{cases}
     \Dot{t}=-xv\\
     \Dot{x}=0\\
      \Dot{v}=tv
    \end{cases}
\end{equation*} Moreover, $q(\omega,\omega)=t^2+2xv= C$, $C \in \R$. For $x=1$, we get that $\Ddot{t}=-\Dot{v}=-tv=t\Dot{t}=(\frac{1}{2}t^2)'$. Hence $\Dot{t}=\frac{1}{2}(t^2 - C)$.  So $w(s)=(\frac{-2}{s}, \frac{2}{s^2}, 1)^T$ is the velocity curve of an incomplete lightlike geodesic on the Lie group $G=\Aff(\R)\times\R$ endowed with the left invariant metric coming from $q$. 
\end{example}
\begin{example}[\textbf{An incomplete compact Lorentzian manifold with a null Killing field}]
Let $\g=\sol\oplus \R$. Let $(T,X,Y,V)$ be a basis such that $[T,X]=X, [T,Y]=-Y$, and where $V$ generates the center. Consider the Lorentzian quadratic form $q$ given by $$q=\begin{pmatrix}
1 & 0 & 0 & 0 \\
0 &0 & 0 & 1\\
0 & 0 & 1 & 0\\
0 & 1 & 0 & 0
\end{pmatrix}$$
in the basis $(T,X,Y,V)$. As in the previous example, the left invariant vector field generated by $V$ is null and Killing.
In this case, the Arnold-Euler vector field is 
$$w=\begin{pmatrix}
t \\
x  \\
y \\
v
\end{pmatrix}\in \g\mapsto \ad^*_ww= \begin{pmatrix}

-xv+y^2 \\
0  \\
ty \\
tv
\end{pmatrix}.$$
In other words, we get the following system
\begin{equation*}
    \begin{cases}
    \Dot{t}=-xv+y^2\\
    \Dot{x}=0\\
    \Dot{y}=ty\\
    \Dot{v}=tv 
    \end{cases}    
\end{equation*}
We see that the geodesic field is tangent to the hyperplane $y=0$, which is then totally geodesic. On the other hand, this hyperplane is the subalgebra $\aff(\R) \oplus \R$ with the (incomplete) Lorentzian scalar product given in Example \ref{Example: LIM incomplete with null Killing} above.  Hence incompleteness. We obtain a locally homogeneous compact example by taking the quotient of $\SOL\times \R$ by a cocompact lattice. 
\end{example}
\begin{remark} In the previous example, the incomplete geodesics are timelike and lightlike.
\end{remark}

\section{Compact Lorentzian $3$-manifolds with an equicontinuous null Killing field}\label{section: Dimension 3: an example and a problem}
Let $(M,V)$ be a compact Lorentzian manifold of dimension $3$, with $V$ a null Killing field. In this section, we study the geodesic completeness of $M$. As pointed out in the introduction, the situation is different,  depending on the dynamics of $V$. The non-equicontinuous case being already known in dimension $3$, we focus on the equicontinuous case. 
In this case, the closure of the flow of $V$ in the isometry group is a torus. Denote it by $G$. \\

\paragraph{\textbf{Reading plan}} We begin by a study of the connected component of the isometry group in Paragraph \ref{subsection: On the isometry group}. We will see in particular that when it has dimension $\geq 3$, the manifold is flat, hence complete. An incomplete example is constructed in Paragraph \ref{subsubsection: dim G=1}, in the case where the flow of $V$ is periodic. The reader can proceed directly to Paragraph \ref{subsection: Existence of a global frame field} for this example (which is independent of Paragraph \ref{subsection: On the isometry group}). Completeness results are also given in Paragraphs \ref{subsubsection: dim G=2} and \ref{subsubsection: dim G=3}, in special situations where the action of $V$ is not periodic.

\subsection{Isometry groups}\label{subsection: On the isometry group}  The aim of this section is to prove the following result

\begin{proposition}\label{Prop: structure of G}
Let $(M,V)$ be a connected compact Lorentzian $3$-manifold with an equicontinuous null Killing field $V$, with closure $G$ in $\Isom(M)$. 
\begin{enumerate}
    \item If $\dim G \geq 3$, then the identity component of the isometry group is isomorphic to $\T^3$, in which the flow of $V$ is dense, and $M$ is a flat $3$-torus.
    \item If $\dim G =2$, then either the connected component of the isometry group is $G \simeq \T^2$, or, $M$ is flat and $\Isom^o(M)=\T^3$.
\end{enumerate}
\end{proposition}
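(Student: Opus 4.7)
The plan is to exploit the freeness of the $G$-action on $M$. Since $V$ is equicontinuous, $G=\overline{\{e^{tV}\}}$ is a compact connected abelian Lie subgroup of $\Isom(M)$, hence a torus $\T^k$ in which $\{e^{tV}\}$ is dense. Because $G$ is abelian, any isotropy $G_p$ satisfies $G_{gp}=gG_pg^{-1}=G_p$, so is independent of $p$; by effectiveness of the $\Isom(M)$-action it must be trivial, and $G$ acts freely on $M$ with orbits of dimension $k$.

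\textbf{Part (1).} Assuming $k\geq 3$, I would observe that each orbit has dimension $\geq 3 = \dim M$, hence is open; since it is also compact (as $G$ is compact), connectedness of $M$ forces a single orbit, so $M$ is equivariantly diffeomorphic to $G$. In particular $k=3$, $M\simeq\T^3$, and the $G$-invariant metric is translation-invariant, hence flat. The identity component of the isometry group of a flat Lorentzian $3$-torus is exactly the translation subgroup $\T^3$, so $\Isom^o(M) = G = \T^3$, and density of $\{e^{tV}\}$ in $G$ is automatic.

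\textbf{Part (2).} When $k=2$, the $\T^2$-action is free and $M$ is a principal $\T^2$-bundle over $\S^1$, in particular diffeomorphic to $\T^3$. Assume $H:=\Isom^o(M) \supsetneq G$, so $\dim H \geq 3$; the goal is to deduce $M$ flat with $H=\T^3$. Setting $\h:=\Lie(H)$, the equicontinuity of $V$ forces $\Ad(e^{tV})$ to lie in the compact image $\Ad(G)$, so it is bounded; hence $\ad V$ is semisimple with purely imaginary spectrum and $\h = Z_{\h}(V) \oplus \bigoplus_j R_j$, each $R_j$ a $2$-dimensional real rotation block. In particular $\dim \h \equiv \dim Z_{\h}(V) \pmod 2$, and $\dim Z_{\h}(V) \geq 2$ since $\g \subset Z_{\h}(V)$.

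When $\dim H = 3$, parity forces $Z_{\h}(V)=\h$, so $V$ is central in $\h$. Among $3$-dimensional Lie algebras with nontrivial center, $\heis_3$ is excluded because the closure of the flow of a central element in $\Heis_3$ is at most $1$-dimensional, contradicting $G\simeq\T^2$; hence $\h$ is abelian. Repeating the isotropy argument shows that the connected abelian $3$-dimensional Lie group $H$ acts freely on $M$, and a non-compact factor $\R\subset H$ would make the unique orbit non-compact, contradicting compactness of $M$. Thus $H=\T^3$, and the argument from part $(1)$ applied to $H$ yields that $M$ is a flat torus. The main obstacle is the case $\dim H \geq 4$: my plan there is to average an element $W\in\h\setminus\g$ over the compact torus $G$ to produce a $G$-invariant Killing field commuting with $V$; together with the $\ad V$-eigenspace decomposition this should enlarge $G$ to a compact abelian subgroup of $H$ of dimension $\geq 3$, again reducing to part $(1)$. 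Verifying that the $G$-average is genuinely nontrivial modulo $\g$ is the hard step; alternatively, one may invoke the classification of large isometry groups of compact Lorentzian $3$-manifolds to conclude.
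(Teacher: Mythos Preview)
Your freeness argument in the Setup is flawed: from $G$ abelian you only get $G_{gp}=G_p$ for $g\in G$, i.e.\ constancy of the isotropy \emph{along each orbit}, not on all of $M$; effectiveness then gives nothing. (Think of $\S^1$ rotating $\S^2$: abelian, effective, not free.) The fact that $V$ is nowhere vanishing only says the one-parameter subgroup is locally free, not the full torus. The paper's Lemma~\ref{Lemma: 3D flat} handles this by a generic-point argument: the zero sets of a basis of Killing fields have empty interior, so some point has a $3$-dimensional orbit, which is then open and closed. Your Part~(1) is easily repaired this way; the unproved claim that $\Isom^o$ of a flat Lorentzian $3$-torus is $\T^3$ is true but needs the short normalizer computation in $\Poi^{2,1}$ that the paper carries out (Observation~\ref{Observation: maximal abelian subgroup of Poi_1,2}).

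For Part~(2) with $\dim H=3$, your Lie-algebraic route is genuinely different and more elementary than the paper's: instead of invoking the Adams--Stuck/Zeghib structure theorem for $\widehat{\g}$, you use boundedness of $\Ad(e^{tV})$ to force $V$ central in $\h$, and then the fact that in any non-abelian $3$-dimensional Lie algebra with nontrivial center the center is $1$-dimensional (your phrasing singles out $\heis_3$, but the argument covers $\R\oplus\aff(\R)$ as well) to conclude $\h\cong\R^3$. That is clean. However, the final step ``$H$ abelian $\Rightarrow$ $H$ acts freely $\Rightarrow$ $H$ compact'' reuses the broken freeness claim. The correct order is: $\h$ abelian $\Rightarrow$ $M$ homogeneous under an abelian group (generic-point argument again) $\Rightarrow$ $M$ flat $\Rightarrow$ $\Isom^o(M)=\T^3$ via the normalizer computation. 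You cannot deduce compactness of $H$ before knowing $M$ is flat.

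The serious gap is $\dim H\geq 4$, which you yourself flag. The averaging idea is not enough: the $G$-average of $W\in\h$ is exactly its projection to $Z_{\h}(V)$ under the $\ad V$-decomposition, and there is no reason this projection should fall outside $\g$ (indeed, one can have $Z_{\h}(V)=\g$ with the rest of $\h$ sitting in rotation blocks). The paper's argument here is substantially heavier: it uses Frances's theorem that $\dim\Isom^o\geq 4$ on a compact Lorentzian $3$-manifold forces local homogeneity, then a case analysis on the isotropy dimension. The case $\dim I\geq 3$ gives constant curvature, ruled out by a separate lemma bounding $\dim\Isom$ for compact constant-curvature $3$-manifolds (via Kulkarni--Raymond for $\AdS$ and the structure of flat quotients). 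The case $\dim I=1$ forces $\dim\widehat{\g}=4$; the Adams--Stuck/Zeghib list together with $\T^2\subset\widehat{G}$ and solvability leaves only $\R\oplus\so(3)$, which is then excluded because $M$ would have non-contractible universal cover, contradicting the existence of a lightlike totally geodesic foliation (Fact~\ref{Fact: geod lightlike foliation implies contractible}). None of this is accessible by your averaging strategy alone.
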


We start with the following two lemmas

\begin{lemma}\label{Lemma: 3D flat}
Let $M$ be a connected compact Lorentzian $3$-manifold. If $\Isom(M)$ contains a $3$-dimensional connected solvable subgroup $H$, then $M$ is $H$-homogeneous. If $H$ is abelian, then $M$ is flat, and  the identity component of the isometry group is isomorphic to $\T^3$.
\end{lemma}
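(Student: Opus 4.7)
The plan is to prove homogeneity first, and then specialize to the abelian case to deduce flatness and identify $\Isom^o(M)$.

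For homogeneity, since $\dim H = \dim M = 3$ and $H$ acts effectively by isometries, the set $U \subset M$ of points with a $3$-dimensional $H$-orbit is open; I would first argue that $U$ is nonempty by ruling out the scenario in which every orbit has dimension at most $2$. In that bad case one would obtain a nowhere-zero field of isotropy subalgebras $y \mapsto \h_y$ with $\h_y \hookrightarrow \so(T_yM, g_y) \cong \so(2,1)$ (via the standard rigidity: a Killing field is determined by its $1$-jet at any point), and the combination of effectivity, solvability of $H$, and the fact that connected solvable subalgebras of $\so(2,1)$ have dimension at most $2$ yields a contradiction. Next, to show $M \smallsetminus U = \emptyset$, pick $y \in M \smallsetminus U$ and any nonzero $\xi \in \h_y$: the differential $d\xi^*_y$ acts on $T_y(Hy) \subset T_yM$ via the adjoint action of $\xi$ on $\h/\h_y$. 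When $H$ is abelian this adjoint action is trivial, so $d\xi^*_y$ fixes $T_y(Hy)$ pointwise; a short case analysis according to whether $T_y(Hy)$ is Lorentzian, Riemannian, or degenerate inside the ambient Lorentzian $3$-space $T_yM$ then forces $d\xi^*_y = \mathrm{id}$, and rigidity of Killing fields yields $\xi^* = 0$, a contradiction. In the non-abelian solvable case the adjoint action on $\h/\h_y$ is no longer trivial, and one must combine the restricted list of $3$-dimensional solvable Lie algebras with the pointwise constraint $\h_y \subset \so(2,1)$ (only abelian $1$-dim or a copy of $\aff(\R)$) to exclude the remaining configurations.

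Assuming $M$ is $H$-homogeneous with $H$ abelian, write $M = H/\Gamma$ for a discrete subgroup $\Gamma \subset H$, and lift $g$ to a left-$H$-invariant pseudo-Riemannian metric on $H$. For a connected abelian Lie group, left-invariance equals bi-invariance, and the Levi-Civita connection of a bi-invariant pseudo-Riemannian metric on a Lie group is $\nabla_X Y = \tfrac{1}{2}[X,Y]$, which vanishes identically here; hence $M$ is flat.

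To identify $\Isom^o(M) \cong \T^3$, I would invoke Carri\`ere's completeness theorem for compact flat Lorentzian manifolds together with Fried's classification in dimension $3$; combined with the transitive abelian action already at hand, these force $M$ to be a flat $3$-torus. The translation torus $\T^3$ sits inside $\Isom^o(M)$; any strictly larger identity component would produce a positive-dimensional family of linear isometries of $\R^{2,1}$ preserving the cocompact lattice, which is impossible since such a stabilizer is discrete. The main obstacle I foresee is the isotropy case-analysis ruling out singular orbits in the non-abelian solvable case; for the immediate application in Proposition~\ref{Prop: structure of G}, only the abelian case of homogeneity is needed, and there the rigidity-of-Killing-fields trick closes the argument cleanly.
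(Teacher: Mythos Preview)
Your argument is complete and correct in the abelian case, but there is a genuine gap for general solvable $H$. In the abelian case your isotropy-rigidity trick works: if $\xi\in\h_y\setminus\{0\}$, the isotropy representation on $T_y(Hy)\cong\h/\h_y$ is the adjoint action, hence trivial; an element of $\so(2,1)$ annihilating a $2$-plane vanishes (regardless of the signature of the plane, as your case split confirms), and the cases $\dim\h_y\geq 2$ are excluded because maximal abelian subalgebras of $\so(2,1)$ are $1$-dimensional. So in fact every point has trivial local isotropy and the orbit is all of $M$; your ``Step~1'' is unnecessary here. The remaining parts (flatness via $\nabla_XY=\tfrac12[X,Y]=0$, and the identification of $\Isom^o(M)$) match the paper's reasoning.

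For non-abelian solvable $H$, however, your sketch does not close. The adjoint action of $\h_y$ on $\h/\h_y$ can be nontrivial, so $d\xi^*_y$ need not fix $T_y(Hy)$; concretely, for $\h\cong\aff(\R)\oplus\R$ or $\h\cong\heis_3$ one can have a $1$-dimensional isotropy acting by a nilpotent or hyperbolic one-parameter subgroup of $\O(2,1)$ on $T_yM$ without any immediate contradiction at a single point. A case-by-case analysis over all $3$-dimensional solvable algebras and all signatures of $T_y(Hy)$ is doable but not short, and your ``Step~1'' contradiction (solvable subalgebras of $\so(2,1)$ have dimension $\leq 2$) does not rule out the scenario of generic $2$-dimensional orbits with $1$-dimensional isotropy. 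The paper avoids this entirely with a global argument you are missing: once one knows there is \emph{some} point $x_0$ with trivial local isotropy, its orbit $U_0=H/\Gamma_0$ is open in $M$ and hence has finite Lorentzian volume; for a connected \emph{solvable} Lie group, a discrete subgroup of finite covolume is automatically a cocompact lattice, so $U_0$ is compact, hence closed, hence all of $M$. This solvable-implies-(finite covolume $\Rightarrow$ cocompact) step is the key idea that makes the general case go through without any orbit-type case analysis.
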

\begin{proof}
Assume first that $H$ is compact, i.e. $H=\T^3$. Let $K_1, K_2, K_3$ be three transversal Killing fields generated by the action of $H$. Let $C_i$ be the set of zeros of $K_i$, and let $C=\bigcup_{i=1}^3 C_i$. Each of the $C_i$ has empty interior (this comes from the fact that the action of the isometry group is faithful). Consequently, $C$ is a closed subset of $M$ with empty interior. So let $x \in M \smallsetminus C$. Then the $H$-orbit of $x$ is an open and closed subset of $M$, hence equal to $M$. This implies that the $H$-action is locally free and transitive. 
Now, assume that $H$ is non-compact. By the construction above, there exists a point $x_0 \in M$ with trivial local isotropy. Hence, its orbit is a homogeneous open subset of $M$ (of finite volume). Since the action of $H$ on $U_0$ is locally free, this orbit is  given by $U_0=H / \Gamma_0$, where the isotropy group $\Gamma_0$ is a discrete subgroup of $H$.  
Now, since $H / \Gamma_0$ has finite volume (with respect to the volume element induced by the Lorentzian metric of $U_0$), and $H$ is solvable, the subgroup $\Gamma_0$ must be a cocompact lattice in $H$. Therefore, $U_0$ is compact, making it a closed subset of $M$. Finally, since $M$ is connected, we conclude that $U_0=M$. This proves the first part of the lemma. Now, if $H$ is abelian, then  $M$ is clearly flat. In this case, $M$ is flat and compact, hence complete. So its full isometry group is given by $\Gamma\backslash N(\Gamma)$, where $N(\Gamma)$ is the normalizer of $\Gamma:=\pi_1(M)$ in the Poincar\'e group $\mathsf{Poi}^{2,1}=\O(2,1)\ltimes \R^{2+1}$. As $H \subset \Isom(M)$, the normalizer $N(\Gamma)$ contains a $3$-dimensional connected abelian group, which must be the translation group $\R^3$ by Observation \ref{Observation: maximal abelian subgroup of Poi_1,2} below. Hence, the subgroup of translations $\R^3$ centralizes $\Gamma$, and this implies that $\Gamma$ consists of pure translations. 
\end{proof}
\begin{observation}\label{Observation: maximal abelian subgroup of Poi_1,2}
    The maximal dimension of an abelian connected subgroup of the Poincar\'e group $\Poi^{2,1}:=\Isom(\Mink^{2,1})$ is $3$. Moreover, it is identified with the subgroup of pure translations.
\end{observation}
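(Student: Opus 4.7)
The plan is to split any connected abelian subgroup $A \subset \Poi^{2,1} = \O(2,1) \ltimes \R^3$ via the natural projection $\pi : \Poi^{2,1} \to \O(2,1)$. Since $A$ is connected, $\pi(A)$ lies in $\SO^o(2,1)$, which is a three-dimensional semisimple Lie group (isomorphic to $\PSL(2,\R)$ up to covering) of real rank one. All connected abelian subgroups of $\SO^o(2,1)$ are at most one-dimensional (the Lie algebra $\so(2,1) \cong \sll(2,\R)$ admits no two-dimensional abelian subalgebra, as can be checked directly from brackets). Hence $\dim \pi(A) \in \{0, 1\}$, and the short exact sequence $0 \to A \cap \R^3 \to A \to \pi(A) \to 0$ yields $\dim A = \dim(A \cap \R^3) + \dim \pi(A)$.

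First I would treat the case $\pi(A) = \{I\}$. Then $A \subset \R^3$ and so $\dim A \leq 3$, with equality precisely when $A$ is the full translation subgroup, which is exactly the conclusion sought in this case. Otherwise $\dim \pi(A) = 1$ and $\pi(A) = \{L_t\}_{t \in \R}$ is a one-parameter subgroup. Using the semidirect-product law $(A_1, a_1)(A_2, a_2) = (A_1 A_2,\, A_1 a_2 + a_1)$, the commutator of a pure translation $(I, v) \in A \cap \R^3$ with an arbitrary element $(L_t, w) \in A$ reduces to the linear identity $L_t v = v$. Consequently $A \cap \R^3$ is contained in the common fixed subspace $\bigcap_{t \in \R} \mathrm{Fix}(L_t) \subset \R^3$.

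At this point I would invoke the standard classification of one-parameter subgroups of $\SO^o(2,1)$ into elliptic, hyperbolic, and parabolic types; in each of the three cases the common fixed subspace in $\Mink^{2,1}$ is a single line (timelike, spacelike, or null, respectively). This gives $\dim(A \cap \R^3) \leq 1$, so $\dim A \leq 2$ in this second case. Combining the two cases, $\dim A \leq 3$, and equality forces $A = \R^3$, the subgroup of pure translations. The only mildly delicate step is the three-case fixed-subspace verification, but it is entirely routine and poses no real obstacle; the conceptual content is the dimension bound on $\pi(A)$ coming from the semisimplicity of $\SO^o(2,1)$ combined with the elementary commutator computation in the semidirect product.
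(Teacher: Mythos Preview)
Your argument is correct. The paper actually states this observation without any proof, so there is no ``paper's own proof'' to compare against; your proposal supplies a complete justification where the authors left the claim as a bare assertion. The strategy you use---projecting to $\SO^o(2,1)$, bounding $\dim\pi(A)\le 1$ via the rank-one semisimplicity of $\so(2,1)$, and then controlling $A\cap\R^3$ through the commutator identity $(L_t-I)v=0$ together with the elliptic/hyperbolic/parabolic trichotomy---is the standard and natural one, and each step is sound.
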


\begin{lemma}\label{Lemma: constant curvature}
A compact Lorentzian $3$-manifold of constant curvature  has an isometry group of dimension $\leq 3$.
\end{lemma}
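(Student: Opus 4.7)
The plan is to use completeness of compact Lorentzian manifolds of constant curvature to identify $M$ with a quotient $\Gamma \backslash \widetilde{M}$ of one of the three Lorentzian space forms, and then to compute (or bound) the dimension of the centralizer of $\Gamma$ in the Killing algebra of the model.

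First, by Carri\`ere \cite{carriere1989autour} in the flat case and Klingler \cite{klingler1996completude} in the non-flat case, $M$ is geodesically complete, so its universal cover $\widetilde{M}$ is isometric to one of $\Mink^{2,1}$, $\dS^3$, or $\AdS^3$, with $\Gamma \subset \Isom(\widetilde{M})$ a cocompact lattice acting freely. The de Sitter model admits no compact quotients by the Calabi--Markus phenomenon (the isotropy $\O(2,1)$ is non-compact), so only the flat and anti-de Sitter cases remain. Any Killing field on $M$ lifts uniquely to a $\Gamma$-invariant Killing field on $\widetilde{M}$, and conversely, so the Lie algebra of $\Isom^o(M)$ is canonically identified with the centralizer of $\Gamma$ in $\mathfrak{isom}(\widetilde{M})$. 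It therefore suffices to show that this centralizer has dimension at most $3$ in each of the remaining two cases.

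In the flat case, write elements of $\Gamma \subset \O(2,1) \ltimes \R^3$ as pairs $(L, c)$ and an infinitesimal isometry as $(A, b) \in \so(2,1) \oplus \R^3$. The $\Gamma$-invariance translates to $LA = AL$ and $Lb - b = Ac$ for every $(L, c) \in \Gamma$. When the linear holonomy $\Hol(\Gamma) \subset \O(2,1)$ is trivial, $\Gamma$ is a rank-$3$ lattice of translations spanning $\R^3$, which forces $A = 0$ and leaves $b$ free, giving dimension exactly $3$. When $\Hol(\Gamma)$ is non-trivial, a case-by-case inspection of the closed subgroups of $\O(2,1)$ shows that the dimension of the common fixed subspace of $\Hol(\Gamma)$ in $\R^3$, plus the dimension of its centralizer in $\so(2,1)$, remains bounded by $3$.

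In the anti-de Sitter case, use the identification $\Isom^o(\AdS^3) \simeq (\widetilde{\SL(2,\R)} \times \widetilde{\SL(2,\R)})/Z$ acting by left and right multiplication, together with the Kulkarni--Raymond description of compact $\AdS^3$-manifolds: up to finite index, $\Gamma$ is the graph of a pair of representations $(\rho_L, \rho_R) \colon \pi_1(\Sigma_g) \to \widetilde{\SL(2,\R)} \times \widetilde{\SL(2,\R)}$, at least one of which is a cocompact Fuchsian representation, hence Zariski-dense in $\PSL(2,\R)$. Borel density then implies that the centralizer of that projection in $\widetilde{\SL(2,\R)}$ is contained in the (discrete) center, so the centralizer of $\Gamma$ in $\mathfrak{isom}(\AdS^3)$ contributes at most one $\sll(2,\R)$-factor, i.e.\ dimension at most $3$. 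The main obstacle lies in this anti-de Sitter analysis, as it relies on the non-trivial Kulkarni--Raymond classification together with Borel density; the flat case is elementary once completeness is invoked.
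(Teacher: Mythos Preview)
Your overall strategy matches the paper's: invoke Carri\`ere/Klingler for completeness, reduce to $\Mink^{2,1}$ or $\widetilde{\AdS^3}$, then bound the centralizer of $\Gamma$ in the Killing algebra of the model, using Kulkarni--Raymond in the anti-de Sitter case. In the $\AdS$ case your argument and the paper's are essentially identical: the paper phrases the key point as ``the isometry group of a compact hyperbolic surface is finite,'' which is exactly the consequence of Borel density you invoke. (You should, as the paper does, pass carefully from $\widetilde{\AdS^3}$ down to $\AdS^3$ via the center before applying Kulkarni--Raymond's graph description.)

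Where you diverge is the flat case. The paper argues by contradiction: assuming $\dim\Isom(M)\geq 4$, it forces $M$ to be $\Isom^o(M)$-homogeneous by an isotropy analysis (citing \cite{allout2022homogeneous}), and then cites \cite{baues} to conclude that $\Gamma$ consists of pure translations, so $\Isom^o(M)\cong\T^3$. You instead compute the centralizer directly from the equations $LA=AL$, $(L-I)b=Ac$. This is a legitimate alternative, but your non-trivial holonomy step is left as a bare ``case-by-case inspection.'' That inspection is genuinely short---for any non-identity $g\in\O(2,1)$ one checks $\dim\mathrm{Fix}(g)+\dim Z_{\so(2,1)}(g)\leq 3$, with the maximum $3$ attained only at involutions (reflections; note $-I$ cannot occur since $(-I,c)$ always fixes $c/2$)---but as written it is a gap: you have asserted the bound rather than verified it. Either carry out the few cases (hyperbolic, unipotent, elliptic, reflections in $\O(2,1)\smallsetminus\SO(2,1)$), or adopt the paper's route through homogeneity, which trades the elementary casework for two external citations.
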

\begin{proof}
In this case, $M$ is complete by \cite{carriere1989autour, klingler1996completude}, hence can be identified with a compact quotient of $\Mink^{2,1}$ or $\widetilde{\AdS^{3}}$. In both cases $M$ is locally homogeneous. Thus, the stabilizer in $\Isom(M)$ of a point $p \in M$  is isomorphic to the stabilizer of any other point.

We start with the flat case. In this case, $M=\Gamma\backslash \Mink^{2,1}$, where $\Gamma\subset \O(2,1)\ltimes \R^3$ is a discrete subgroup that acts properly, freely, and cocompactly. Assume $\dim \Isom(M) \geq 4$.  We will show that $M$ is homogeneous under the action of the isometry group. Let $I$ denote the isotropy of some point in $M$. If $\dim I=1$, then the orbits of the isometry group are all open, so all closed. Since $M$ is connected, it consists of a single orbit, so homogeneity follows. Now, if $\dim I\geq 2$, then, by \cite[Proposition 2.3]{allout2022homogeneous}, it is, in fact, the whole $\O(2,1)$ up to finite index. On the other hand, the isometry group of $M$ is given by $\Gamma \backslash N(\Gamma)$, where $N(\Gamma)$ is the normalizer of $\Gamma$ in $\O(2,1)\ltimes \R^3$.  In particular, the isotropy group $\O(2,1)$ normalizes $\Gamma$. This is impossible, since such a group $\Gamma$ must be finite. Finally, $M$ is homogeneous as claimed. In the latter case, $M$ is necessarily the quotient of $\R^3$ by a lattice $\Gamma$ of translations. This can be shown by elementary arguments (in dimension $3$), but we refer to \cite[Corollary 4.9]{baues} for this fact. The centralizer of $\Gamma$ is $\R^3$, so the identity component of the isometry group is isomorphic to $\T^3$.

In the anti-de Sitter case, $M$ is a compact quotient $\widetilde{\Gamma} \backslash \widetilde{\AdS^3}$ by a discrete subgroup $\widetilde{\Gamma} \subset \Isom(\widetilde{\AdS^3})$ acting properly, freely and cocompactly. Assume $\dim \Isom(M) \geq 4$. 
Then, the normalizer of $\widetilde{\Gamma}$ in $\Isom(\widetilde{\AdS^3})$ has dimension at least $4$.  
Denote by $Z$ the center of $\Isom(\widetilde{\AdS^3})$, it is isomorphic to $\Z$, and $\Isom(\widetilde{\AdS^3})/Z =\Isom(\AdS^3)$. Denote by $\pi: \Isom(\widetilde{\AdS^3}) \to \Isom(\AdS^3)$ the natural projection. 
A covering of $M$ also has an isometry group of dimension $\geq 4$. So, up to taking a finite cover, we can assume that $\widetilde{\Gamma} \subset \Isom^{\mathsf{o}}(\widetilde{\AdS^3})$. 
Then, by \cite[Theorem 7.2]{kulkarni}, $\widetilde{\Gamma}$ intersects the center $Z$ non-trivially, which guarantees that the quotient group $\Gamma:= \widetilde{\Gamma}/ \widetilde{\Gamma} \cap Z$ is a subgroup of $\Isom(\AdS^3)$ that acts properly, freely and cocompactly on $\AdS^3:= \widetilde{\AdS^3}/Z$. The manifold $M$ is a (finite) cover of $\overline{M}:=\Gamma \backslash \AdS^3 = \widetilde{\Gamma} Z \backslash \widetilde{\AdS^3}$.
By assumption,  $\widetilde{\Gamma}$ has a normalizer, denoted by $C$,  of dimension at least $4$ in $\Isom(\widetilde{\AdS^3})$.  
Now, the fact that the kernel of the projection $\pi$ is discrete implies that $\Gamma$ has a normalizer of dimension at least $4$ in $\Isom(\AdS^3)$, given by the projection of $C$.  
The advantage of considering $\overline{M}$ is that we have a better understanding of its algebraic form. Indeed, by \cite[Theorem 5.2]{kulkarni}, up to conjugacy, there exists a (torsion free) cocompact lattice $\Gamma_0$ in $\PSL_2(\R)$, and a representation $\rho: \Gamma_0 \to \PSL_2(\R)$, such that $\Gamma=\{(x, \rho(x)), x \in \Gamma_0\}$. Let $p_1:\PSL_2(\R) \times \PSL_2(\R) \to \PSL_2(\R)$ be the projection on the first factor. Since $\dim C \geq 4$, $\Gamma_0$  has at least a $1$-dimensional normalizer in $\PSL_2(\R)$, given by $p_1(C)$. 
This normalizer acts on the compact hyperbolic surface $\Gamma_0 \backslash \H^2$ by (at least) a $1$-dimensional group of isometries, contradicting the fact that the isometry group of a compact hyperbolic surface is finite.
\end{proof}

Recall that the foliation tangent to $V^\perp$ is lightlike and totally geodesic. For the proof of Proposition \ref{Prop: structure of G}, we need the following general fact
\begin{fact}[Theorem 11, \cite{zeghib1999geodesic}]\label{Fact: geod lightlike foliation implies contractible}
Let $M$ be a compact Lorentzian $3$-manifold with a $C^0$ lightlike geodesic foliation. Then the universal cover of $M$ is diffeomorphic to $\R^3$.     
\end{fact}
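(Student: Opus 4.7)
The strategy is to show that $\widetilde M$ admits a smooth fibration over $\R$ whose fibers, the lifted leaves of $\mathfrak F$, are each diffeomorphic to $\R^{2}$; combining these yields $\widetilde M\cong\R\times\R^{2}\cong\R^{3}$. The argument splits naturally into a topological step (producing a closed defining $1$-form for $\mathfrak F$) and a geometric step (identifying the universal cover of a leaf with $\R^{2}$).

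Up to replacing $M$ by a finite cover, which does not affect the diffeomorphism type of $\widetilde M$, I may assume $\mathfrak F$ is transversely oriented and that the radical $\mathcal N\subset T\mathfrak F$ of the induced degenerate metric on the leaves is an orientable line field. Because $\mathfrak F$ is totally geodesic and the degenerate metric is leafwise parallel, the integral curves of $\mathcal N$ are null geodesics of $M$, and the Lorentzian metric descends to a well-defined, $\mathcal N$-invariant Riemannian metric on the quotient line bundle $T\mathfrak F/\mathcal N$. These two pieces of geometric data---a canonical $1$-dimensional null subfoliation and a canonical Riemannian structure on the leafwise quotient---are the tools I will exploit.

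The main technical step is to construct a closed nowhere-vanishing $1$-form $\omega$ on $M$ with $\ker\omega=T\mathfrak F$. I would first rule out Reeb components: a Reeb component would produce a compact leaf with non-trivial contracting linear holonomy, which contradicts the holonomy-invariance of the transverse Riemannian structure on $T\mathfrak F/\mathcal N$. Absent Reeb components, a Sacksteder-type argument for codimension-$1$ foliations of compact $3$-manifolds furnishes a closed transversal to $\mathfrak F$. The Lorentzian metric also equips the normal bundle $TM/T\mathfrak F$ with a canonical transverse measure invariant under the holonomy pseudogroup---obtained by pairing a transverse vector with a chosen generator of $\mathcal N$ via $g$---which, combined with the closed transversal, integrates to the desired closed $1$-form $\omega$. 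Applying Fact \ref{Fact: structure of the universal cover} to $\omega$ then yields $\widetilde M\cong\R\times\widetilde F$, where $\widetilde F$ is the universal cover of any $\mathfrak F$-leaf $F$.

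To conclude that $\widetilde F\cong\R^{2}$, I would exploit the intrinsic geometry of a leaf $F$. The $1$-dimensional foliation $\mathcal N$ on $F$ has complete leaves (they are null geodesics tangent to $\mathcal N$, which are complete by the leafwise techniques of Section \ref{section: Foliated lightlike geometry: leafwise completeness}) and is transversely Riemannian inside $F$. Consequently, the lifted foliation $\widetilde{\mathcal N}$ on $\widetilde F$ has complete $\R$-leaves and a Hausdorff Riemannian $1$-dimensional leaf space; simple-connectedness of $\widetilde F$ forbids this leaf space from being $\S^{1}$, so $\widetilde F/\widetilde{\mathcal N}\cong\R$ and hence $\widetilde F\cong\R\times\R=\R^{2}$. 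The principal obstacle is the construction of the closed defining $1$-form---specifically, ruling out Reeb components---which requires carefully relating the linear holonomy of a hypothetical compact Reeb leaf to the transverse Riemannian rigidity provided by the Lorentzian metric on $T\mathfrak F/\mathcal N$. A secondary technical point is the bare $C^{0}$ regularity of $\mathfrak F$, which would likely be handled by first extracting the canonical transverse continuous foliated structure and smoothing equivariantly, or by arguing directly in the $C^{0}$ category using foliation-topological inputs.
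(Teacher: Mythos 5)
First, note that the paper offers no proof of this statement: it is imported verbatim as Theorem 11 of \cite{zeghib1999geodesic}, so there is no in-paper argument to compare yours against. Judged on its own merits, your proposal has a genuine gap at its central step.

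The gap is the construction of a closed nowhere-vanishing $1$-form $\omega$ with $\ker\omega=T\mathfrak{F}$. This is false for lightlike geodesic foliations in general. A standard counterexample is a compact quotient of $\widetilde{\AdS}^3$ (e.g.\ the unit tangent bundle of a closed hyperbolic surface with the Killing-form metric): the weak stable foliation of the Anosov geodesic flow is a lightlike totally geodesic foliation, it has no Reeb components, yet its cylinder leaves carry nontrivial contracting linear holonomy. Such a foliation admits no holonomy-invariant transverse measure of full support and is not defined by any closed $1$-form. Your proposed invariant transverse measure --- pairing a transverse vector with ``a chosen generator of $\mathcal{N}$'' via $g$ --- is neither canonical (the radical is only a line field, with no preferred normalization) nor holonomy-invariant; and ruling out Reeb components does not yield a closed defining form (tautness is strictly weaker than being defined by a closed form). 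Consequently Fact \ref{Fact: structure of the universal cover} cannot be invoked. The standard route to the conclusion, and the one Zeghib follows, is instead to show that the lifted foliation of $\widetilde{M}$ is a foliation by properly embedded planes and then apply Palmeira's theorem, which requires no closed defining form.

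Two secondary problems. First, you assert that the null geodesics tangent to $\mathcal{N}$ are complete ``by the leafwise techniques of Section \ref{section: Foliated lightlike geometry: leafwise completeness}''; but Theorem \ref{Theorem: completeness of lightlike tot geod foliation} is only an equivalence, and Example \ref{Example: CP higher dimension} ($T_{CP}\times\mathbb{S}^1$) is precisely a compact Lorentzian $3$-manifold with a lightlike totally geodesic foliation whose $\mathcal{N}$-geodesics are incomplete. (For the topological argument one only needs completeness of the \emph{flow} of a vector field spanning $\mathcal{N}$, which holds by compactness of $M$, but the step as written is wrong.) Second, the Hausdorffness of the leaf space $\widetilde{F}/\widetilde{\mathcal{N}}$ is asserted without justification; a $1$-dimensional foliation of a simply connected surface can have a non-Hausdorff leaf space, and the transverse Riemannian structure only settles this after an argument of the bundle-like/Riemannian-submersion type, which for a non-flat leaf is not the one carried out in Section \ref{section: (G,X)-foliated completeness problem: affine unimodular lightlike geometry}.
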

\begin{proof}[Proof of Proposition \ref{Prop: structure of G}]
      $(1)$ By assumption, the closure of the flow of $V$ in $\Isom(M)$ is $G=\T^k$, with $k \geq 3$. The conclusion follows from Lemma \ref{Lemma: 3D flat}.
      
      $(2)$ Let $\G:=\Isom^\mathsf{o}(M)$. First, recall that we have a bound  on the dimension of $\G$, namely, $\dim \G \leq 6$. 
      The proof is based on the classification result on the connected component of the isometry group of a compact Lorentzian manifold \cite{adams1997isometry1,adams1997isometry2,zeghib1998identity}. By \cite[Theorem 1.5]{zeghib1998identity}, the Lie algebra of $\G$ decomposes as $\widehat{g}=\R^k \oplus \mathcal{K} \oplus \mathcal{H}$, where $\mathcal{K}$ is the Lie algebra of a compact semisimple Lie group and $\mathcal{H}$ is isomorphic to either  $\sl_2(\R), \heis$, or $\osc$ the Lie algebra of an  oscillator group. In our case, $\R^2 \subset \widehat{g}$. We need to consider the following two cases:
      \begin{itemize}
          \item[I.] If $\dim(\widehat{g})=3$, then $\widehat{g}$ is isomorphic to either $\R^3$ or $\heis_3$. Indeed, the $\sl_2(\R)$ and $\so(3)$ cases are excluded, since their maximal abelian subgroup has dimension $1$. Thus, $\widehat{G}$ is solvable, and by Lemma \ref{Lemma: 3D flat}, $M=\widehat{G}/ \Gamma$, where $\Gamma$ is a cocompact lattice in $\widehat{G}$. Since $\G$ contains $\T^2$, the  normalizer of $\Gamma$ in $\G$ has dimension $\geq 2$. Therefore, $\widehat{g} = \R^3$. In this case, $M$ is flat by Lemma \ref{Lemma: 3D flat}.
          \item[II.] If $\dim(\widehat{g})\geq 4$ then $M$ is locally homogeneous by \cite[Theorem 4.1]{frances}. Let $I$ be the isotropy group of some point. The case where $\dim I=2$ is not possible by \cite[Proposition 2.3]{allout2022homogeneous}. 
          We are left with two situations. If $\dim I \geq 3$, then $M$ has constant curvature. So $M$ is a compact constant curvature Lorentzian $3$-manifold with an isometry group of dimension $\geq 4$. This is not possible by Lemma \ref{Lemma: constant curvature}. If $\dim I=1$, the orbits of $\widehat{G}$ in $M$ are all open, and therefore also closed. Since $M$ is connected, it consists of a single orbit, meaning $M$ is $\widehat{G}$-homogeneous. Consequently, $M$ can be identified with $\G/I$, where $I$ is the isotropy of some point. In particular, $\dim \widehat{g}= 4$.
          In this case, the possibilities for $\widehat{g}$ are $\R^4$, $\R \oplus \sl_2(\R)$, $\R \oplus \heis_3$, $\R \oplus \so(3)$, and $\osc$. By Lemma \ref{Lemma: 3D flat}, we can rule out all possibilities except for $\R \oplus \so(3)$.
In the latter case, $\G$ is isomorphic to $\R\times K$ or $\mathbb{S}^1\times K$, where $K$ is finitely covered by $\mathbb{S}^3$. In both cases, $M$ has a non-contractible universal cover, diffeomorphic to either $\mathbb{S}^3$ or $\R\times \mathbb{S}^2$. This is not possible by Fact \ref{Fact: geod lightlike foliation implies contractible}.
      \end{itemize}

Finally, either $\dim \G=3$, and $M$ is flat. Or $\dim \G=2$, in which case $\G=G$. This ends the proof. 
\end{proof}
We refer to Paragraph \ref{subsubsection: dim G=2} for examples where the identity component of the isometry group is a $2$-torus.

\subsection{Existence of a $V$-invariant frame field}\label{subsection: Existence of a global frame field}
\begin{lemma}\label{Lemma: 3D construction of null frame}
Let $(M,g,V)$ be an orientable compact Lorentzian manifold of dimension $3$, with $V$ an equicontinuous null Killing field. Then, there exists a null frame field $(V,X,Y)$ on $M$ which is invariant by the flow of $V$, and such that the metric in the basis $(V,Y,X)$ is given by 
$
\begin{pmatrix}
0 & 1 & 0  \\
1 & 0 & 0  \\
0 & 0 & 1 \\
\end{pmatrix}.$
Conversely, given a $V$-invariant frame field $(V,X,Y)$ in which the metric has the above form, $V$ is a null Killing field for this metric. 
\end{lemma}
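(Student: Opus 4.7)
The plan is to construct $X$ and then $Y$ successively, using the compactness of the closure $G$ of the flow of $V$ in $\Isom(M,g)$ to obtain $V$-invariant objects by construction. Since $V$ is equicontinuous, $G$ is a compact abelian Lie group; averaging any Riemannian metric over $G$ yields a $G$-invariant, in particular $V$-invariant, auxiliary Riemannian metric $h$ on $M$.

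The next step is topological. The $1$-form $\alpha := g(V,\cdot)$ is nowhere vanishing, so it trivializes the quotient line bundle $TM/V^\perp$. Combined with the orientability of $M$, this forces $\det V^\perp$ to be trivial, and since $\R V \subset V^\perp$ is trivial (spanned by $V$), so is the quotient line bundle $V^\perp/\R V$. I would then consider the line sub-bundle
\[
\Pi := V^\perp \cap V^{\perp_h} \;\subset\; V^\perp,
\]
which is $V$-invariant (as both $V^\perp$ and $h$ are) and projects isomorphically onto $V^\perp/\R V$, hence is trivial. Its $h$-unit section $X_0$, defined up to sign and made global by triviality of $\Pi$, is $V$-invariant: $\phi^t_V(X_0)$ is an $h$-unit section of the rank-$1$, $V$-invariant bundle $\Pi$, hence equals $\pm X_0$, and the sign is $+1$ by continuity at $t=0$. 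Since $\R V$ is the $g$-radical of $V^\perp$ and $X_0 \notin \R V$, the vector $X_0$ is $g$-spacelike, so $X := X_0/\sqrt{g(X_0, X_0)}$ is a well-defined, $V$-invariant, $g$-unit vector field tangent to $V^\perp$.

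For $Y$, the same argument applied to the line bundle $E := (V^\perp)^{\perp_h} \simeq TM/V^\perp$ produces a $V$-invariant $h$-unit global section $W'$ of $E$. The function $g(V, W')$ is $V$-invariant and nowhere zero by construction of $E$, so $W_0 := W'/g(V, W')$ is a $V$-invariant vector field satisfying $g(V, W_0) = 1$. Setting
\[
Y := W_0 + aV + bX, \qquad b := -g(X, W_0), \qquad a := \tfrac{1}{2}\bigl(g(X, W_0)^2 - g(W_0, W_0)\bigr),
\]
a direct calculation using $g(V,V)=0$, $g(V,X)=0$, $g(X,X)=1$, and $g(V, W_0)=1$ shows $g(V,Y)=1$, $g(X,Y)=0$, $g(Y,Y)=0$. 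Each scalar used is $V$-invariant, hence $Y$ is too; in the ordered basis $(V, Y, X)$ the Gram matrix is exactly as stated.

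For the converse, suppose $(V, X, Y)$ is a $V$-invariant frame whose Gram matrix takes the prescribed form. Then $[V, X] = [V, Y] = 0$ and every inner product $g(A,B)$ with $A, B \in \{V, X, Y\}$ is constant, so
\[
(\mathcal{L}_V g)(A, B) = V \cdot g(A, B) - g([V, A], B) - g(A, [V, B]) = 0
\]
on every basis pair; hence $\mathcal{L}_V g = 0$ and $V$ is Killing, while $g(V,V) = 0$ shows $V$ is null. The main obstacle I anticipate is justifying the triviality of the two line bundles $V^\perp/\R V$ and $TM/V^\perp$, which is where the orientability of $M$ and the nowhere-vanishing $1$-form $\alpha$ combine in an essential way.
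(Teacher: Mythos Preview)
Your proof is correct and follows essentially the same approach as the paper: both use a $V$-invariant auxiliary Riemannian metric $h$ (obtained from equicontinuity), take the line bundle $V^\perp \cap V^{\perp_h}$, extract a $g$-unit spacelike section $X$, and then determine $Y$ by the algebraic conditions $g(V,Y)=1$, $g(X,Y)=0$, $g(Y,Y)=0$. The only difference is that where the paper simply passes ``up to double cover'' to orient the line bundle, you argue its triviality directly from the orientability of $M$ together with the trivializations $TM/V^\perp \cong \R$ (via $g(V,\cdot)$) and $\R V \cong \R$; this is a legitimate refinement, and your explicit formula for $Y$ in terms of an auxiliary $W_0$ is just an unpacking of the paper's one-line characterization of $Y$.
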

\begin{proof}
The flow of $V$ preserves a Riemannian metric $h_0$ on $M$. Let $P_0$ (resp. $P$) be the $2$-dimensional distribution $h_0$-orthogonal (resp. $g$-orthogonal) to $V$. Then $P_0 \cap P$ is a line bundle over $M$. Up to double cover, we can define a vector field $X$ on $M$ tangent to the line bundle, and satisfying $g(X,X)=1$.  Let $Y$ be the null vector field on $M$ which is $g$-orthogonal to $X$ and satisfies $g(V,Y)=1$. Since the flow of $V$ is isometric for both $g$ and $h_0$, these vector fields are invariant by the flow of $V$, i.e. $[V,X]=[V,Y]=0$. For the converse, one proves that $\nabla V$ is skew-symmetric for $g$, using that the scalar products $g(U,W)$ are $V$-invariant for $U, W \in \{V,X,Y\}$.    
\end{proof}

In particular, the foliation tangent to $V$ is trasnversally parallelizable. The action of $V$ is assumed to be equicontinuous. So the closure of the leaves of $V$ are the fibers of a fibration defined by the (proper) action of $G$, the closure of the flow of $V$ in the isometry group. When the action of $V$ is free (note that the action also preserves the orientation of $M$), the base space of this fibration is a compact, orientable manifold. Its Euler characteristic is zero, as the $V$-action preserves non-vanishing vector fields transverse to $V$, which project to well-defined vector fields on the base space.  Therefore, the base is  diffeomorphic to a torus (of dimension $\leq 2$), which we denote by $\Sigma$. 

\begin{lemma}\label{Lemma 7.3}
 Let $(V,X,Y)$ be a null frame field as above. Then there exist $V$-invariant functions $f, h, \mu \in C^{\infty}(M,\R)$ such that
 \begin{itemize}
     \item  $\nabla_V V=0$, $\nabla_X V= -\frac{1}{2}f V$, $\nabla_Y V = \frac{1}{2}f X$, 
     \item $[X,Y]=fY + h X + \mu V$.
 \end{itemize}
In particular, $V$ is parallel if and only if $f=0$. Moreover, since $f$, $h$ and $\mu$ are constant along the orbits of $V$, they project to $\Sigma$. 
\end{lemma}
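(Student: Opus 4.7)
All three assertions can be extracted from two inputs: the Killing equation for $V$, which says that the endomorphism $W \mapsto \nabla_W V$ is $g$-skew-symmetric, together with the structural identities $[V,X]=[V,Y]=0$ and $g(V,V)=g(Y,Y)=g(V,X)=g(X,Y)=0$, $g(V,Y)=g(X,X)=1$ already provided by Lemma \ref{Lemma: 3D construction of null frame}.

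First, I will compute $\nabla_W V$ for $W \in \{V,X,Y\}$ by determining each of the nine scalar products $g(\nabla_W V, Z)$ with $Z$ in the frame. Skew-symmetry of $\nabla V$ combined with the constancy of $g(V,V), g(X,X), g(V,X), g(V,Y)$ pins these down and forces $\nabla_V V = 0$, $\nabla_X V = -\tfrac{1}{2} f V$ and $\nabla_Y V = \tfrac{1}{2} f X$ for some smooth function $f$; the same $f$ appears in the last two formulas because of the identity $g(\nabla_Y V, X) = -g(Y, \nabla_X V) = \tfrac{1}{2} f$. For $\nabla_V V$ one checks that all three scalar products with $V, X, Y$ vanish, whence $\nabla_V V = 0$ by non-degeneracy of $g$.

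Next, I will expand the bracket as $[X,Y] = aV + bY + cX$ and read the coefficients off by pairing with $V$, $X$, $Y$: one gets $b = g([X,Y], V)$, $c = g([X,Y], X)$, $a = g([X,Y], Y)$. The key identification is $b = f$: writing $[X,Y] = \nabla_X Y - \nabla_Y X$ and using metric compatibility, $X\cdot g(Y,V) = 0$ and $Y\cdot g(X,V) = 0$ yield $g(\nabla_X Y, V) = -g(Y, \nabla_X V) = \tfrac{1}{2} f$ and $g(\nabla_Y X, V) = -g(X, \nabla_Y V) = -\tfrac{1}{2} f$, whence $b = f$. The remaining coefficients are then defined as $c = h$ and $a = \mu$.

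Finally, the $V$-invariance of $f, h, \mu$ is a one-line application of the Jacobi identity: since $[V,X]=[V,Y]=0$, Jacobi gives $[V,[X,Y]]=0$, and expanding with $[X,Y] = fY + hX + \mu V$ (using again $[V,X]=[V,Y]=0$) yields
\[
V(f)\, Y + V(h)\, X + V(\mu)\, V = 0,
\]
so each of these derivatives vanishes; hence $f, h, \mu$ descend to functions on the base $\Sigma$. The equivalence ``$V$ is parallel $\iff f = 0$'' is then immediate from the three connection formulas. I do not expect any genuine obstacle: the lemma is essentially a bookkeeping exercise once the Killing equation is combined with the null identities, and the only subtlety is to avoid implicitly dividing by $g(V,V)$ or $g(Y,Y)$ when extracting components in the null frame.
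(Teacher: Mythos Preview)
Your proposal is correct and follows essentially the same approach as the paper's proof, only spelled out in more detail: the paper simply says the connection formulas are ``a straightforward computation, using that $V$ is Killing and all the scalar products involving $V, X,$ and $Y$ are constant,'' and that $V$-invariance of $f,h,\mu$ holds because ``the frame is $V$-invariant, hence $[X,Y]$ is also $V$-invariant'' --- which is exactly your Jacobi argument. Your explicit identification of the $Y$-coefficient of $[X,Y]$ with the same $f$ appearing in $\nabla_X V$ and $\nabla_Y V$ is the one computation the paper leaves entirely to the reader, and you carry it out correctly.
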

\begin{proof}
 The first point is a straightforward computation, using that $V$ is Killing and all the scalar products involving $V, X,$ and $Y$ are constant. The fact that $f$, $h$ and $\mu$ are $V$-invariant follows from the fact that the frame is $V$-invariant, hence $[X,Y]$ is also $V$-invariant.  
\end{proof}
\subsection{Geodesic completeness} We begin now the investigation of the global completeness.   
\begin{observation}
    In dimension $3$,  the geodesics orthogonal to $V$ are complete. Indeed, the distribution orthogonal to $V$ is integrable, hence complete by Corollary $\ref{Cor: Killing complete}$.
\end{observation}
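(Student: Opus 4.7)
The plan is to reduce the statement to a direct application of Corollary \ref{Cor: Killing complete} by checking that in dimension $3$ the distribution $V^\perp$ is automatically integrable. Once this is established, Corollary \ref{Cor: Killing complete} yields completeness of the leaves of the resulting foliation $\mathfrak{F}$ with respect to the induced connection. A geodesic $\gamma$ of $M$ with $g(\dot\gamma, V) \equiv 0$ stays tangent to $V^\perp$ by Clairaut's first integral (recalled at the start of Section \ref{section: On completeness of compact Lorentzian manifolds with a null Killing field}). Since $\mathfrak{F}$ is totally geodesic, its leafwise geodesics coincide with the ambient geodesics tangent to $V^\perp$, so the leafwise completeness translates into the global completeness of geodesics orthogonal to $V$.

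For the integrability step, I would set $\alpha := g(V, \cdot)$ so that $V^\perp = \ker \alpha$, which is a rank-$2$ distribution containing $V$ itself (since $V$ is null). Because $V$ is Killing, $\mathcal{L}_V g = 0$ and trivially $\mathcal{L}_V V = 0$, hence $\mathcal{L}_V \alpha = 0$. Cartan's formula together with $\iota_V \alpha = g(V,V) = 0$ then gives
\[
0 \;=\; \mathcal{L}_V \alpha \;=\; d(\iota_V \alpha) + \iota_V d\alpha \;=\; \iota_V d\alpha,
\]
so $d\alpha(V, \cdot) \equiv 0$ identically. In dimension $3$, any pair of vectors tangent to $V^\perp$ can be written in terms of $V$ and a single transverse direction $E \in V^\perp$; evaluating $d\alpha$ on such a pair reduces to terms of the form $d\alpha(V, E)$, all of which vanish. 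By Frobenius, $V^\perp$ is integrable.

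I do not anticipate a genuine obstacle: the only non-routine point is the dimension-$3$ integrability of $V^\perp$, and it is very short. The argument also clarifies why the observation is specific to dimension $3$: in higher dimensions $V^\perp$ need not be involutive, so the reduction to the leafwise completeness of Corollary \ref{Cor: Killing complete} is no longer available, and the completeness of geodesics orthogonal to $V$ becomes a substantially deeper question, consistent with the discussion in Section \ref{section: On completeness of compact Lorentzian manifolds with a null Killing field}.
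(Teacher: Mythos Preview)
Your proposal is correct and follows the same route as the paper: the observation in the paper is itself essentially a one-line proof (integrability of $V^\perp$ in dimension $3$, then invoke Corollary~\ref{Cor: Killing complete}), and you simply supply the standard details for the integrability step via $\iota_V d\alpha=0$ and Frobenius, together with the remark that total geodesicity identifies leafwise and ambient geodesics. There is nothing to add.
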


\subsubsection{\textbf{Case of $\dim G\geq3$}}\label{subsubsection: dim G=3} 
By Proposition \ref{Prop: structure of G}, this case corresponds to a null Killing field $V$ having a dense orbit in $M$ (in which case, all orbits of $V$ are dense).
As shown in Proposition \ref{Prop: structure of G}, $M$ is, in this case, a flat $3$-torus, which is therefore complete, with $\dim G = 3$.

\subsubsection{\textbf{Case of $\dim G=1$}}\label{subsubsection: dim G=1}

In this paragraph, we assume that $\dim G=1$, i.e. the flow of $V$ is periodic. Then $M$ is a Seifert fibration. For simplicity, we will assume that the $\mathbb{S}^1$-action of $V$ is free. Then $V$ is tangent to the fibers of a fiber bundle $\pi: M \to \Sigma$, where $\Sigma$ is diffeomorphic to a $2$-torus. 
\subsubsection*{\textbf{Getting started}}
The following lemma is a converse to Lemma \ref{Lemma 7.3} above. 
\begin{lemma}\label{Lemma: Lemma 7.5}
Let $\pi: M \to \Sigma$ be an $\mathbb{S}^1$-bundle over a $2$-torus $\Sigma$, given by an $\mathbb{S}^1$ free action. Denote by $V$ the vector field on $M$ generated by the $\mathbb{S}^1$-action (it is tangent to the fibers). Let $(X^*, Y^*)$ be two vector fields on $\Sigma$ such that $[X^*, Y^*]=f^* Y^* + h^* X^*$, with $f^*, h^* \in C^{\infty}(\Sigma, \R)$. There exist vector fields $X,Y$ on $M$ together with a Lorentzian metric $g$ on $M$, such that 
\begin{itemize}
    \item $(V,X,Y)$ is a $V$-invariant null frame field on $M$, and $V$ is a null Killing field for $g$,
    \item $[X, Y]=f Y + h X + \mu V$, where $f, h, \mu \in C^{\infty}(M, \R)$ are $V$-invariant functions on $M$, such that $f$ and $h$ project to $f^*$ and $ h^*$ respectively.
\end{itemize}
This metric is not unique.  
\end{lemma}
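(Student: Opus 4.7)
\bigskip

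\noindent\textbf{Proof sketch.} The plan is to build $X$ and $Y$ as horizontal lifts of $X^*$ and $Y^*$ with respect to a principal $\mathbb{S}^1$-connection on $\pi\colon M\to\Sigma$, and then declare the desired Gram matrix in the frame $(V,X,Y)$. First, choose any principal connection on the $\mathbb{S}^1$-bundle $\pi\colon M\to\Sigma$; this amounts to a $V$-invariant horizontal distribution $\mathcal{H}\subset TM$ transverse to $\R V$. For every vector field $Z^*$ on $\Sigma$ there is a unique horizontal lift $\widetilde{Z}\in\Gamma(\mathcal{H})$ satisfying $d\pi\circ\widetilde{Z}=Z^*\circ\pi$. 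Since the connection is $\mathbb{S}^1$-equivariant and $Z^*$ lives on the base, $\widetilde{Z}$ is invariant under the $\mathbb{S}^1$-action, i.e. $[V,\widetilde{Z}]=0$. Let $\widetilde{X},\widetilde{Y}$ be the horizontal lifts of $X^*,Y^*$.

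Now set $X:=\widetilde{X}$, $Y:=\widetilde{Y}$, $f:=f^*\circ\pi$ and $h:=h^*\circ\pi$. Because horizontal lifting is $C^\infty(\Sigma)$-linear up to pullback, and since $d\pi([\widetilde{X},\widetilde{Y}])=[X^*,Y^*]=f^*Y^*+h^*X^*$, the vector field
\[
[X,Y]-fY-hX
\]
is vertical, hence of the form $\mu V$ for a unique function $\mu\in C^\infty(M,\R)$. The fact that $[V,X]=[V,Y]=0$ implies $[V,[X,Y]]=0$, so $\mu$ (like $f$ and $h$) is $V$-invariant. By construction $f$ and $h$ project to $f^*$ and $h^*$, and $(V,X,Y)$ is a $V$-invariant global frame on $M$ (pointwise linear independence follows from the transversality of $\mathcal{H}$ and $\R V$ together with the fact that $(X^*,Y^*)$ is a frame on the parallelizable torus $\Sigma$).

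Define the Lorentzian metric $g$ on $M$ by declaring $(V,Y,X)$ to be a frame with Gram matrix
\[
\begin{pmatrix}0&1&0\\1&0&0\\0&0&1\end{pmatrix}.
\]
This matrix has signature $(2,1)$, so $g$ is a smooth Lorentzian metric, and by construction $V$ is null, $(V,X,Y)$ is a null frame with $g(V,X)=0$, $g(V,Y)=1$, $g(X,X)=1$ and $g(X,Y)=g(Y,Y)=0$. Since all these scalar products are constant and the frame is $V$-invariant, the converse direction of Lemma~\ref{Lemma: 3D construction of null frame} applies and shows that $V$ is Killing for $g$. Finally, non-uniqueness is immediate: replacing the principal connection by another one, or replacing $(X,Y)$ by $(X+aV,\,Y+bV)$ for any $V$-invariant functions $a,b$ (equivalently, functions pulled back from $\Sigma$), still yields a frame of the required form and hence another metric $g'$ satisfying the conclusions of the lemma, with a modified $\mu$.

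The plan has no genuine obstacle: the only point to handle with care is checking that horizontal lifts of vector fields from the base are automatically $V$-invariant (hence generate a frame on which the flow of $V$ acts trivially), and that the vertical defect in $[\widetilde{X},\widetilde{Y}]-fY-hX$ is well defined on all of $M$. Both are direct consequences of the principal bundle structure.
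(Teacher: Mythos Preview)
Your proof is correct and follows essentially the same approach as the paper: the paper obtains the $V$-invariant horizontal distribution as the orthogonal complement of $V$ with respect to an $\mathbb{S}^1$-invariant Riemannian metric $h_0$ on $M$, which is exactly a choice of principal connection, and then defines $g$ by imposing the Gram matrix of Lemma~\ref{Lemma: 3D construction of null frame} on the resulting frame. Your write-up is in fact more explicit than the paper's in justifying the $V$-invariance of the lifts, the verticality of $[X,Y]-fY-hX$, and the $V$-invariance of $\mu$ via the Jacobi identity.
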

\begin{proof}
Choose a Riemannian metric $h_0$ on $M$ preserved by the flow of $V$. It defines a horizontal distribution on $M$ by taking the plane distribution orthogonal to $V$. Then $X$ and $Y$ are obtained by taking the horizontal lifts of $X^*$ and $Y^*$ respectively. By definition, $X$ and $Y$ project to $\Sigma$, so the coefficients $f, h, \mu$ are $V$-invariant. Define $g$ so that $(V,X,Y)$ is a null frame as in Lemma \ref{Lemma: 3D construction of null frame}. The fact that $V$ is a Killing field follows from Lemma \ref{Lemma: 3D construction of null frame}.      
\end{proof}

\begin{remark}\label{Remark: trivial bundle}
The distribution $\Span(X,Y)$ is integrable if and only if $\mu=0$.
 In the previous lemma, one may  consider a trivial fiber bundle $\Sigma \times \mathbb{S}^1$. In this case, since we have a global section, one can lift $X^*$ and $Y^*$ so that the distribution generated by $X$ and $Y$ is tangent to the sections, hence integrable.  
\end{remark}
\subsubsection*{\textbf{Geodesic equation}} Let $\gamma$ be a geodesic of $M$ and write 
$$\dot{\gamma}(t)=a(t) V + b(t) X + c(t) Y$$ 
in the global framing. Let $\beta=g(\dot{\gamma}, \dot{\gamma})$ and $\alpha= g(\dot{\gamma},V)$. Both $\beta$ and $\alpha$ are constant, and we have $c(t)=\alpha$ for any $t$. 
\begin{observation}
When $\alpha=0$, $\gamma$ is contained in a leaf of the $\mathfrak{F}$-foliation, and is then complete.    
\end{observation}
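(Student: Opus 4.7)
The plan is essentially to reduce the statement to the leafwise completeness result already established as Corollary \ref{Cor: Killing complete} (equivalently Theorem \ref{Theorem: Killing complete}). First I would note that $\alpha = g(\dot\gamma,V)$ is a first integral of the geodesic flow, since $V$ is Killing; this is Clairaut's constant mentioned at the start of Section \ref{section: On completeness of compact Lorentzian manifolds with a null Killing field}. Therefore the hypothesis $\alpha=0$ holds on the whole interval of definition of $\gamma$, which simply means that $\dot\gamma(t) \in V^\perp$ for all $t$.

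Next I would invoke the geometric setup of this subsection: in dimension $3$, because $V$ is null we have $V \in V^\perp$, so the orthogonal distribution has constant rank $2$ and contains the characteristic direction, which forces integrability; it is tangent to the codimension $1$ foliation $\mathfrak{F}$. Moreover, since $V$ is Killing and $V^\perp$ is integrable, $\mathfrak{F}$ is totally geodesic, which is precisely the hypothesis under which Corollary \ref{Cor: Killing complete} applies. Hence a geodesic with initial velocity in $V^\perp$ stays tangent to $V^\perp$ and therefore remains inside a single leaf $F$ of $\mathfrak{F}$.

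It then only remains to identify completeness of $\gamma$ as a geodesic of $(M,g)$ with completeness of $\gamma$ as a geodesic of $F$ for the induced connection, which holds because $F$ is totally geodesic. Corollary \ref{Cor: Killing complete} (or equivalently Theorem \ref{Theorem: Killing complete}) guarantees that the leaves of $\mathfrak{F}$ are complete, and this closes the argument. There is no genuine obstacle here: the whole content of the observation is the recognition that the case $\alpha=0$ is precisely the leafwise completeness situation already treated, so no new computation, and in particular no use of the geodesic equation in the frame $(V,X,Y)$, is needed.
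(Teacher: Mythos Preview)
Your proposal is correct and follows exactly the paper's approach: the paper does not give a separate proof of this observation, since it is just a restatement in the $\alpha$-notation of the earlier Observation in Subsection~\ref{subsection: Existence of a global frame field} (beginning of the geodesic completeness discussion), which already says that in dimension $3$ the distribution $V^\perp$ is integrable and the geodesics tangent to it are complete by Corollary~\ref{Cor: Killing complete}. Your unpacking of why $\alpha=0$ forces $\dot\gamma\in V^\perp$ along the whole geodesic, and why total geodesicity of $\mathfrak{F}$ then reduces everything to leafwise completeness, is precisely the intended content.
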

In the sequel, we consider geodesics transverse to the $\mathfrak{F}$-foliation, i.e. for which $\alpha \neq 0$. 
\begin{lemma}[\textbf{Computation of Christoffel symbols}]
The only non-null Christoffel symbols are 
\begin{itemize}
    \item $\Gamma_{VY}^X = \Gamma_{YV}^X =-\Gamma_{VX}^Y=-\Gamma_{XV}^Y= \Gamma_{XY}^V=\frac{1}{2}f$ 
    \item $\Gamma_{XY}^X =-\Gamma_{XX}^Y =h$ 
    \item $\Gamma_{YY}^X =-\Gamma_{YX}^Y =\mu$ 
\end{itemize}    
\end{lemma}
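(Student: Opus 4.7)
The plan is to read off the symbols directly from the Koszul formula, after exploiting the drastic simplifications afforded by the frame. By the construction in Lemma~\ref{Lemma: 3D construction of null frame}, the six pairwise scalar products $g(V,V), g(V,X), g(V,Y), g(X,X), g(X,Y), g(Y,Y)$ are all constants, so every directional-derivative-of-the-metric term in Koszul vanishes. Moreover, the $V$-invariance of the frame gives $[V,X]=[V,Y]=0$, and Lemma~\ref{Lemma 7.3} tells us that the only nonzero bracket is $[X,Y]=fY+hX+\mu V$. Hence for any $A,B,C\in\{V,X,Y\}$, Koszul collapses to
\begin{equation*}
2\,g(\nabla_A B,C)=g([A,B],C)-g([A,C],B)-g([B,C],A),
\end{equation*}
and every right-hand side reduces to reading off a component of $[X,Y]$ against the null metric in which $g(V,Y)=g(X,X)=1$ are the only nonzero pairings.

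First I would dispose of the symbols involving $\nabla_\bullet V$, which Lemma~\ref{Lemma 7.3} already provides: $\nabla_V V=0$, $\nabla_X V=-\tfrac12 fV$, $\nabla_Y V=\tfrac12 fX$. Pairing these against $V, X, Y$ using the metric matrix produces the entire first block of the claim, namely the symbols equal to $\pm\tfrac12 f$; the symmetry $\Gamma_{VY}^X=\Gamma_{YV}^X$ is automatic from $[V,Y]=0$ (torsion-freeness), while the sign-flip to $\Gamma_{VX}^Y=-\Gamma_{VY}^X$ is a direct consequence of the Killing skew-symmetry $g(\nabla_A V, B)+g(\nabla_B V, A)=0$. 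The identity $\Gamma_{XY}^V=\tfrac12 f$ is then immediate from the collapsed Koszul:
\begin{equation*}
2\,g(\nabla_X Y,V)=g([X,Y],V)=g(fY+hX+\mu V,\,V)=f.
\end{equation*}

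For the remaining symbols $\Gamma_{XY}^X,\ \Gamma_{XX}^Y,\ \Gamma_{YY}^X,\ \Gamma_{YX}^Y$, I would feed the appropriate triples into the collapsed Koszul formula; each calculation amounts to taking a single coefficient of $[X,Y]$ against $g$. For instance, $2\,g(\nabla_X Y,X)=g([X,Y],X)-g([Y,X],X)=2h$ yields $\Gamma_{XY}^X=h$, and the symmetric computation $2\,g(\nabla_Y Y,X)=-2\,g([Y,X],Y)=2\mu$ gives $\Gamma_{YY}^X=\mu$; the companion symbols $\Gamma_{XX}^Y=-h$ and $\Gamma_{YX}^Y=-\mu$ then fall out of the analogous triples. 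The vanishing of all the other possible symbols is immediate, since the only nontrivial bracket is $[X,Y]$ and Koszul will always return $0$ whenever no two of $A,B,C$ are $\{X,Y\}$. I do not expect any real obstacle: the lemma is essentially a bookkeeping exercise, and the only conceptual point is the observation that the null form of $g$ (in which $V$ pairs with $Y$, not with itself) controls which coefficient of $[X,Y]$ ends up in which $\Gamma$.
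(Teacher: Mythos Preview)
Your proposal is correct and is exactly the computation the paper has in mind: its proof reads in full ``Straightforward computation, using Lemma~\ref{Lemma 7.3},'' and you have simply unpacked that line by feeding the constant scalar products and the bracket $[X,Y]=fY+hX+\mu V$ into Koszul. There is no alternative route here to compare.
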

\begin{proof}
Straightforward computation, using Lemma \ref{Lemma 7.3}.    
\end{proof}
So the geodesic equation when $\alpha \ne 0$ reads
\begin{align*}
    \dot{b}&= \frac{1}{2}f(\gamma(t)) b^2 - \alpha h(\gamma(t)) b  - \Big{(}\frac{\beta}{2} f(\gamma(t))+\alpha^2 \mu(\gamma(t))\Big{)} \\
    \dot{a}&=-\frac{f(\gamma(t))}{2 \alpha} b^3 + h(\gamma(t))b^2+ \Big{(}\mu \alpha + \frac{\beta}{2 \alpha} f(\gamma(t))\Big{)}b\\
    \dot{c}&=0
\end{align*}
\paragraph{\textbf{Reduction: Lorentzian metric associated to a triple $(\Sigma, X^*,Y^*)$}} 
By Lemma \ref{Lemma: Lemma 7.5} and Remark \ref{Remark: trivial bundle}, one can associate to a triple $(\Sigma, X^*,Y^*)$ such that $[X^*,Y^*]=f^* Y^* + h^* X^*$, a Lorentzian metric on the trivial bundle $\Sigma \times \mathbb{S}^1$ for which the lifts $X, Y$ of $X^*$ and $Y^*$ to $M$ define an integrable distribution. And one has $[X,Y]=fY + h X$, where $f$ and $h$ are fiber-constant functions obtained from $f^*$ and $h^*$ respectively. 
\begin{observation}
 The orbits of $Y$ are totally geodesic. Indeed, we have $g(\nabla_Y Y, Y)=0$ and $g(\nabla_Y Y, V)=-g(Y, \nabla_Y V)=0$.  And under the above reduction, i.e. integrability of the distribution\; $\Span(X,Y)$, we have more: the flow of  $Y$ is geodesic. This follows from $g(\nabla_Y Y, X)=\Gamma_{YY}^X=\mu=0$, which yields $\nabla_YY=0$. In this case, the orbits of $Y$ are complete null geodesics.    
\end{observation}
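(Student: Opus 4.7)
The plan is to compute $\nabla_Y Y$ in the null frame $(V, X, Y)$ and read off both assertions from its three components, invoking the Christoffel symbols lemma together with Lemma \ref{Lemma 7.3}.

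First I would observe that $Y$ is null, so $g(\nabla_Y Y, Y) = \tfrac{1}{2} Y(g(Y, Y)) = 0$ automatically. For the $V$-pairing, differentiating the constant relation $g(Y, V) \equiv 1$ along $Y$ gives $g(\nabla_Y Y, V) = -g(Y, \nabla_Y V)$, and Lemma \ref{Lemma 7.3} supplies $\nabla_Y V = \tfrac{1}{2} f X$, which is $g$-orthogonal to $Y$; hence $g(\nabla_Y Y, V) = 0$ as well. In the null frame, where $g(V, Y) = 1$ is the only nontrivial cross-pairing and $X$ spans the one spacelike direction, these two vanishing inner products force $\nabla_Y Y$ to be a pure multiple of $X$, which is the claimed total-geodesicity of the $Y$-orbits.

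For the stronger claim under the reduction, I would invoke integrability of the distribution $\Span(X, Y)$, which is exactly the vanishing $\mu \equiv 0$ of the $V$-component of $[X, Y]$ built into Lemma \ref{Lemma: Lemma 7.5}. The Christoffel symbols lemma then gives $\Gamma_{YY}^X = \mu = 0$, and combined with the previous step this yields $\nabla_Y Y = 0$. Each $Y$-orbit is therefore an affinely parameterized null geodesic, and completeness follows from compactness of $M$: since $Y$ is smooth and nowhere vanishing, its flow is globally defined, so the orbits are complete null geodesics.

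There is no substantial analytic difficulty here; the only bookkeeping point is that in the null frame the metric interchanges the roles of $V$ and $Y$, so reading off components of $\nabla_Y Y$ from $g$-pairings requires a moment of attention, but this is already carried out in the Christoffel symbols lemma.
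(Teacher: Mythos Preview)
Your proof is correct and follows essentially the same approach as the paper: the observation's justification is already embedded in its statement, and you reproduce it step by step---the vanishing of $g(\nabla_Y Y, Y)$ from nullity of $Y$, the vanishing of $g(\nabla_Y Y, V)$ via Lemma \ref{Lemma 7.3}, and then $\Gamma_{YY}^X = \mu = 0$ under the reduction to conclude $\nabla_Y Y = 0$. You add a line of justification for completeness (compactness of $M$ forces the flow of $Y$ to be globally defined) that the paper leaves implicit, but otherwise the arguments coincide.
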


One asks: 
\begin{question}\label{Question: Dim 3}
    What restrictions on $f^*$ and $h^*$ in order to obtain an incomplete example?
\end{question}

With the above reduction, the equation on $b(t)$ for a null geodesic reads
\begin{align}
\dot{b}=\frac{1}{2} f(\gamma(t)) b^2 -\alpha h(\gamma(t))b.    
\end{align}
\begin{observation}\label{Observation: b(t_0)=0 implies b=0}
 If $b(t_0)=0$ for some $t_0$, then $b$ is identically zero. In this case, $\gamma$ is a complete null geodesic everywhere tangent to $\Span(Y,V)$.   
\end{observation}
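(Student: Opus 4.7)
The plan is to combine a uniqueness argument for the scalar ODE on $b$ with the null condition and the compactness of $M$.

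Write the reduced equation in the form
$$\dot{b} \;=\; b \cdot \Bigl(\tfrac{1}{2} f(\gamma(t))\, b - \alpha\, h(\gamma(t))\Bigr).$$
Since $f$ and $h$ are smooth and $\gamma$ is continuous, the right-hand side is smooth (in particular locally Lipschitz) in $b$ and vanishes identically when $b = 0$. Hence $b \equiv 0$ is a solution, and by the Cauchy--Lipschitz uniqueness theorem any solution with $b(t_0) = 0$ must equal it on its entire maximal interval of existence. The tangency assertion is then immediate: $\dot{\gamma}(t) = a(t) V + \alpha Y \in \Span(V,Y)$ for every $t$.

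For completeness I would invoke the null condition. In the frame $(V, Y, X)$ the metric is given by the block with $g(V,Y) = 1$ and $g(X,X) = 1$, so the null condition reads $0 = g(\dot{\gamma}, \dot{\gamma}) = 2 a \alpha + b^2$. With $b \equiv 0$ and $\alpha \neq 0$ (the case $\alpha = 0$ having already been disposed of), this forces $a \equiv 0$, and hence $\dot{\gamma} = \alpha Y$ identically. Therefore $\gamma$ is an integral curve of the smooth vector field $\alpha Y$ on the compact manifold $M$, which is automatically complete, so $\gamma$ is defined on all of $\R$. As a consistency check, under the reduction $\mu = 0$ one has $\nabla_Y Y = \mu X = 0$, so the $Y$-orbits are genuinely null geodesics.

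There is no substantial obstacle here; the only conceptual point worth highlighting is that the argument exploits \emph{both} reductions at once. In the general $b$-equation there is a $b$-independent forcing term $\tfrac{\beta}{2} f + \alpha^2 \mu$; it vanishes identically precisely because we are simultaneously in the integrable reduction $\mu = 0$ and on a null geodesic $\beta = 0$. Drop either assumption and the right-hand side no longer vanishes at $b = 0$, so $b \equiv 0$ ceases to be a solution and the whole argument collapses.
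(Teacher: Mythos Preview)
Your proof is correct and follows exactly the reasoning the paper leaves implicit: the observation is stated without proof there, relying on the reader to see that the reduced $b$-equation has $b\equiv 0$ as a solution and to invoke uniqueness, together with the earlier remark that the $Y$-orbits are complete null geodesics under the reduction $\mu=0$. Your final paragraph, isolating why both hypotheses $\beta=0$ and $\mu=0$ are needed to kill the inhomogeneous term, is a useful addition not made explicit in the paper.
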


\subsubsection*{\textbf{Construction of an incomplete example}} In what follows, the vector fields $X^*$ and $Y^*$ are lifted to $\widetilde{\Sigma}=\R^2$, keeping the same notations. So consider on $\R^2$ the two following linearly independent vector fields, written in the $(x,y)$-coordinates of the plane:
\begin{align*}
    X^*&= \cos(x) \partial_x - \sin(x) \partial_y\\
    Y^*&= \sin(x) \partial_x + \cos(x) \partial_y
\end{align*}
By definition, they project to vector fields on a torus $\Sigma=\T^2$. We have $$[X^*,Y^*]=\partial_x=\cos(x) X^* + \sin(x) Y^*.$$ 

Consider the Lorentzian metric $(M, g, V)$ (with $V$ a null Killing field) on the trivial bundle $M=\Sigma \times \mathbb{S}^1$, associated to the triple $(\Sigma, X^*, Y^*)$. The two functions $f, h \in C^{\infty}(\Sigma, \R)$ are given by $f(x,y)= \sin(x)$ and $h(x,y)=\cos(x)$.  

The foliations determined by $X^*$ and $Y^*$ on $\widetilde{\Sigma}$ are illustrated in Figure 1. 
\begin{figure}[h!]
	\labellist 
	\small\hair 2pt 
 	\pinlabel {$\text{$\Gamma'(0)$}$} at 370 180
    \pinlabel {$\text{$0$}$} at 200 50
    \pinlabel {$\text{$\frac{\pi}{2}$}$} at 290 50
    \pinlabel {$\text{$\pi$}$} at 390 50
    \pinlabel {$\text{$x$}$} at 900 50
	\endlabellist 
	\centering 
	\includegraphics[scale=0.33]{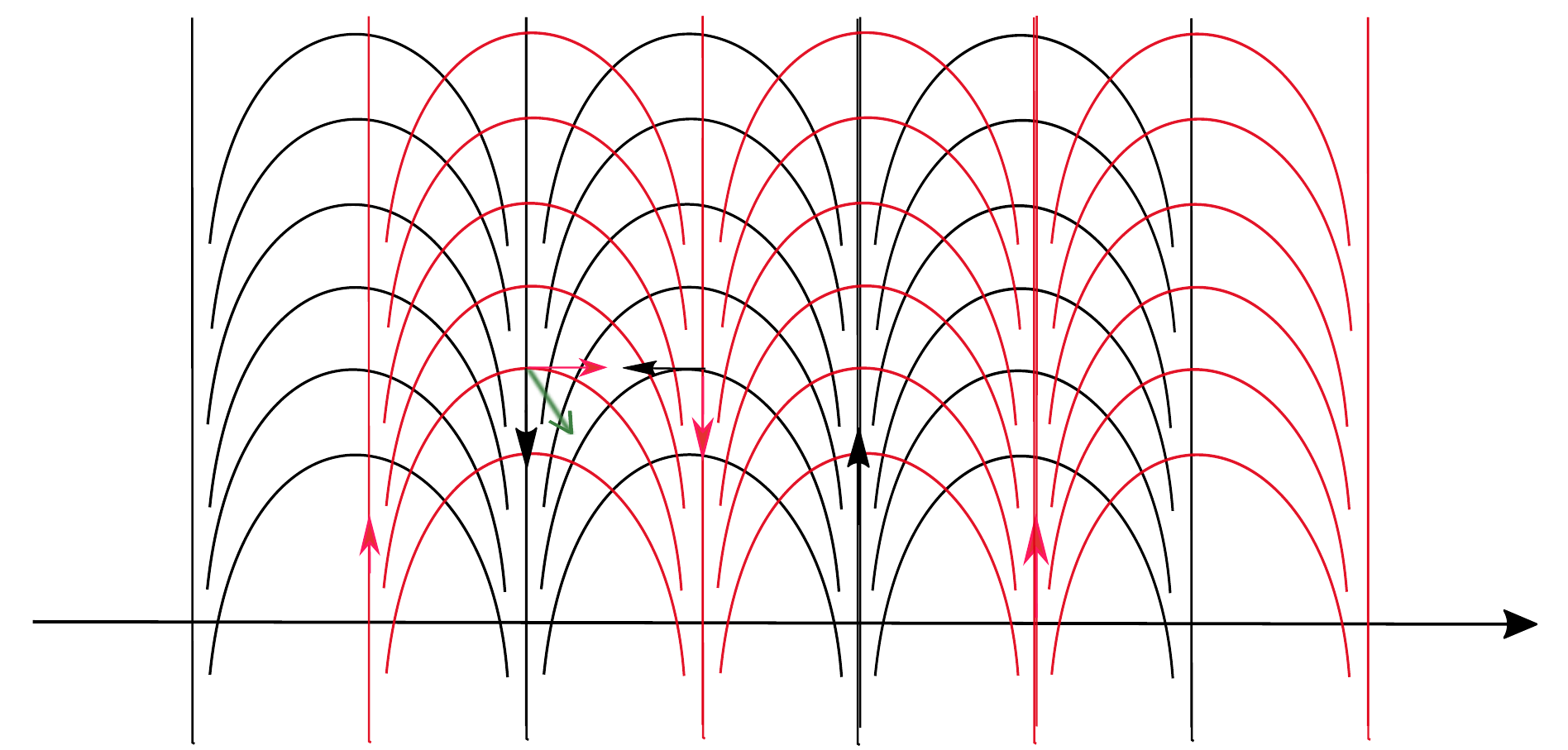} \caption{The foliation tangent to $X^*$ (resp. to $Y^*$) is represented in black (resp. red)}
\end{figure}

They are made of Reeb components. This choice will allow Lemma \ref{Lemma: geodesic behavior} below, which is the key tool to incompleteness phenomena obtained in Corollary \ref{Corollary: incomplete}. 

Let $\gamma$ be a null geodesic of $M$ defined on $I=[0,T[$, $T \in \overline{\R}$, such that $\alpha=1$, and $b(0) >0$. Denote by $\Gamma(t)=b(t) X^* + Y^*$  the projection of $\gamma$ to $\Sigma$ (lifted to $\R^2$). We introduce here some vocabulary we will use in the proof of Lemma \ref{Lemma: geodesic behavior}; it is introduced to facilitate proof writing.\\

\noindent\textbf{Vocabulary:}
\begin{itemize}
    \item We call a \textbf{band} of $X^*$ (resp. of $Y^*$) any single Reeb component of $X^*$ (resp. of $Y^*$), with its boundary leaves.
    \item We call \textbf{half a band} the intersection of a band of $X^*$ and a band of $Y^*$.
    \item We say that $\Gamma$ \textbf{crosses a band} when there exist two distinct $t_0, t_1 \in I$ such that $\Gamma(t_0)$ and $\Gamma(t_1)$ are on distinct boundary leaves of the band. The same can be said in the case of a half band.
    \item We say that $\Gamma$ \textbf{makes a turnaround} when there exist two distinct $t_0, t_1 \in I$ such that $\Gamma(t_0)$ and $\Gamma(t_1)$ belong to the boundary leaf of some band. 
\end{itemize}

\noindent The boundary leaves of the bands coincide with the $x$-axis and its $\frac{k\pi}{2}$-translates, for $k \in \Z$. 

\begin{lemma}\label{Lemma: geodesic behavior}
Let $\gamma$ be a null geodesic of $M$ defined on $I=[0,T[$, $T \in \R \cup \{+\infty\}$, such that $\alpha=1$, and $b(0) >0$. Then
\begin{enumerate}
    \item $\Gamma(t)$ does not cross any band. 
    \item $\Gamma(t)$ does not make turnarounds.
\end{enumerate}
As a consequence, for $t$ close enough to $T$, $\Gamma(t)$ remains in half a band. 
\end{lemma}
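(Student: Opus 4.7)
The heart of the proof is a simple observation about $\dot x(t) = b(t)\cos x + \sin x$ at the boundary leaves $L_k = \{x = k\pi/2\}$: the sign of $\dot x$ is forced, constant, and nonzero on each such leaf. Indeed, $\dot x = b$ at $x = 2k\pi$, $\dot x = -b$ at $x = (2k+1)\pi$, $\dot x = 1$ at $x = \pi/2 + 2k\pi$, and $\dot x = -1$ at $x = 3\pi/2 + 2k\pi$; combined with $b(0) > 0$ and Observation \ref{Observation: b(t_0)=0 implies b=0}, which forces $b(t) > 0$ throughout $I$, this fixes the direction in which $\Gamma$ may cross each boundary leaf.

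With this in hand, item $(2)$ is immediate: two hits of the same leaf $L_k$ would be crossings in the same forced direction, so to get from one to the other $\Gamma$ would have to cross $L_k$ once more in the opposite direction, which is impossible.

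For item $(1)$, I plan to go through the four types of bands and use the following dichotomy: in the $X^*$-bands $(\pi/2 + 2k\pi, 3\pi/2 + 2k\pi)$ and the $Y^*$-bands $(2k\pi, (2k+1)\pi)$ (the ``sink'' type), the forced directions on both boundaries point \emph{into} the band; in the other two band types (the ``source'' type) they point \emph{out of} the band. For a sink band, a direct look at the signs of $\dot x$ near either boundary from the inside shows that $\Gamma$ is pushed away from that boundary, so $\Gamma$ cannot reach either boundary from the interior and a fortiori cannot cross the band. For a source band, $\Gamma$ cannot enter from the outside---an entry would require a crossing in the forbidden direction---and if $\Gamma$ exits through one boundary it cannot return to exit through the other, again by the fixed direction together with item $(2)$.

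The consequence then follows combinatorially: since $\Gamma$ crosses each boundary leaf at most once and cannot cross any band, a short enumeration of the four half-band types shows that, after at most two leaf-crossings, $\Gamma$ enters one of the sink half-bands $(\pi/2 + 2k\pi, \pi + 2k\pi)$, whose two boundaries are both entry-only and from which there is no escape; alternatively $\Gamma$ may simply never leave its initial half-band. In either case, $\Gamma(t)$ lies in a single half-band for $t$ close enough to $T$. The only substantive step in the whole argument is the sign computation at the special leaves $L_k$; everything else is a short combinatorial case check, which is where I would concentrate attention to avoid bookkeeping slips.
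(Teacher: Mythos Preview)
Your proof is correct, but the route differs from the paper's. The paper argues item~(1) conceptually: since $\Gamma' = bX^* + \alpha Y^*$ with $\alpha=1$, the curve $\Gamma$ is everywhere transverse to the $X^*$-foliation, and a transversal arc cannot cross a Reeb component of that foliation (because the leaf-label function $y-\ln|\cos x|$ would have to be monotone along $\Gamma$ yet tend to $+\infty$ at both ends). The same works for $Y^*$ using $b\neq 0$. Item~(2) is then a one-line remark. Your approach, by contrast, never touches the Reeb topology: you reduce both items to the single observation that $\dot x$ has a forced nonzero sign on each leaf $L_k$, then run a sink/source case analysis. This is more elementary and entirely self-contained, at the cost of more bookkeeping; the paper's argument is slicker but leans on an unstated (if standard) fact about transversals in Reeb components. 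Your treatment of the final consequence is also more explicit than the paper's, which simply asserts it.

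One phrasing to tighten: for sink bands you write that ``the signs of $\dot x$ near either boundary from the inside show that $\Gamma$ is pushed away.'' That is not literally true --- away from the boundary the sign of $\dot x = b\cos x + \sin x$ is not forced, since $b$ can be large. The argument you actually need (and surely intend) is at the boundary itself: if $\Gamma$ reaches, say, the left boundary $x=\pi/2$ of a sink band from the interior at time $t_1$, then $x(t)>\pi/2$ just before $t_1$ forces $\dot x(t_1)\le 0$, contradicting the forced value $\dot x(t_1)=1$. Make that step explicit and the proof is airtight.
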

\begin{proof} 
\textbf{(1)} $\Gamma(t)$ does not cross any band. Indeed, if $\gamma$ crosses a band of $X$, then there is a point where $\gamma$ is tangent to $X$. But this means that $\alpha=0$. On the other hand, if $\gamma$ crosses a band of $Y$, then there is a point where $\gamma$ is tangent to $Y$. This means that $b(t_0)=0$ for some $t_0 \in I$. By Observation \ref{Observation: b(t_0)=0 implies b=0}, this yields $b$ identically zero. In both cases, this contradicts our assumptions on $\gamma$. 

\textbf{(2)} Indeed, a turnaround, when $\alpha \neq 0$ implies that $b$ vanishes.  
\end{proof}
\begin{corollary}\label{Corollary: incomplete}
Let $l$ be the (boundary) leaf of $X^*$ such that $x(l)=\frac{\pi}{2} + 2 k \pi, k \in \Z$. Let $\gamma$ be a null geodesic of $M$ such that $p:=\Gamma(0)\in l$, $\alpha=1$, and $b(0) >0$.  Then $\gamma$ is incomplete.
\end{corollary}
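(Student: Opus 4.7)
The plan is to find a monotone quantity along $\gamma$ satisfying a Riccati-type differential inequality that forces the frame coefficient $b(t)$ to diverge at finite parameter. Since $(V,X,Y)$ is a smooth frame on the compact manifold $M$, the blow-up of $b(t)$ implies that $|\dot{\gamma}(t)|_{g_{0}}$ diverges at finite parameter value for any auxiliary Riemannian metric $g_{0}$ on $M$, so $\gamma$ cannot be extended and is incomplete.

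First, I localize $\Gamma(t)$ in the open half band $\{\pi/2 < x < \pi\}$ of the universal cover $\R^{2}$, lifting so that $x(0) = \pi/2$. At any point of $l$ one has $X^{*} = -\partial_{y}$ and $Y^{*} = \partial_{x}$, so $\Gamma'(0) = b(0)X^{*} + Y^{*} = \partial_{x} - b(0)\,\partial_{y}$, which gives $\dot{x}(0) = 1 > 0$; together with Lemma~\ref{Lemma: geodesic behavior} this confines $\Gamma$ to the half band $[\pi/2,\pi]$. The strict lower bound $x(t) > \pi/2$ for $t > 0$ follows from the no-turnaround part of the lemma (since $l$ is a boundary leaf of $X^{*}$), and the strict upper bound $x(t) < \pi$ follows from the observation that $\dot{x} = b\cos(x) + \sin(x)$ equals $-b(t) < 0$ at $x = \pi$, which by continuity prevents a first hitting time there.

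I then introduce $J(t) := b(t)\sin(x(t))$. Using the geodesic system
\begin{equation*}
    \dot{x} \;=\; b\cos(x) + \sin(x), \qquad \dot{b} \;=\; \tfrac{1}{2}\sin(x)\,b^{2} - \cos(x)\,b,
\end{equation*}
a direct computation yields
\begin{equation*}
    \dot{J} \;=\; b^{2}\bigl(\cos^{2}(x) + \tfrac{1}{2}\sin^{2}(x)\bigr) \;=\; b^{2}\bigl(1 - \tfrac{1}{2}\sin^{2}(x)\bigr).
\end{equation*}
Since $\sin^{2}(x) \leq 1$, we have $b^{2} \geq J^{2}$ and $1 - \tfrac{1}{2}\sin^{2}(x) \geq \tfrac{1}{2}$, whence
\begin{equation*}
    \dot{J} \;\geq\; \tfrac{1}{2}\,b^{2} \;\geq\; \tfrac{1}{2}\,J^{2}.
\end{equation*}
Since $J(0) = b(0)\sin(\pi/2) = b(0) > 0$, Riccati comparison gives $J(t) \geq b(0)/(1 - b(0)\,t/2)$, which diverges at $t = 2/b(0)$. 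Hence the maximal existence time $T_{\max}$ of $\gamma$ is at most $2/b(0) < \infty$, and because $b(t) \geq J(t) \to \infty$ as $t \to T_{\max}^{-}$, the geodesic $\gamma$ is incomplete.

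The main obstacle is choosing the right auxiliary quantity: the two terms in $\dot{b} = \tfrac{1}{2}\sin(x)\,b^{2} - \cos(x)\,b$ degenerate at opposite ends of the half band (the quadratic term at $x = \pi$, the linear term at $x = \pi/2$), so neither alone is strong enough to force finite-time blow-up. The product $J = b\sin(x)$ precisely compensates both defects, and its derivative has no sign degeneracy on $[\pi/2,\pi]$, which is what makes the Riccati reduction work cleanly.
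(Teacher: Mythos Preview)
Your proof is correct, and the route is genuinely different from the paper's. The paper first confines $\Gamma$ to the half band $\frac{\pi}{2}<x<\pi$ (Step~1), and then spends a separate dynamical argument (Step~2) to show that $x(t)$ stays bounded away from $\pi$ by some margin $\delta>0$, so that $f(\Gamma(t))=\sin x(t)\geq\epsilon>0$; only then does it apply a Riccati comparison directly to $b$ via $\dot b\geq \tfrac{\epsilon}{2}b^{2}$. Your choice of the auxiliary quantity $J=b\sin x$ absorbs the degeneracy of $f$ at $x=\pi$ into the weight $\sin x$, and the identity $\dot J=b^{2}\bigl(1-\tfrac12\sin^{2}x\bigr)$ gives $\dot J\geq\tfrac12 J^{2}$ with no further localization needed; in fact, since $\dot J>0$ and $J(0)=b(0)>0$, the positivity of $J$ (hence of $\sin x$ and the bound $b\geq J$) is self-propagating, so your argument would survive even without the half-band confinement you carry out in the first paragraph. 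What your approach buys is a cleaner proof that avoids the paper's somewhat delicate Step~2 argument by contradiction, and it yields the explicit quantitative bound $T_{\max}\leq 2/b(0)$, which the paper's $\epsilon$-dependent comparison does not provide. The paper's approach, on the other hand, keeps the analysis on the original coefficient $b$ and makes the geometric picture (the curve being trapped in a definite sub-band) more visible.
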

\begin{proof}
By periodicity, we can fix $l$ to be the (boundary) leaf of $X$ such that $x(l)=\frac{\pi}{2}$. Let $l_1$ be the (boundary) leaf of $Y^*$ such that $x(l_1)=\pi$. In $\R^2$, we can write $\Gamma(t)=(x(t),y(t))$, so that   $\Gamma'(t)=x'(t)\partial_x + y'(t) \partial_y$. On the other hand, $\Gamma'(t)=b(t) X^* + Y^*$ (here we supposed $\alpha=1$). So, a straightforward computation yields 
\begin{align}\label{Eq-loc: x'(t) and y'(t)}
    x'(t)&=b(t)\cos x + \sin x
\end{align}

\noindent\textbf{Step 1:}  We first prove that $\Gamma$ remains in the half band delimited by $l$ and $l_1$, so that $x(t)$ is bounded on $\Gamma$. The geodesic can leave the half band either by crossing the band, or by making a turnaround. 
Looking at the frame field $(X^*,Y^*)$ on the boundary leaves (see Figure 1), it appears that the first behavior cannot occur. More precisely,
if $\Gamma$ crosses the half band, then there is a point $t_0$ such that $x(t_0)=\pi$ and $x'(t_0) \geq 0$. But Equation (\ref{Eq-loc: x'(t) and y'(t)}) yields  $b(t_0)\leq 0$, which is impossible. On the other hand, it follows from  Lemma \ref{Lemma: geodesic behavior} that $\Gamma$ does not make turnarounds. The claim follows.\\
Observe now that in the interior of the half band, we have $f(x,y)=\sin x>0$ and $h(x,y)=\cos x<0$. So for $t$ close enough to $0$ we get
\begin{align*}
    \dot{b}(t) &= \frac{1}{2}f(\Gamma(t)) b^2(t) - h(\Gamma(t)) b(t) \\
            &\geq  \frac{1}{2}f(\Gamma(t)) b^2(t),
\end{align*}
which is then actually true for any $t \in [0,T[$. In particular, $b(t)$ is an increasing function of $t$, and $\lim_{t \to T} b(t)=b_{\infty} \in \R_{>0} \cup \{+\infty\}$. 

\noindent\textbf{Step 2 ($f>0$ and bounded away from $0$ on $\Gamma$):} The way the frame field $(X,Y)$ varies in the half band forces the existence of some $\delta >0$ such that $\Gamma$ remains in the region $\frac{\pi}{2}<x<\pi - \delta$ for $t$ close enough to the limit $T$. Assume this is not the case. Then one can construct a sequence of points $t_n \to T$ such that $x(t_n)$ goes to $\pi$. For such a sequence, we have $\lim_{n}x'(t_n)=-b_{\infty} \in \R_{<0} \cup \{-\infty \}$. If $x'(t)$ was negative for $t$ close enough to the limit, meaning that $x(t)$ is decreasing, one cannot have $x(t_n)$ converging to $\pi$ (since $x(t)<\pi$ for any $t \in [0,T[$). It follows that there is a sequence $t'_n \to T$ such that $x'(t'_n)=0$. But this means that $b(t)$ goes to $0$, and this is a contradiction (remember that $b(t)$ is increasing and $b(0)>0$).  The claim follows. There is no loss of generality in assuming that the claim holds for any $t \in [0,T[$. Then $f(\Gamma(t)) > \epsilon$ for any $t \in [0,T[$, and some $\epsilon>0$. 

\noindent\textbf{Conclusion:} Any solution of the differential equation $\dot{\tilde{b}}(t)=\frac{\epsilon}{2} \tilde{b}^2(t)$, with initial condition $\tilde{b}(0) >0$, is incomplete, defined on a bounded interval $[0, T_0[$, where $T_0 < \infty$. For such a solution $\tilde{b}(t)$, we have $\lim_{t \to T_0} \tilde{b}(t)=\infty$, hence $\lim_{t \to T_0} \dot{\tilde{b}}(t)=\infty$. Since now $\dot{b}(t) \geq \dot{\tilde{b}}(t)$, incompleteness of $\Gamma$ with such an initial value $b(0)$ follows. 
\end{proof}
\begin{comment}
The general completeness question in dimension $3$ in the case of a periodic action reduces to Question $\ref{Question: Dim 3}$. This question deserves further investigation. It seems to us that the completeness depends on the dynamics of the vector fields $X^*$ and $Y^*$ defined on the torus (we hope to come back to it in a 
future work).     
\end{comment}

\subsubsection{\textbf{Case of $\dim G=2$}}\label{subsubsection: dim G=2}
  This case corresponds to a non-periodic null Killing field $V$ with no dense orbit. The fact that there is a $\T^2$-action means that there is another Killing vector field $K$. Since $K$ is preserved by $V$, the norm of $K$ is constant on a $\T^2$-leaf, but it may change from a leaf to another; in particular, the type of $K$ may change. So the leaves of the $\T^2$-action may be either all Lorentzian (when $K$ is timelike for example), all degenerate, or both cases happening on the same manifold. There are special situations where one obtains completeness rather easily, but the general case is largely open. 

\begin{proposition}\label{Proposition: completeness in dimG=2}
Let $(M,V)$ be a compact Lorentzian $3$-manifold with an equicontinuous null Killing field $V$, such that the closure of the flow of $V$ in $\Isom(M)$ is isomorphic to $\T^2$. Assume that the $\T^2$-action is free, and let $\F$ be the $2$-dimensional foliation defined by this action. Then
\begin{itemize}
    \item[(1)] If all the $\F$-leaves are degenerate, then the manifold is complete.
    \item[(2)] If all the $\F$-leaves are Lorentzian, then the manifold is complete. 
\end{itemize}
\end{proposition}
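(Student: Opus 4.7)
The plan is to exploit the free $\T^2$-action to construct adapted coordinates in which all metric coefficients depend only on the base variable $t\in M/\T^2\cong\mathbb{S}^1$, and then to reduce the geodesic equation to bounded first-order ODEs using the two Clairaut integrals and the speed constant. Since the $\T^2$-action is free and $M$ is compact and connected, the principal $\T^2$-bundle $M\to\mathbb{S}^1$ is trivial; fixing a basis $(K,V)$ of the abelian Lie algebra of $\T^2$ and passing to a suitable cover yields coordinates $(x,y,t)$ in which $K=\partial_x$, $V=\partial_y$, and $t$ parametrises the base. By $\T^2$-invariance every $g_{ij}$ depends only on $t$ and is bounded on the compact circle. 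Along any geodesic $\gamma$, the quantities $C_K:=g(\dot\gamma,K)$, $C_V:=g(\dot\gamma,V)$ and $\beta:=g(\dot\gamma,\dot\gamma)$ are constants of motion.

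For part $(1)$, degeneracy of every leaf forces the radical of the induced metric to be $\R V$, hence $g_{xy}=g(V,K)\equiv 0$ in addition to $g_{yy}\equiv 0$. Non-degeneracy of the ambient metric yields $\det g=-g_{xx}g_{yt}^{2}\neq 0$, so both $g_{xx}$ and $g_{yt}$ are nowhere vanishing. A gauge change $\partial_t\mapsto\partial_t-(g_{xt}/g_{xx})K$ kills $g_{xt}$. The Clairaut identities then read $C_K=g_{xx}(t)\dot x$ and $C_V=g_{yt}(t)\dot t$, giving $\dot x$ and $\dot t$ directly as bounded functions of $t$. When $C_V\neq 0$ the speed identity solves linearly for $\dot y$, also bounded; when $C_V=0$ the geodesic is tangent to $V^\perp=T\F$ and stays in a single leaf, hence is complete by Corollary~\ref{Cor: Killing complete}.

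For part $(2)$, the Lorentzian character of the leaves is equivalent to $g_{xy}=g(V,K)$ being nowhere zero. The gauge freedom $\partial_t\mapsto\partial_t+\mu(t)K+\lambda(t)V$ now kills both $g_{xt}$ and $g_{yt}$ simultaneously, and the Lorentzian signature forces $g_{tt}$ to be bounded away from zero. The Clairaut integrals give $\dot x=C_V/g_{xy}(t)$ and $\dot y=(C_K-g_{xx}(t)\dot x)/g_{xy}(t)$, both bounded, and substitution into the speed identity yields
\begin{equation*}
g_{tt}(t)\,\dot t^{\,2}=\beta+g_{xx}(t)\frac{C_V^{2}}{g_{xy}(t)^{2}}-2\frac{C_KC_V}{g_{xy}(t)},
\end{equation*}
a bounded function of $t$ alone. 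Hence $|\dot t|$ is uniformly bounded, and in both parts all velocity components stay bounded along the geodesic, so it extends for all affine parameter.

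The only delicate point is that $V$ need not be a closed generator of $\T^2$ (its orbits may be dense in $\T^2$); but choosing any basis of the abelian Lie algebra of $\T^2$ and working in a cover sidesteps this, and the coefficients $g_{ij}(t)$ and the Clairaut constants all descend well to the compact quotient. I expect this global bookkeeping to be the main technical nuisance; once it is in place each case reduces to a direct bounded computation.
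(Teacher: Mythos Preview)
Your argument is correct and takes a genuinely different route from the paper's. The paper handles the two cases conceptually: for~(1) it observes that the abelian $\T^2$-algebra of Killing fields spans the lightlike totally geodesic foliation $V^\perp$, invokes \cite{globke2016locally} to conclude that $M$ is a compact plane wave, and then cites the completeness of compact pp-waves; for~(2) it constructs an everywhere \emph{timelike} Killing field inside the $\T^2$-algebra (using a geometric argument about the null-cone apertures of the flat Lorentzian fibres and a transversal flow), and then applies the Romero--S\'anchez criterion. Your approach instead exploits the triviality of the principal $\T^2$-bundle over $\S^1$ to obtain global coordinates with $t$-dependent coefficients, and uses the two Clairaut integrals together with the speed constant to bound each velocity component directly. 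This is more elementary and self-contained: beyond Corollary~\ref{Cor: Killing complete} for the degenerate subcase $C_V=0$ in part~(1), you invoke no external structure or completeness theorems. The paper's approach, by contrast, identifies the geometry (plane wave in~(1), stationary in~(2)) and delegates completeness to known results; this is less explicit but situates the proposition within broader classification theorems and would adapt more readily to settings where hands-on coordinates are unavailable.

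Two minor points worth tightening. First, in part~(1) your sentence ``degeneracy forces the radical to be $\R V$, hence $g_{xy}\equiv 0$'' reverses cause and effect: degeneracy of the $2\times 2$ leaf metric is \emph{equivalent} to $g_{xy}=0$ (its determinant is $-g_{xy}^2$), and \emph{then} the radical is $\R V$ since $g_{xx}\neq 0$. Second, your gauge changes $\partial_t\mapsto\partial_t+\mu(t)K+\lambda(t)V$ correspond to shifting the section by antiderivatives of $\mu,\lambda$, which need not be periodic; so the new coordinates live only on the cover. This is harmless---the new metric coefficients remain periodic in $t$ (since $\mu,\lambda$ are), hence bounded, and bounded velocity on the cover gives completeness---but it is exactly the ``global bookkeeping'' you flagged, and deserves one explicit sentence.
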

\begin{proof}
(1) The group $\T^2$ provides an abelian subalgebra of Killing fields that generates the totally geodesic lightlike foliation tangent to $V^\perp$. By  \cite[Theorem 3]{globke2016locally}, we conclude that $M$ is a (compact) plane wave and, therefore, complete. 

(2) To prove completeness, we will show that there is a timelike Killing field generated by the action of $\T^2$.  This $\T^2$-action is free and proper, so the quotient map  $\pi: M\to M/\T^2 \simeq \S^1$ is a $\T^2$-principal bundle, whose fibers are given by the leaves of $\F$. Denote by $x$ the coordinate on the base space $\S^1$.
Since the $\T^2$-action is isometric, it preserves the unitary vector field $T$ tangent to the orthogonal distribution $T \mathcal{F}^\perp$. It follows that the $\T^2$-action commutes with the flow of $T$. Denote by $C_0$ (resp. $C_x$) the nullcone field of the Lorentzian metric on $F_0$ (resp. on $F_x$). Let $x \in \S^1$, the (non oriented) angle  $\alpha_x$ of the timecone delimited by $C_x$ is well-defined; indeed, since $\T^2$ is abelian, the Lorentzian metric on the fiber preserved by the $\T^2$-action is flat, so the nullcone field $C_x$ is parallel along the fiber. Moreover, since the parameter $x$ varies in a compact set, and the nullcone field varies continuously, $\alpha_x$ has a minimum $>0$. Now, to guarantee the existence of a timelike Killing field, we proceed as follows. For two leaves $F_0$ and $F_x$, $x \in \S^1$, there is a diffeomorphism $\phi^{t_x}_T: F_0 \to F_x$, where $\phi^t_T$ is the flow of $T$. The pullback of the nullcone field on $F_x$ by the flow of $T$ is another nullcone field on $F_0$, whose (pullbacked) angle is $\alpha_x$. Since $M$ has a null Killing field $V$ commuting with $T$, the nullcone field $C_0$ and the pullbacks of the $C_x$, for all $x \in \S^1$, have a common (invariant) null vector field $V_{\vert F_0}$.
This means that we can choose a timelike vector $\z$ tangent to $F_0$ at some point $p \in F_0$, with the property that all its push-forwards by the $T$-flow are also timelike. Since the (isometric) action of $\T^2$  is transitive on the fibers, there exists a Killing field $K$ such that $K(p)=\z$. Since $K$ is $T$-invariant and all  $d\phi_T^t(\z)$ are timelike, $K$ is timelike along the orbit of $T$ through $x$ (which intersects all the fibers). Finally, the $\T^2$-invariance of $K$ proves that it is timelike everywhere.
\end{proof}

\begin{remark}\label{remark 7.18}
    In the proof of Item $(2)$ of Proposition \ref{Proposition: completeness in dimG=2}, the flow of $T$ does not necessarily preserve the null cone fields of the $\F$-leaves. In fact, the latter are preserved by the flow of $T$ if and only if the flow induces a conformal diffeomorphism between the leaves. The foliation $\mathcal{F}^\perp$ is totally geodesic $($equivalently, $\mathcal{F}$ is transversely Riemannian$)$.  And since $\dim \mathcal{F}^\perp=1$,  $\mathcal{F}^\perp$ is transversely conformal if and and only if the flow of $T$ preserves the conformal structure of the leaves of $\F$. In this case, $\mathcal{F}$ is  umbilic by \cite[Fact 2.1]{Zeghibwarped}.
    When this occurs, by \cite[Proposition 2.4]{Zeghibwarped}, this leads to the warped product structure in Example \ref{Example: dimG=2, warped} below.   
\end{remark}

We mentioned in the proof of Proposition \ref{Proposition: completeness in dimG=2} that the metrics in Item (1) are plane waves.  Let us now look more closely to the situation in Item (2), and see which kind of metrics we can have there. 
\begin{example}[Warped product]\label{Example: dimG=2, warped}
    Consider $\R^3$ with the Lorentzian metric $g_f=f(z)(dx^2 -dy^2)+dz^2$, where $f>0$ and $\tau$-periodic. When $f$ is non-constant, $(\R^3, g_f)$ is non-flat. The connected component of the isometry group of $g_f$ contains $\Sol$, which acts by preserving the Lorentzian flat planes spanned by $\partial_x$ and $\partial_y$. Its maximal normal abelian subgroup acts by translations in the $x$ and $y$ coordinates. And it contains a one parameter subgroup acting by $A^t: (x,y,z) \mapsto (e^t x, e^{-t}y, z)$. Since $f$ is $\tau$-periodic, there is in addition an isometric action of $\Z$ by $(x,y,z) \mapsto (x,y,z+n\tau), n \in \Z$. Clearly, this action commutes with that of $\Sol$.
    Define a quotient $M_f:=\R^3/\Gamma$ by a discrete subgroup $\Gamma \subset \Sol \times \Z$ of the form $\Z^2\times \Z$, where $\Z^2$ is a lattice of the maximal normal abelian subgroup of $\Sol$. Then:\\
    1) The isometry group of $M$ is non-compact. Indeed, there is $\lambda >0$ for which $A^\lambda$ preserves the lattice $\Gamma$, hence induces a non-trivial action on the quotient. This leads to an isometric action of $\Z$ on $M$, coming from $A^\lambda$. \\
    2) By \cite[Lemma 2.1]{frances}, for a generic $f$, the connected component of the isometry group of $g_f$ is exactly $\Sol$.
    In this case, $M$ is a compact Lorentzian manifold with connected component of the isometry group isomorphic to $\T^2$. The isometry group contains the flow of a null Killing field, and one can choose $\Gamma$ in such a way that this flow is dense in $\T^2$, leading to the situation in Proposition \ref{Proposition: completeness in dimG=2}, Item (2). 
\end{example}

\begin{proposition}\label{Prop: warped 3D non-Brinkmann}
    Consider $X=\R^3$ equipped with a warped product metric $g_f=f(z)dxdy+dz^2$, where $f$ is periodic. Then $(X,g_f)$ admits a parallel null vector field $V$, i.e. the warped product is a Brinkmann manifold, if and only if $g_f$ is flat.
\end{proposition}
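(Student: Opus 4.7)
The plan is to reduce both sides of the equivalence to the single condition ``$f$ is constant.'' Since $f$ is periodic, if $f$ is constant (say $f\equiv c>0$), then $g_f = c\,dx\,dy + dz^2$, and the linear change of coordinates $u=(x+y)/2$, $v=(x-y)/2$ turns this into $c(du^2-dv^2)+dz^2$, a manifestly flat Lorentzian metric; moreover, $\partial_x$ is then null and parallel, so $(X,g_f)$ is Brinkmann. Thus it suffices to prove the nontrivial direction: if $(X,g_f)$ admits a nonzero parallel null vector field and $f$ is periodic, then $f$ must be constant.

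To carry this out, I would first compute the Christoffel symbols of $g_f$ in the chart $(x,y,z)$. A direct computation shows that the only nonvanishing symbols are
\[
\Gamma^x_{xz} = \Gamma^y_{yz} = \frac{f'(z)}{2f(z)}, \qquad \Gamma^z_{xy} = -\frac{f'(z)}{4}.
\]
Writing $V = V^x\partial_x + V^y\partial_y + V^z\partial_z$ and expanding $\partial_j V^i + \Gamma^i_{jk}V^k = 0$, the three equations coming from $\nabla_{\partial_z}V = 0$ integrate immediately to
\[
V^x = \frac{A(x,y)}{\sqrt{f(z)}}, \quad V^y = \frac{B(x,y)}{\sqrt{f(z)}}, \quad V^z = C(x,y),
\]
so all the $z$-dependence is explicit. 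Substituting this ansatz into the equations from $\nabla_{\partial_x}V = 0$ and $\nabla_{\partial_y}V = 0$ gives $B_x = A_y = 0$, together with identities of the form
\[
\frac{A_x(x)}{\sqrt{f(z)}} + \frac{f'(z)}{2f(z)}\, C(x,y) = 0, \qquad C_y(x,y) = \frac{f'(z)}{4\sqrt{f(z)}}\, A(x),
\]
and the two symmetric ones obtained by swapping the roles of $x$ and $y$.

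The main (and only) step is to run separation of variables in these identities. The function $f'/\sqrt{f} = 2(\sqrt{f})'$ is either identically zero (i.e.\ $f$ constant) or, if both sides of an identity are to be balanced with an $x$-independent left-hand side, must itself be a nonzero constant, which forces $\sqrt{f}$ to be linear in $z$ and hence $f$ to be a nonconstant quadratic polynomial. The periodicity of $f$ rules out the quadratic branch, so one concludes $A_x \equiv 0$ and $f'\cdot C \equiv 0$; since $f$ is non-constant we get $C\equiv 0$, and the symmetric argument gives $B_y\equiv 0$. The two remaining equations then reduce to $Af' \equiv Bf' \equiv 0$, forcing $A = B = 0$ and hence $V\equiv 0$, contradicting the assumption that $V$ is a nonzero parallel null field. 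The expected obstacle is exactly this separation-of-variables step, where periodicity of $f$ is essential to exclude the quadratic solutions of the relevant ODE.
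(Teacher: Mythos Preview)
Your argument is correct and takes a genuinely different route from the paper. You solve the parallel--transport system directly: after integrating the $\nabla_{\partial_z}$ equations to isolate the $z$--dependence, you run separation of variables on the remaining equations and observe that any nontrivial balance forces $(\sqrt{f})'$ to be constant, which periodicity excludes unless $f$ is constant. (Your write--up of this step is a bit loose --- the cleanest way is to note that in an identity like $C_y(x,y)=\tfrac12(\sqrt{f})'(z)\,A(x)$ the left side is $z$--independent, so if $A\not\equiv 0$ then $(\sqrt{f})'$ is constant; periodicity then forces $A\equiv 0$, and symmetrically $B\equiv 0$, whence $C$ is constant and equation (1) kills it.) The paper, by contrast, argues indirectly: it observes that $\Isom^{\mathsf o}(X)$ contains $\Sol$, invokes O'Neill's warped--product result to see that no Killing field coming from $\Sol$ can be parallel when $f$ is non--constant, deduces $\dim\Isom^{\mathsf o}(X)\geq 4$, and then appeals to Frances's classification (local homogeneity plus the structure of compact locally homogeneous Lorentzian $3$--manifolds with non--compact isometry group) to conclude flatness. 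Your approach is entirely self--contained and elementary; the paper's approach, while relying on heavier external input, situates the result within the broader structure theory that the surrounding section is developing. One small omission: you should state explicitly that the direction ``flat $\Rightarrow$ Brinkmann'' is immediate because $X=\R^3$ is simply connected, so a flat metric has trivial holonomy and any null vector extends to a parallel field.
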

    \begin{proof}
        First observe that $\Isom^{\mathsf{o}}(X)$ contains $\Sol$, which acts by preserving the flat Lorentzian leaves. Assume $X$ is a Brinkmann manifold. Let $V$ be the parallel null vector field of the  Brinkmann structure. Since the warped product is non-flat, the function $f$ is non-constant. Therefore, it follows from \cite[Chapter 7, Proposition 35]{o1983semi} that the Killing fields generated by one-parameter groups of $\Sol$ cannot be parallel. Hence, the one-parameter group generated by $V$  in $\Isom^{\mathsf{o}}(X)$ is not contained in $\Sol$. So the dimension of $\Isom^{\mathsf{o}}(X)$ is at least $4$. By \cite[Theorem 4.1]{frances}, $X$ must be locally homogeneous. On the other hand, since $f$ is periodic, $X$ admits a compact quotient $M=X/\Gamma$ with a non-compact isometry group (see Example \ref{Example: dimG=2, warped}). This compact quotient is locally homogeneous. By \cite[Theorem C point (2)]{frances}, it is flat, and then $g_f$ is also flat. 
    \end{proof}
As noted in Remark \ref{remark 7.18}, the decomposition coming from an isometric action of $\T^2$ with Lorentzian leaves is a warped product decomposition if and only if $\F^\perp$ is transversely conformal.  In the next example, however, $\F^\perp$ is not transversely conformal. Moreover, the metric is not isometric to a metric $g_f$ as in Example \ref{Example: dimG=2, warped}.

\begin{example}[Non-warped product]\label{Example: dim G=2, non-warped}
Consider $X:=\R^3$ with the Lorentzian metric $g=(dudv + \cos(t) du^2) + dt^2$. 
Let $V:=\partial_v, U:=\partial_u, T:=\partial_t$. Both $U$ and $V$ are Killing fields (with $V$ null) that generate an $\R^2$-isometric action with Lorentzian flat leaves. The null cone  field is spanned by $U$ and $U-\frac{1}{2}cos(t) V$. Since $[T, U-\frac{1}{2}\cos(t) V]=\frac{1}{2}\sin(t) V$, the line field $\R(U-cos(t) V)$ is not preserved by $T$. So the flow of $T$ does not preserve the null cone field, and the decomposition $(\R^2 \times \R, (dudv + \cos(t) du^2) + dt^2)$ is therefore not a warped product with respect to our decomposition. 
Observe that this example is a (non-flat) pp-wave, written in Brinkmann coordinates. So, by Proposition \ref{Prop: warped 3D non-Brinkmann}, it is not isometric to a warped product of the form $g_f$, independently of the decomposition. Moreover, $\R^2$ acts isometrically by translations in the $u$ and $v$ coordinates. Thus, taking the quotient of $X$ by a discrete group of isometries generated by a lattice in $\R^2$ along with the isometry $\gamma:(u,v,t) \mapsto (u,v,t+2\pi)$ results in a compact quotient where the connected component of the isometry group is precisely $\T^2$, with Lorentzian $\F$-leaves. \medskip
\end{example}

The authors do not know at this stage if there are examples in Item (2) where the universal cover is neither a warped product nor a Brinkmann spacetime. A broader question is: what compact Lorentzian $3$-manifolds arise when we assume the existence of an equicontinuous null Killing field?

\addtocontents{toc}{\protect\setcounter{tocdepth}{0}}

\addtocontents{toc}{\protect\setcounter{tocdepth}{1}}

\bibliographystyle{plain}
\bibliography{Bibliography.bib}
\end{document}